\newtheorem{theorem}{Theorem}[section]
\newtheorem{prop}[theorem]{Proposition}
\newtheorem{lemma}[theorem]{Lemma}
\newtheorem{Assumption}{Assumption}
\renewcommand{\theAssumption}{\Alph{Assumption}}
\newcommand{\settheoremtag}[1]{
	\let\oldtheAssumption\theAssumption
	\renewcommand{\theAssumption}{#1}
	\g@addto@macro\endAssumption{
		\global\let\theAssumption\oldtheAssumption}
}
\theoremstyle{remark}
\newtheorem{remark}[theorem]{Remark}
\numberwithin{equation}{section}
\newcommand{\R}{\mathbb{R}}
\newcommand{\N}{\mathbb{N}}
\newcommand{\F}{\mathbb{F}}
\renewcommand{\H}{\mathbb{H}}
\renewcommand{\P}{\mathbb{P}}
\newcommand{\calc}{{\mathcal C}}
\newcommand{\calh}{{\mathcal H}}
\newcommand{\calf}{{\mathcal F}}
\newcommand{\cala}{{\mathcal A}}
\newcommand{\calr}{{\mathcal R}}
\newcommand{\calg}{{\mathcal G}}
\newcommand{\calv}{{\mathcal V}}
\newcommand{\calz}{{\mathcal Z}}
\newcommand{\ov}{\bar}
\newcommand{\un}{\underline}
\newcommand{\bta}{{\boldsymbol{\eta}}}
\newcommand{\bzeta}{{\boldsymbol{\zeta}}}
\newcommand{\bW}{{\boldsymbol{W}}}
\newcommand{\btheta}{{\boldsymbol{\theta}}}
\newcommand{\bvartheta}{{\boldsymbol{\vartheta}}}
\newcommand{\limst}{\stackrel{st}{\longrightarrow}}
\def\RR{\mathbb{R}}
\def\mE{\mathbb{E}}
\def\E{\mathbb{E}}
\def\PP {\mathbb{P}}
\def\H{\mathcal H}
\newcommand{\cf}{{\mathcal F}}
\def\la{{\lambda}}
\def\si{\sigma}
\def\eps{{\varepsilon}}
\def\la{{\lambda}}
\def\Om{{\Omega}}
\def\al{{\alpha}}
\def\Ga{{\Gamma}}
\def\ga{{\gamma}}
\def\del{\delta_n}
 \newcommand{\vp}{\varphi}
 \newcommand{\ka}{\kappa}
\newcommand{\lc}{\left[}
\newcommand{\rc}{\right]}
\newcommand{\bone}{\boldsymbol{1}}
\newcommand{\limL}{\stackrel{L^1}{\Longrightarrow}}
\newcommand{\lec}{\lesssim}
\def\wt{\tilde}
\def\wh{\hat}
\begin{document}

\begin{frontmatter}
	\title{Statistical inference for rough volatility:\\ Central limit theorems}
	\runtitle{Statistical inference for rough volatility}
	
	\begin{aug}
		\author[A]{\fnms{Carsten H.}~\snm{Chong}\ead[label=e1]{carstenchong@ust.hk}},
	\author[B]{\fnms{Marc}~\snm{Hoffmann}\ead[label=e2]{hoffmann@ceremade.dauphine.fr}},
	\author[C]{\fnms{Yanghui}~\snm{Liu}\ead[label=e3]{yanghui.liu@baruch.cuny.edu}},
	\author[D]{\fnms{Mathieu}~\snm{Rosenbaum}\ead[label=e4]{mathieu.rosenbaum@polytechnique.edu}}
	\and
	\author[D]{\fnms{Grégoire}~\snm{Szymanski}\ead[label=e5]{gregoire.szymanski@polytechnique.edu}}
	\address[A]{Department of Information Systems, Business Statistics and Operations Management, The Hong Kong University of Science and Technology\printead[presep={,\ }]{e1}}	
	\address[B]{CEREMADE, Université Paris Dauphine-PSL\printead[presep={,\ }]{e2}}
	\address[C]{Department of Mathematics, Baruch College CUNY\printead[presep={,\ }]{e3}}
	\address[D]{CMAP, \'Ecole Polytechnique\printead[presep={,\ }]{e4,e5}}
	\end{aug}
	
	\begin{abstract}
In recent years, there has been a substantive interest in rough volatility models. In this class of models, the local behavior of stochastic volatility is much more irregular than semimartingales and resembles that of a fractional Brownian motion with Hurst parameter $H<0.5$. In this paper, we derive a consistent and asymptotically mixed normal estimator of $H$ based on high-frequency price observations.  In contrast to previous works, we work in a semiparametric setting and do not assume any a priori relationship between volatility estimators and true volatility. Furthermore, our estimator attains a rate of convergence that is known to be optimal in a minimax sense in parametric rough volatility models.
	\end{abstract}
	
	\begin{keyword}[class=MSC]
		\kwd[Primary ]{62G15, 62G20}
		\kwd{62M09}
		\kwd[; secondary ]{60F05, 62P20}
	\end{keyword}
	
	\begin{keyword}
		\kwd{Central limit theorem}
		\kwd{fractional Brownian motion}
		\kwd{Hurst parameter}
		\kwd{nonparametric estimation}
		\kwd{rough volatility}
		\kwd{spot volatility}
		\kwd{volatility of volatility}
	\end{keyword}
	
\end{frontmatter}

\section{Introduction}
For many years, continuous-time stochastic volatility models were predominantly based on stochastic differential equations driven by  Brownian motion or L\'evy processes. But more recently,   \cite{GJR} found empirical evidence that stochastic volatility is actually much rougher than semimartingales, in the sense that it locally resembles a fractional Brownian motion with   Hurst index $H< 0.5$, a statement that was further supported by other empirical work based on both return data \cite{bennedsen2016decoupling, FTW21, gatheral2020quadratic} and options data \cite{bayer2016pricing,F21,LMPR18}.

The data-driven approach of  \cite{GJR} to uncover rough volatility starts by considering high-frequency log-price data $\{x_{i\del}: i=0,\dots,[T/\del]\}$, where for example $\del = 5\,\text{min}$ and $T= 1\,\text{year}$. In a next step, daily spot variance estimates  are calculated from the formula
\begin{equation}\label{eq:RV} 
	\wh c_j = \frac{1}{k_n\delta_n} \sum_{i=1}^{k_n} (\delta^n_{(j-1)k_n+i} x)^2,\qquad j=1,\dots, [T/(k_n\del)], 
\end{equation} 
where $\delta^n_i x= x_{i\del}-x_{(i-1)\del}$ and $k_n=78 $ is the number of 5\,min increments during one trading day. 
Afterwards, realized power variations of $\log \wh c_j$, that is,
\[ m(q,\Delta)=\frac{1}{[T/\Delta]}\sum_{j=1}^{[T/\Delta]} \lvert \log \wh c_{j\Delta}-\log \wh c_{(j-1)\Delta}\rvert^q \]
are computed for different values of $q>0$ and $\Delta\in\{1\,\text{day}, 2\,\text{days}, \ldots\}$. If $\log \wh c_j$ were discrete observations of a continuous It\^o semimartingale, then one would expect that $m(q,\Delta)$ scales as $\Delta^{q/2}$, implying that the slope $\zeta_q$ in a regression of $\log m(q,\Delta)$ on $\log \Delta$ satisfies
\[ \zeta_q/q \approx \frac12. \]
However, for large set of high-frequency data, \cite{GJR} consistently found values of $\zeta_q/q < \frac12$, indicating that stochastic volatility locally behaves as a fractional Brownian motion with Hurst parameter $H<\frac12$. (To be  precise, \cite{GJR} actually computed the realized $q$-variations of daily \emph{realized} variance, that is, of $\log RV_j=k_n\delta_n \wh c_j$. But since $\log RV_{j\Delta}-\log RV_{(j-1)\Delta} = \log \wh c_{j\Delta}-\log \wh c_{(j-1)\Delta}$, this amounts to the same as computing $m(q,\Delta)$.)

As was pointed out by \cite{bennedsen2016decoupling, FTW21}, the above approach rests on the assumption that realized variances have the same scaling behavior as the true unobserved volatility. At the same time, it is well known (see e.g., \cite[Chapter 8]{AJ}) that in the absence of jumps and if volatility is a semimartingale, spot variance estimators of the type \eqref{eq:RV} converge to true spot variance plus a small  modulated white noise. In a first attempt to take estimation errors for spot variance into account, \cite{bennedsen2016decoupling, FTW21} assume that
\begin{equation}\label{eq:equal} 
	\log \wh c_j = \log \text{true spot variance}_j + \eps_j, 
\end{equation} 
where $\eps_j$ is a zero-mean iid sequence that is independent of everything else. Under assumption \eqref{eq:equal}, \cite{bennedsen2016decoupling, FTW21} derive consistent estimators of the roughness parameter $H$ in parametric rough volatility models and uphold the conclusion of \cite{GJR} that volatility is rough in a large set of financial time series. We also refer to \cite{bolko2020GMM}, where the authors assume \eqref{eq:equal} with slightly different assumptions on $(\log \wh c_j,\eps_j)$, and to \cite{Szymanski22}, where a central limit theorem (CLT) for $H$ is established under \eqref{eq:equal} (see also \cite{WANG2021}). 

This paper aims to substantially generalize the aforementioned results in two directions: first, we establish consistent  and asymptotically mixed normal estimators of $H$ in a semiparametric setting, where except for $H$ all other model ingredients are fully nonparametric; and second, we shall do so without assuming any relationship (such as \eqref{eq:equal}) between volatility proxies and true volatility. The rate of convergence of our best estimator is 
\begin{equation}\label{eq:opt} 
	\del^{-1/(4H+2)},
\end{equation} which as our companion paper \cite{Paris} shows is optimal in a minimax sense in parametric rough volatility models.


The remaining paper is structured as follows: in Section~\ref{sec:2}, after introducing the model assumptions, we state the main technical result of this paper, Theorem~\ref{thm:CLT-main}, a CLT for \emph{volatility of volatility (VoV)} estimators in a rough volatility framework.  Section~\ref{sec:4} discusses how we turn Theorem~\ref{thm:CLT-main} into  rate-optimal and feasible estimators of $H$. In addition to a usual application of the delta method, the rough volatility setting requires us overcome two distinct challenges: 
\begin{itemize}
	\item eliminating a nonnegligible asymptotic bias term for which we do not have a sufficiently fast estimator;
	\item constructing an optimal sequence $k_n$ for spot variance estimation that depends on the unknown parameter $H$ without losing a marginal bit of convergence rate.
\end{itemize}
Our final  estimator $H_n$ for $H$ is given in Equation~\eqref{eq:finalest}. As Theorems~\ref{thm:est} and \ref{thm:final} show, $H_n$ is a feasible and rate-optimal estimator of $H$ if $H\in(0,\frac12)$ and is equal to $\frac12$ with high probability if volatility is a continuous It\^o semimartingale. In Section~\ref{sec:sim}, we report the results of a short simulation study.  Section~\ref{sec:3} contains the main steps of the proof of Theorem~\ref{thm:CLT-main}, with certain technical details postponed to Appendices~\ref{sec:approx2}--\ref{sec:details}.

 In what follows, we write $A\lec B$ if there is a constant $C\in(0,\infty)$ that does not depend on any important parameter such that $A\leq CB$. Furthermore, if $A_n(t)$ and $B_n(t)$ are stochastic processes, we write $A_n\approx B_n$ if $\E[\sup_{t\in[0,T]} \lvert A_n(t)-B_n(t)\rvert] \to 0$ as $n\to\infty$.  For two sequences $a_{n}$ and $b_{n}$ we write $a_{n}\sim b_{n}$ if $a_{n}/b_{n}\to 1$ as $n\to\infty$.  If $x\in\R^n$, we denote its Euclidean norm by $\lvert x\rvert$. For any $\al\in \R$,  we write $x_+^\al=x^\al$ if $x>0$ and $x_+^\al =0$ otherwise. We also use the notation $\N=\{1,2,\dots\}$ and $\N_0=\{0,1,2,\dots\}$.

\section{Model and CLT for VoV estimators} \label{sec:2}
 
  On a filtered probability space $(\Om,\calf,\F=(\calf_t)_{t\geq0},\P)$ satisfying the usual conditions, we assume that the log-price $x$ of an asset is given by a continuous It\^o semimartingale of the form  
  \begin{equation}\label{eqn.s}
x_{t} = x_0+ \int_0^t b_{s}ds +\int_0^t\si_{s}dW_{s},\qquad t\geq0. 
\end{equation}
We assume that the squared volatility process $c=\si^2$ satisfies
\begin{equation} \label{eq:si} c_t = c_0 +\int_0^t a_s ds+   \int_0^t \wt g(t-s)\wt\eta_s ds + \int_0^t g(t-s)(\eta_s dW_s + \hat \eta_s d\hat W_s),  \end{equation}
where
\begin{equation}\label{eq:eta}\begin{split}
	\eta^2_t	&=\eta^2_0+\int_0^t a^\eta_s ds+ \int_0^t \wt g^\eta(t-s)\wt\theta_s ds+\int_0^t g^\eta(t-s)\btheta_s  d\bar \bW_s,\\
		\hat\eta^2_t	&=\hat\eta_0+\int_0^t a^{\hat \eta}_s ds+ \int_0^t \wt g^{\hat \eta}(t-s)\wt\vartheta_s ds+\int_0^t g^{\hat \eta}(t-s)\bvartheta_s d\bar \bW_s.
\end{split}\end{equation}
The ingredients of \eqref{eqn.s}--\eqref{eq:eta} are assumed to satisfy the following conditions.
\settheoremtag{CLT}
\begin{Assumption}\label{ass:CLT}
	Suppose that the log-price process $x$ is given by \eqref{eqn.s} with the following specifications:
	\begin{enumerate}
		\item There is $H\in(0,\frac12]$ such that the squared volatility process $c_t=\si_t^2$ satisfies \eqref{eq:si} with  $\eta$ and $\wh \eta$ given by \eqref{eq:eta}. The variables $x_0$, $c_0$, $\eta_0^2$ and $\wh \eta_0^2$ are $\calf_0$-measurable.
		\item The processes $a$, $b$, $a^\eta$ and $a^{\hat \eta}$ (resp., $\btheta$ and $\bvartheta$) are adapted and locally bounded real-valued (resp., $\R^{1\times 4}$-dimensional) processes. Moreover, for all $T>0$, we assume that 
		\begin{equation}\label{eq:b} 
			\lim_{h\to0}	\sup_{s,t\in[0,T], \lvert s-t\rvert\leq h} \bigl\{ \E[1\wedge \lvert b_t-b_s\rvert]+\E[1\wedge \lvert a_t-a_s\rvert]\bigr\} = 0.
		\end{equation}
		\item The processes $\wt \eta$, $\wt \theta$ and $\wt\vartheta$ are adapted, locally bounded and for all $T>0$, there is $K_T\in(0,\infty)$ such that  
		\begin{equation}\label{eq:Holder} 
			\sup_{s,t\in[0,T]}\bigl\{\E[1\wedge \lvert \wt\eta_t-\wt\eta_s\rvert]+\E[1\wedge \lvert \wt\theta_t-\wt\theta_s\rvert]+\E[1\wedge \lvert \wt\vartheta_t-\wt\vartheta_s\rvert]\bigr\}\leq K_T\lvert t-s\rvert^H.
		\end{equation}
		\item The processes $W$ and $\hat W$ are independent standard $\F$-Brownian motions and $\bar \bW$ is a four-dimensional $\F$-Brownian motion that is jointly Gaussian with $(W,\hat W)$. The components of $\bar \bW$ may depend on each other and on $(W,\hat W)$.
		\item We have
	\begin{equation}\label{eq:g} 
			\begin{aligned}
			g(t)	&=g_H(t)+g_0(t), &g^\eta(t)	&=g_{H_\eta}(t)+g^\eta_0(t), &g^{\hat \eta}(t)	&=g_{H_{\hat \eta}}(t)+g^{\hat \eta}_0(t),\\
			\wt g(t)	&=g_{\wt H}(t)+ \wt g_0(t), &\wt g^\eta(t)	&=g_{\wt H_\eta}(t)+\wt g^\eta_0(t),&
			\wt g^{\hat \eta}(t)	&=g_{\wt H_{\hat \eta}}(t)+\wt g^{\hat \eta}_0(t),
		\end{aligned}
	\end{equation} 
		where
		\begin{equation}\label{eq:gH} 
			g_H(t)=K_H^{-1} t_+^{H-1/2},\qquad K_H=\frac{\Ga(H+\frac12)}{\sqrt{\sin(\pi H)\Ga(2H+1)}},
		\end{equation} 
		and  $H_\eta,H_{\hat \eta}\in(0,\frac12]$,  $\wt H,\wt H_\eta,\wt H_{\hat \eta}\in [ H,\frac12]$
		and $g_0,g^\eta_0,g^{\hat \eta}_0, \wt g_0, \wt g^\eta_0, \wt g^{\hat \eta}_0 \in C^1([0,\infty))$ are functions vanishing at $t=0$. 
	\end{enumerate}
\end{Assumption}

Let us comment on the conditions imposed in Assumption~\ref{ass:CLT}. Except for the parameter $H$, the assumptions on  $x$, $c$, $\eta$ and $\wh \eta$ are fully nonparametric and designed in such a way that it contains the rough Heston model \cite{el2019roughening, ER19} as an example, which is a particular important one as it is founded in the microstructure of financial markets \cite{el2018microstructural,jusselin2020noarbitrage}. Note that we allow $c$, $\eta$ and $\wh\eta$ to have   both a usual (differentiable) and a rough (non-differentiable) drift. Moreover, by considering $W$, $\wh W$ and $\bar\bW$, we allow for the most general dependence between the Brownian motions driving $x$, $c$, $\eta$, $\wh \eta$. Also note that $H_\eta$ and $H_{\wh \eta}$ are not coupled with $H$, so the VoV processes $\eta$ and $\wh\eta$ can be much rougher than the volatility process $c$ itself.

The structural assumptions in  \eqref{eq:eta} for $\eta^{2}$ and $\hat{\eta}^{2}$ are important and cannot be replaced by just merely assuming that $\eta^2$ and $\hat{\eta}^2$ are $H$-H\"older regular (see the paragraph after \eqref{eq:Q-diff} for the reason). At the same time,  \eqref{eq:eta} is a mild assumption: it is not only satisfied by the rough Heston model but by any model in which VoV is assumed to be continuous and stationary, thanks to the Wold--Karhunen representation theorem. (Due to the coefficients $\btheta_s$ and $\bvartheta_s$, the processes $\eta^2$ and $\hat{\eta}^2$ do \emph{not} have to be stationary.) Also, in the estimation of VoV in a semimartingale context,  \eqref{eq:eta} with $H_\eta=H_{\hat \eta}=\frac12$ is typically assumed; see \cite{LLZ22, Vetter15}.

\begin{remark}[Rough volatility vs.\ long memory vs.\ jumps]~
\begin{enumerate}
\item There is long-standing debate in the literature whether volatility has long memory or not (see, e.g.,  \cite{ABDL03,CR98,Corsi09,Diebold01}).
Because we include various $g_0$-functions in \eqref{eq:g}, the kernels in \eqref{eq:si} and \eqref{eq:eta} are only specified around $t=0$. In particular, $H$, $H_\eta$ and $H_{\wh\eta}$ are parameters of roughness and are not related to long-range dependence / long-memory / persistence. In particular, in our model, volatility can be rough and have long memory at the same time, which is important as \cite{bennedsen2016decoupling, SLY21, SY22} point out.
\item In some parts of the previous literature, the notion of ``roughness'' is used as a synonym for jumps or discontinuities (see, e.g., \cite{ABD07,BOLLERSLEV2016464}). Since our model neither includes price nor volatility jumps, one may ask whether rough volatility can be explained through jumps, which are a well-documented feature of high-frequency price series \cite{AitSahalia09}. In this respect, we first note that   \cite{chong2022short} shows that rough volatility (in the sense that $H<\frac12$ in \eqref{eq:si}) can be statistically distinguished from both price and volatility jumps. In order to make the estimators developed in this paper robust to jumps, a natural idea is to include truncation in \eqref{eqn.chat.dx} and \eqref{eq:Vtilde} below. We leave it to future work to study the details of such an extension.
\item While    this paper focuses  on   the rough case $H\leq 1/2$, we conjecture that the results of this paper can be extended to $H<\frac34$ without major obstacles. Since noncentral limit theorems are known to appear for variation statistics of directly observable processes if $H\geq \frac34$ (see, e.g., \cite{NNT10}), we do not know whether our results extend to $H\geq \frac34$. 
\end{enumerate}
\end{remark}

If $c$ was directly observable, a classical way to feasibly estimate $H$ would be to prove a joint CLT for \emph{realized autocovariances} $\del^{1-2H}\sum_{i=1}^{[T/\del]-\ell} \delta^n_i c\, \delta^{n}_{i+\ell}c$ with different values of $\ell\in\N_0$ and then to obtain an estimator of $H$ from the ratio of two such functionals; see \cite{BN11, CDL22, CDM22, CHPP13, GH07, IL97, LT20}. Since we do not observe $c$, we first consider spot variance estimators 
\begin{eqnarray}\label{eqn.chat.dx}
\hat c^n_{t,s}=\frac{1}{k_n\del}\hat{C}^{n}_{t, s} ,\quad	\hat{C}^{n}_{t, s} = \sum_{i=[ t/\delta_{n}]}^{[ (t+s)/\delta_{n} ]-1}
	(\delta^{n}_{i} x)^{2}, \quad 
	\delta^{n}_{i} x = x_{i\delta_{n}} - x_{(i-1)\delta_{n}} ,
\end{eqnarray}
where $k_n \in\N$ and $k_n\sim \theta\delta_n^{-\kappa}$ for some $\kappa,\theta>0$. Then we
form realized autocovariances of these spot variance estimators by defining
\begin{equation}\label{eq:Vtilde} 
	\begin{split}
		\tilde{V}^{n,\ell,k_n}_{t}  &= (k_n\delta_n)^{1-2H} \frac{1}{k_{n}} \sum_{i=1}^{[ t/\delta_{n} ]-(\ell+2)k_n+1 } \bigl(   \hat{c}^{n}_{(i+k_n)  \delta_{n},k_{n}\delta_{n}}   -      \hat{c}^{n}_{i \delta_{n}, k_{n}\delta_{n}}  \bigr)\\
		&\quad\times\bigl(  \hat{c}^{n}_{(i+(\ell+1)k_n)  \delta_{n},k_{n}\delta_{n}}   -      \hat{c}^{n}_{(i+\ell k_n) \delta_{n}, k_{n}\delta_{n}}  \bigr)
	\end{split}
\end{equation}
for $\ell\geq0$.
Note that we write $[x]$ and $\{x\}$ for the integer and fractional part of  $x$, respectively. The normalization in the last line is chosen in such a way that $\wt V^{n,\ell,k_n}_t$ converges in probability. In the semimartingale context (with $H=\frac12$ and $\ell=0)$, the functional $\wt V^{n,0,k_n}_t$ was used in \cite{Vetter15} (see also \cite{Gloter00, LLZ22}) to estimate the integrated VoV process $\int_0^t (\eta_s^2+\wh\eta_s^2) ds$ (to be very precise, this is actually the integrated \emph{variance of variance}). Still in the semimartingale framework, functionals similar to \eqref{eq:Vtilde} have also been investigated in the literature to estimate the leverage effect; see \cite{AFLWY17,AFL13,BR12,KX17,Vetter12,WM14}. 

 To state a CLT for $\wt V^{n,\ell,k_n}$ for $H<\frac12$, we have to introduce some additional notation: for $n\in\N$,  $h>0$ and a function $f:\R\to\R$, we define the forward and central difference operators by $$\Delta^n_h f(t)=\sum_{i=0}^n (-1)^{n-i}\binom{n}{i}f(t+ih),\qquad \delta^n_h f(t)= \sum_{i=0}^n(-1)^i \binom{n}{i} f(x+ (\tfrac n2-i) h),$$
 respectively. For $n=1$, we simply write $\Delta_h f(t)=\Delta^1_h f(t)=f(t+h)-f(t)$ and $\delta_h f(t)=\delta^1_h f(t)=f(t+\frac h2)-f(t-\frac h2)$. Moreover, given $\al\in\R$, we use the shorthand notation $\Delta^n_h t^\al_+$ or $\Delta^n_h \lvert t\rvert^\al$ for $\Delta^n_h f(t)$ where   $f(t)=t_+^\al$ or $f(t)=\lvert t\rvert^\al$ ($\delta^n_h t^\al_+$ and $\delta^n_h \lvert t\rvert^\al$  are used similarly). Finally, for any $d\in\N$, we use $\stackrel{st}{\Longrightarrow}$ to denote functional stable convergence in law in the space of c\`adl\`ag functions $[0,\infty)\to\R^d$ equipped with the local uniform topology. The following CLT is the main technical result of this paper.

\begin{theorem}\label{thm:CLT-main}
Let $d\in\N$ and $\ell_1,\dots,\ell_d\geq2$ be integers. Furthermore, consider deterministic integer sequences $(k_n^{(1)})_{n\in\N}, \dots, (k_n^{(d)})_{n\in\N}$ such that for some $\kappa\in[\frac{2H}{2H+1},\frac12]$ and $\theta_1,\dots,\theta_d\in(0,\infty)$ we have $k_n^{(j)}\sim \theta_j\del^{-\kappa}$ for all $j=1,\dots,d$. For each $j=1,\dots, d$, let
\begin{equation}\label{eq:calz} 
	\calz^{n,j}_t = \del^{-(1-\kappa)/2}(\wt V^{n,\ell_j, k_n^{(j)}}_t - V^{\ell_j}_t - \cala^{n,\ell_j,k_n^{(j)}}_t),
\end{equation}
where for $\ell\geq2$, we define
\begin{equation}\label{eq:Vt} 
	V^\ell_t=	\Phi^H_\ell\int_0^t (\eta_s^2+\wh\eta_s^2) ds
\end{equation}
with
	\begin{equation}\label{eq:Phi} \begin{split}
	\Phi^H_\ell &=\frac{\delta_1^4 \lvert \ell\rvert^{2H+2}}{2(2H+1)(2H+2)}\\
	&=\frac{(\ell+2)^{2H+2}-4(\ell+1)^{2H+2}+6\ell^{2H+2} -4( \ell-1)^{2H+2} + (\ell-2)^{2H+2}}{2(2H+1)(2H+2)}\end{split}	\end{equation}
and for a general integer sequence $k_n$,
	\begin{equation}\label{eq:bias}\begin{split}
		\cala^{n,\ell,k_n}_t	& =-\frac{2K_H^{-1}}{H+\frac12}(k_{n}\delta_{n})^{-1/2-H} 
		\int_{0}^t \frac{1}{k_{n}}\sum_{i=0}^{k_n-1} \Delta^3_1 (\ell-1-\tfrac{i+\{u/\del\}}{k_n})_+^{H+1/2}\\
		&\quad\times\int_{[u/\del]\del}^u \si_v dW_v(\si_{u}	\eta_{u} -\si_{[u/\del]\del}\eta_{[u/\del]\del})du.
\end{split}\end{equation}
Under Assumption~\ref{ass:CLT}, the process $\calz^n_t=(\calz^{n,1}_t,\dots,\calz^{n,d}_t)^T$ satisfies the joint CLT
\begin{equation}\label{eq:CLT} 
\calz^n \stackrel{st}{\Longrightarrow} \calz,
\end{equation}
where $\calz=((\calz^1_t,\dots, \calz^d_t)^T)_{t\geq0}$ is a continuous $\R^d$-valued process that is defined on a very good filtered extension $(\ov \Om, \ov\calf,\ov\F=(\ov\calf_t)_{t\geq0},\ov\P)$ of the original probability space (see e.g. \cite[Chapter 2.1.4]{JP}) and conditionally on $\calf$ is a centered Gaussian process with independent increments and $\calf$-conditional covariance function
\begin{equation}\label{eq:var} 
\calc_t^{jj'}=	\ov\E[\calz^{j}_t\calz^{j'}_t\mid \calf]= \sum_{\nu=1}^3 \ga_\nu^{\ell_j,\theta_j,\ell_{j'},\theta_{j'}}(H)  \Ga_\nu(t).
\end{equation}
In the last line, 
\begin{equation}\label{eq:terms} 
	\Ga_1(t)= \int_0^t \si_s^8 ds,\quad \Ga_2(t)= \int_0^t (\eta^2_s+\wh\eta^2_s)^2 ds,\quad \Ga_3(t)=\int_0^t \si_s^4 (\eta^2_s+\wh\eta^2_s) ds
\end{equation}  
and for arbitrary $\ell,\ell'\geq2$ and $\theta,\theta'\in(0,\infty)$,
\begin{equation}\label{eq:ga}\begin{split}
	\ga_1^{\ell,\theta,\ell',\theta'}(H)	&=    \frac{\delta_{\theta}^{4}\delta_{  {\theta'}}^{4}|
    \ell \theta- {\ell'}  {\theta'}    |^{3}}{3(\theta  {\theta'})^{    2H+2}}    
    \mathbf{1}\biggl\{\ka =\frac{2H}{2H+1}  \biggr\}
    , \\
	\ga_2^{\ell,\theta,\ell',\theta'}(H)	&= \begin{multlined}[t][0.725\textwidth]
		\frac{\Ga(1+2H)^2(1-1/\cos(2\pi H))}{4\Ga(6+4H)(\theta\theta')^{2H+2}}\\
		\times\delta^4_{\theta}\delta^4_{\theta'}\bigl[\lvert \ell\theta-\ell'\theta'\rvert^{4H+5}+\lvert \ell\theta+\ell'\theta'\rvert^{4H+5}\bigr],\end{multlined}\\
	\ga_3^{\ell,\theta,\ell',\theta'}(H)&=- \frac{\delta^4_{\theta}\delta^4_{\theta'} [\lvert \ell\theta+\ell'\theta'\rvert^{2H+4}+ \lvert \ell\theta-\ell'\theta' \rvert^{2H+4}]}{8(H+\frac12)(H+1)(H+\frac32)(H+2)(\theta\theta')^{2H+2}}\mathbf{1}\biggl\{\ka =\frac{2H}{2H+1}  \biggr\}.
\end{split}\end{equation}
If $H=\frac14$, $\ga_2^{\ell,\theta,\ell',\theta'}(H)$ is defined via continuous extension by 
\begin{equation}
	\ga_2^{\ell,\theta,\ell',\theta'}(\tfrac14)=	\frac{\delta^4_{\theta}\delta^4_{\theta'}\bigl[\lvert\ell\theta- \ell'\theta'\rvert^{6}\log\lvert\ell\theta- \ell'\theta'\rvert+\lvert  \ell\theta+\ell'\theta'\rvert^{6}\log\lvert \ell\theta+\ell'\theta'\rvert\bigr]}{5760(\theta'\theta')^{5/2}}.
\end{equation}
\end{theorem}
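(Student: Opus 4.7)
The plan is to decompose the spot volatility estimator $\hat c^n_{t,k_n\del}$ into the true squared volatility $c_t$ plus an estimation error, then expand the product of increments inside \eqref{eq:Vtilde} and track which combinations contribute to the deterministic limit $V^\ell_t$, which contribute to the bias $\cala^{n,\ell,k_n}_t$, which contribute to the CLT variance, and which are negligible. After a suitable localization argument (which allows us to assume $a$, $b$, $\eta$, $\hat\eta$, $\wt\eta$, etc.\ are bounded), by stationarity/linearity I can reduce to a single $t$. I would first treat the case $\wt g\equiv \wt g^\eta\equiv \wt g^{\hat\eta}\equiv 0$ and $g_0\equiv g^\eta_0\equiv\cdots\equiv 0$, i.e., pure fractional kernels with no smooth drift components; once the CLT is obtained in this clean case, the effects of the smoother $\wt H$-parts and of the $C^1$ remainders can be absorbed into negligible remainders because $\wt H,\wt H_\eta,\wt H_{\hat\eta}\in[H,\tfrac12]$ and thus contribute smoother fluctuations than the $g_H$ kernel.

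\textbf{Step 1: Decomposition of $\hat c^n$.} Using It\^o's formula, write
\[(\delta^n_i x)^2 = \int_{(i-1)\del}^{i\del} c_s\,ds + 2\int_{(i-1)\del}^{i\del}(x_s-x_{(i-1)\del})dx_s,\]
so that $\hat c^n_{t,k_n\del} = \frac{1}{k_n\del}\int_t^{t+k_n\del} c_s\,ds + \calm^n_t$, where $\calm^n_t$ is a martingale piece of order $(k_n\del)^{-1/2}\si_t^2 = \del^{(1-\kappa)/2}\si_t^2$. Substituting this decomposition into \eqref{eq:Vtilde} expands $\wt V^{n,\ell,k_n}_t$ into four groups of terms: (i) signal$\times$signal, involving only $c$-increments; (ii) signal$\times$noise and (iii) noise$\times$signal, involving one $\calm^n$-factor; and (iv) noise$\times$noise, involving two $\calm^n$-factors. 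Type (iv) has order $\del^{1-2H}(k_n\del)^{-1}\cdot k_n^{-1}\cdot [T/\del] = \del^{-2H+\kappa}$-type bookkeeping that matches the CLT rate $\del^{(1-\kappa)/2}$ exactly when $\kappa=2H/(2H+1)$, explaining the indicator in $\ga_1^{\ell,\theta,\ell',\theta'}(H)$.

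\textbf{Step 2: Signal$\times$signal contribution.} Using the Volterra representation $c_t-c_0 \approx \int_0^t g_H(t-s)(\eta_s dW_s+\hat\eta_s d\hat W_s)$ and freezing $\eta_s,\hat\eta_s$ on the scale $k_n\del$ (justified by \eqref{eq:Holder}), I compute the $L^1$-limit of the signal-signal sum. The second-order difference $\hat c^n_{(i+k_n)\del}-\hat c^n_{i\del}$ of the averaged $c$ becomes, in the limit, a second-order fBm increment of scale $k_n\del$, and pairing two such increments with lag $\ell k_n$ produces exactly the fourth-order difference $\delta_1^4|\ell|^{2H+2}$ in \eqref{eq:Phi} via the identity $\mathrm{Cov}(B^H_{t+a}-B^H_t,B^H_{t+\ell a+a}-B^H_{t+\ell a})=\frac{a^{2H}}{2}\delta_1^2|\ell|^{2H+2}$ applied twice. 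The normalization $(k_n\del)^{1-2H}$ and an extra $\frac{1}{k_n}$ from averaging then gives the coefficient in \eqref{eq:Phi}. Fluctuations around this deterministic limit provide the $\Ga_2$ part of the CLT, with the exact constants $\ga_2^{\ell,\theta,\ell',\theta'}(H)$ coming from a direct covariance computation on Gaussian functionals of $(W,\hat W)$ using the Isserlis/Wick formula; the log-correction at $H=\tfrac14$ reflects the boundary of integrability of a product of fBm covariance kernels.

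\textbf{Step 3: Cross terms and bias.} The signal$\times$noise terms contain the interaction between the Brownian increments of $x$ (inside $\calm^n$) and the Brownian increments driving $c$ (inside the signal). The $\eta\,dW$ part of $c$ is correlated with $\si\,dW$ of $x$, producing a non-vanishing contribution that I carefully isolate. The piece of size $\del^{H+1/2-\kappa/2}$ that cannot be absorbed into the CLT fluctuation is recognized — via Riemann sum approximation of the kernel $K_H^{-1}(t-s)^{H-1/2}_+$ and a third-order difference arising from summing three overlapping increments — as precisely $\cala^{n,\ell,k_n}_t$ in \eqref{eq:bias}. Subtracting it leaves a genuine martingale difference fluctuation whose conditional variance computation yields $\Ga_3$ with coefficient $\ga_3^{\ell,\theta,\ell',\theta'}(H)$, which is active only at the critical rate (hence the indicator).

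\textbf{Step 4: Stable CLT for the fluctuation.} After subtracting $V^\ell_t$ and $\cala^{n,\ell,k_n}_t$, I write the rescaled remainder $\calz^{n,j}_t$ as a sum of triangular martingale differences adapted to a sequence of block-filtrations (one block per time-window of length $k_n\del$), plus a negligible drift. I then apply Jacod's stable CLT for triangular arrays of martingale differences conditional on $\calf$ (\cite[Thm.~IX.7.28]{JP} in spirit): verify the $L^1$-convergence of the predictable quadratic covariation to $\calc^{jj'}_t$ by computing — using Gaussian moment formulas — the four-, six- and eight-fold covariances of increments of $W,\hat W,\bar\bW$, then check the Lindeberg condition (trivial via fourth-moment bounds) and the conditional-orthogonality condition with respect to $W,\hat W,\bar\bW$ (which follows because the leading terms are even functionals of bridge-type increments). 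Joint convergence over $j=1,\dots,d$ is automatic since the covariance formulas \eqref{eq:ga} arise from the same computation with general $(\ell,\theta,\ell',\theta')$; the structure $|\ell\theta\pm\ell'\theta'|$ reflects the two ways sampling windows of different scales can overlap.

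\textbf{Main obstacle.} The hardest part is Step~3: identifying the exact leverage bias $\cala^{n,\ell,k_n}_t$. The term lies at the interface between bias and fluctuation, and its magnitude depends subtly on $\kappa$ versus $\tfrac{2H}{2H+1}$. Extracting $\cala^{n,\ell,k_n}_t$ in the explicit form \eqref{eq:bias} requires a careful triple expansion of the fractional kernel $(t-s)^{H-1/2}_+$ around $s=[u/\del]\del$ and a delicate interchange of sums and stochastic integrals, with error bounds that must use the $H$-H\"older regularity in \eqref{eq:Holder} (not better, since $\wt\eta$ is only $H$-H\"older). These technical estimates — together with the Gaussian covariance bookkeeping that produces the specific constants in \eqref{eq:ga} — are the computations I would defer to Appendices~\ref{sec:approx2}--\ref{sec:details}.
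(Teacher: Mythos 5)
Your overall architecture is the same as the paper's: the It\^o split of $\hat c^n$ into a local average of $c$ plus a martingale error, the expansion of \eqref{eq:Vtilde} into signal$\times$signal, mixed, and noise$\times$noise groups, the identification of the LLN limit, the leverage bias, and a stable CLT for triangular arrays of martingale differences with Wick/Fourier computations of the constants $\ga_\nu$. However, there is a genuine gap at the core of your Step~2. You propose to pass to the limit $V^\ell_t$ by ``freezing $\eta_s,\hat\eta_s$ on the scale $k_n\del$'' and justify this by H\"older regularity (and you cite \eqref{eq:Holder}, which in fact concerns $\wt\eta,\wt\theta,\wt\vartheta$, not $\eta,\hat\eta$). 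A H\"older-based freezing, or equivalently a naive bound on the Riemann-sum-versus-integral error, produces an error of order $\del^{H}$ (and a discretization error of order $k_n^{-1}$-type) in the centering of the signal$\times$signal term, whereas the CLT requires this centering to be accurate to $o(\del^{(1-\kappa)/2})=o((k_n\del)^{1/2})$. With the optimal $\kappa=\frac{2H}{2H+1}$ this forces $H>\frac14(\sqrt5-1)\approx0.309$, so your argument breaks down precisely in the rough regime the theorem is designed for. The paper's resolution of this point is the key idea missing from your plan: in Lemma~\ref{lem:a} the constant $\Phi^H_\ell$ is rewritten exactly as an integral with the same fractional-part structure as the Riemann sum (see \eqref{eq:al}), so that the discrepancy reduces to increments $\eta^2_r-\eta^2_u$ with $\lvert r-u\rvert\leq\del$; these are then controlled not by H\"older continuity but by exploiting the Volterra/martingale structure of $\eta^2,\hat\eta^2$ from \eqref{eq:eta}, via a multiscale band decomposition of the stochastic integral and iterated martingale size estimates (the $\la_n^{(q)}$ recursion). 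This is the only place where \eqref{eq:eta} enters, and without an argument of this type your Step~2 does not deliver the stated centering at rate $(k_n\del)^{1/2}$, hence the CLT does not follow for small $H$.

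A secondary, more technical caveat: your plan to discard the rough drift components $\int_0^t g_{\wt H}(t-s)\wt\eta_s\,ds$ (and their analogues in $\eta^2,\hat\eta^2$) as ``smoother'' is not automatic, since $\wt H$ may equal $H$ and a first-difference bound only gives order $h^{H}$, the same as the martingale part. The paper needs the refined second-difference estimate of Lemma~\ref{lem:Ai} (order $(1+t^{-H})h^{(2H+1/2)\wedge1}$) together with the truncation of the first $O((k_n/\del)^{1/2}\del^{\eps})$ summands to make this removal legitimate at CLT precision; some such quantitative estimate would have to appear in your reduction as well.
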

 
The main steps of the proof of Theorem~\ref{thm:CLT-main} will be given in Section~\ref{sec:3}. 
\begin{remark}
The  distribution of the limit $\calz$ is mixed normal, with a fully explicit covariance function $\calc_t$. To make this CLT feasible, we exhibit consistent estimators of $\calc_t$ in Proposition~\ref{prop:asym}.
\end{remark}

\begin{remark} 
Taking $\kappa =\frac{2H}{2H+1}$ gives the optimal window size for spot variance estimation, as it  balances squared bias (of order $(k_{n}\delta_{n})^{2H}$)  and variance (of order $1/k_{n}$). The reader is referred to Remark 8.3 in \cite{AJ} and to the discussion after  \eqref{eq:Zt} below for more details. A formal proof of the optimality statement is  the subject of  our companion paper \cite{Paris}. 
\end{remark}

\begin{remark}\label{rem:bias}
In Section \ref{sec:4} we will see that   our estimator of $H$ is based on  the convergence of \eqref{eq:CLT}, in which  $\cala^{n,\ell,k_n}_t $ plays the role of an asymptotic bias that, in general, needs to be eliminated from  $\tilde{V}^{n,\ell,k_{n}}_{t}$. As Lemma~\ref{lem:Q3} below shows, this bias term comes from the cross-covariance term $Z^{n,\ell}_3$ defined in \eqref{eq:Zt} and is of  order  $O_\P(k_n^{-1/2-H})$. Thus, it does not converge to $0$ sufficiently fast to be negligible in \eqref{eq:calz}   unless    $\kappa>\frac{1}{2H+2}$. For the optimal $\kappa=\frac{2H}{2H+1}$, this means that no   debiasing is needed if and only if  $\frac{2H}{2H+1} >\frac{1}{2H+2}$, or $H> \frac14(\sqrt{5}-1)\approx 0.3090$. 
Unfortunately, we were not able to find a sufficiently fast debiasing statistic for $\cala^{n,\ell,k_n}_t$, which is why we  resort to a nonstandard  procedure in Section~\ref{sec:4}.
\end{remark}

\begin{remark} Notice that the optimal rate of convergence of $\wt V^{n,\ell,k_n}_t$ (after removing the bias) is $\del^{-1/(4H+2)}$, which  approaches $\del^{-1/2}$ as $H\downarrow 0$ and $\del^{-1/4}$ as $H\uparrow \frac12$ and is monotone in between. The fact that the convergence rate is faster for small $H$ compared to the semimartingale case $H=\frac12$ (see \cite{Vetter15}) seems counter-intuitive since the spot variance estimator $\wh c^n_{t,k_n\delta_n}$ should be less precise if $c$ is rough. This is true, and the rate of convergence of $\wh c^n_{t,k_n\delta_n}$ to $c_t$ indeed decreases with smaller $H$. But our goal is not to estimate $c_t$ but its roughness, and for that purpose, it is sufficient to have access to  local averages of $c_t$. As we shall see in the discussion after \eqref{eq:Zt} below, the (suitably normalized) realized autocovariances of these local averages (which is the term $Z^{n,\ell}_2$ from \eqref{eq:Zt}) converges with a rate that is \emph{independent} of $H$. At the same time, the size of the pre-estimation error (which is the term $Z^{n,\ell}_1$ from \eqref{eq:Zt}) decreases with $H$. As a consequence, the signal-to-noise ratio improves with smaller $H$, which is why we obtain a better rate of convergence. In conclusion, while it is more difficult to pin down the value of a rough path at a given time point, it is easier to recognize a rough than a smooth path in the presence of noise.
\end{remark}

\begin{remark}
Theorem~\ref{thm:CLT-main} is a joint functional stable CLT for $d$ realized autocovariances of the form \eqref{eq:Vtilde} with potentially different lags $\ell_j$ and sequences $k_n^{(j)}$ that are of the same asymptotic order $\del^{-\kappa}$ but potentially with different constants $\theta_j$. We include this multivariate CLT not  for the sole purpose of pursing utmost generality but really because we need it in Section~\ref{sec:4}, when we construct a rate-optimal estimator of $H$.  Similarly, since we need Theorem~\ref{thm:CLT-main} with a general $\kappa$ in Section~\ref{sec:4}, we prove   Theorem~\ref{thm:CLT-main} for all $\kappa\in[\frac {2H}{2H+1},\frac12]$ and not just for the optimal one. 
\end{remark}

\begin{remark}
For technical reasons, we need $\ell\geq3$ in Section~\ref{sec:4}, which is why we only consider $\ell\geq2$ in Theorem~\ref{thm:CLT-main}. If $\ell=0,1$, an additional dominating bias term appears (similarly to the situation considered in \cite{Vetter15} for semimartingales). In fact, following similar ideas to \cite{Vetter15} to remove this extra bias, one can show that Theorem~\ref{thm:CLT-main} extends to $\ell=0,1$ if one replaces $\wt V^{n,\ell,k_n}_t$ by
\begin{equation}\label{eq:01}
\begin{dcases}  \wt {\mathbb{V}}^{n,0,k_n}_t =\wt V^{n,0,k_n}_t - \frac{4}{3}(k_n\delta_n)^{-2H-1}\sum_{i=1}^{[t/\delta_n]-2k_n+1} \frac{1}{2k_n} \sum_{j=0}^{2k_n-1} (\delta^n_{i+j} x)^4 &\text{if } \ell=0,\\
\wt {\mathbb{V}}^{n,1,k_n}_t =\wt V^{n,1,k_n}_t +\frac{2}{3}(k_n\delta_n)^{-2H-1}\sum_{i=1}^{[t/\delta_n]-3k_n+1} \frac{1}{k_n} \sum_{j=0}^{k_n} (\delta^n_{i+j} x)^4&\text{if } \ell=1.
\end{dcases}
\end{equation}
\end{remark}

\section{Debiasing and rate-optimal inference for $H$}\label{sec:4}

There are two main challenges in deriving a rate-optimal estimator of $H$ on the basis of Theorem~\ref{thm:CLT-main}: first, if $H$ is small, $\wt V^{n,k_n,\ell}_t$ has a nonnegligible bias that dominates the CLT fluctuations; and second,   the optimal window size $k_{n}$ depends on the unknown roughness parameter $H$ itself.

In order to account for the asymptotic bias, our strategy is to consider multiple window sizes $k_n$ and combine the resulting $\wt V^{n,\ell,k_n}_t$'s in a very specific way that cancels the bias terms up to a negligible contribution. 
For $M\in\N$, let us   introduce 
the Vandermonde matrix
\begin{equation}\label{eq:vm} 
	V_M = \begin{pmatrix} 1 & 1 & 1 & \cdots& 1\\ 1 & 2^{-1} &  3^{-1} & \cdots & M^{-1}\\ 1 & 2^{-2} & 3^{-2} & \cdots & M^{-2}\\
		\vdots& \vdots& \vdots& \ddots & \vdots\\
		1 & 2^{-(M-1)} & 3^{-(M-1)} & \cdots & M^{-(M-1)} \end{pmatrix},
\end{equation}  
which has an inverse $V_M^{-1}$ by a standard result from linear algebra. Thus, we can define
\begin{equation}\label{eq:w}
	\wt w(M)=V_M^{-1}e_M,\quad e_M = (0,\ldots,0,1)^T,\quad w(M)=\frac{\wt w(M)}{  \lvert \wt w(M)\rvert},
\end{equation}
so that $w(M)$ is the normalized  last column of $V_M^{-1}$. The following proposition shows that a very specific linear combination of $\wt V^{n,\ell,mk_n}_t$ for different $m$'s  removes the dominating part of the bias. While Theorem~\ref{thm:CLT-main} only requires $\ell\geq2$, we have to impose $\ell\geq3$ from now on.
\begin{prop}\label{prop:debias}
	Suppose that the conditions of Theorem~\ref{thm:CLT-main} are satisfied with $H\in(0,\frac12]$ and that $k_n\sim \theta \del^{-\kappa}$ for some $\theta>0$ and $\kappa\in[\frac{2H}{2H+1},\frac12]$. Furthermore, assume that $\ell\geq3$. Defining
	\begin{equation}\label{eq:M} 
		M= M(H)=[\tfrac12-H + \tfrac1{4H} ]+1,
	\end{equation}
	we have that 
	\begin{equation}\label{eq:nobias} 
		\sum_{m=1}^M w(M)_m m^{1/2+H} \wt V^{n,\ell,mk_n}_t - V^{\ell}_{t}
		\sum_{m=1}^M w(M)_m m^{1/2+H} = O_\P((k_n\del)^{1/2}).
	\end{equation}
\end{prop}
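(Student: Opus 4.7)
The strategy is to apply Theorem~\ref{thm:CLT-main} jointly to the $M$ estimators $\wt V^{n,\ell,mk_n}_t$ for $m=1,\ldots,M$, which is legitimate because the sequences $(mk_n)_{n\in\N}$ share the exponent $\kappa$ (only the constants $m\theta$ differ). The CLT immediately yields $\wt V^{n,\ell,mk_n}_t - V^\ell_t - \cala^{n,\ell,mk_n}_t = O_\P((k_n\del)^{1/2})$ uniformly on compacts, and since $w(M)_m m^{1/2+H}$ is bounded in $m$, the contribution of the CLT fluctuations to the weighted sum is already of the desired order. The task therefore reduces to proving that $\sum_{m=1}^M w(M)_m m^{1/2+H}\cala^{n,\ell,mk_n}_t = O_\P((k_n\del)^{1/2})$.

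The key structural observation is a scaling identity: the prefactor $(mk_n\del)^{-1/2-H}$ in \eqref{eq:bias} combines with the weight $m^{1/2+H}$ to yield $(k_n\del)^{-1/2-H}$, independent of $m$. Writing $g(v)=\Delta^3_1(\ell-1-v)_+^{H+1/2}$, $f_N(u)=\tfrac{1}{N}\sum_{i=0}^{N-1}g(\tfrac{i+\{u/\del\}}{N})$ and $\Psi_u=\int_{[u/\del]\del}^u\si_v dW_v\bigl(\si_u\eta_u-\si_{[u/\del]\del}\eta_{[u/\del]\del}\bigr)$, this gives
$$\sum_{m=1}^M w(M)_m m^{1/2+H}\cala^{n,\ell,mk_n}_t = -\frac{2K_H^{-1}}{H+\tfrac12}(k_n\del)^{-1/2-H}\int_0^t \Phi_{k_n}(u)\Psi_u\,du,$$
with $\Phi_{k_n}(u)=\sum_{m=1}^M w(M)_m f_{mk_n}(u)$. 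The hypothesis $\ell\geq 3$ is used here in a crucial way: it makes the four summands defining $g(v)$ strictly positive on $[0,1]$, so that $g\in C^\infty([0,1])$.

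I would then apply Euler--Maclaurin to $f_N$: smoothness of $g$ yields the expansion $f_N(u)=\int_0^1 g(v)dv + \sum_{j=1}^{M-1}c_j(\{u/\del\})N^{-j} + O(N^{-M})$, uniform in $u\in[0,T]$, with coefficients $c_j$ depending only on $g^{(j-1)}(0),g^{(j-1)}(1)$ and Bernoulli polynomials of $\{u/\del\}$. Substituting $N=mk_n$, summing, and invoking the Vandermonde identities $\sum_{m=1}^M w(M)_m m^{-j}=0$ for $j=0,\ldots,M-2$ (which follow from $\wt w(M)=V_M^{-1}e_M$), every term of order $k_n^{-j}$ with $j\leq M-2$ cancels and we obtain $\sup_u|\Phi_{k_n}(u)| = O(k_n^{-(M-1)})$. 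Combined with the bound $\E[|\Psi_u|]\lesssim \del^{1/2+H}$ -- obtained via Cauchy--Schwarz from $\|\int_{[u/\del]\del}^u\si_v dW_v\|_{L^2}\lesssim\del^{1/2}$ together with an $L^2$-H\"older estimate of order $\del^H$ for $\si\eta$ derived from \eqref{eq:ci} under Assumption~\ref{ass:CLT'} -- this yields
$$\sum_{m=1}^M w(M)_m m^{1/2+H}\cala^{n,\ell,mk_n}_t = O_\P\bigl((k_n\del)^{-1/2-H}\,k_n^{-(M-1)}\,\del^{1/2+H}\bigr) = O_\P(k_n^{1/2-H-M}).$$
With $k_n\sim\theta\del^{-\kappa}$ this is $O_\P(\del^{\kappa(M+H-1/2)})$, which is dominated by $(k_n\del)^{1/2}\asymp\del^{(1-\kappa)/2}$ iff $\kappa(M+H)\geq\tfrac12$. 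Over $\kappa\in[\tfrac{2H}{2H+1},\tfrac12]$ the binding case is $\kappa=\tfrac{2H}{2H+1}$, which becomes $M\geq\tfrac{1}{4H}+\tfrac12-H$; the integer $M$ in \eqref{eq:M} is precisely the smallest one satisfying this bound.

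The main obstacle I anticipate is making the Euler--Maclaurin remainder quantitatively uniform in the shift $\{u/\del\}\in[0,1)$ with a constant depending only on the $C^M$-norm of $g$, and verifying the $L^2$-H\"older estimate of $\si\eta$ in the presence of possibly degenerate $\eta$; this latter point may require controlling $\eta^2-\eta^2_{[u/\del]\del}$ first via \eqref{eq:ci} and then handling the square root carefully, or equivalently working with $\Psi_u^2$ so that only the product $c\eta^2$ enters.
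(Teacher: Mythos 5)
Your proposal is correct and follows essentially the same route as the paper's proof: reduce via Theorem~\ref{thm:CLT-main} to bounding $\sum_m w(M)_m m^{1/2+H}\cala^{n,\ell,mk_n}_t$, exploit the scaling $(mk_n\del)^{-1/2-H}m^{1/2+H}=(k_n\del)^{-1/2-H}$, expand the shifted Riemann sum in powers of $(mk_n)^{-1}$ with $k_n$-independent, $\{u/\del\}$-dependent coefficients (the paper does this by Taylor expansion in the shift followed by Euler--Maclaurin on the unshifted sum and regrouping by total order, which is equivalent to your single shifted Euler--Maclaurin with Bernoulli polynomials, using smoothness from $\ell\geq3$ exactly as you do), and then cancel all orders $k_n^{-j}$, $j\leq M-2$, via the Vandermonde relations $\sum_m w(M)_m m^{-j}=0$, concluding from the choice of $M$ and $\kappa\geq\tfrac{2H}{2H+1}$. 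The only differences are cosmetic packaging (the paper forms the vector of processes $\un\Xi^{n,\ell}(t)$ and hits it with $V_M^{-1}$ rather than invoking the orthogonality relations directly), so no genuine gap.
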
  

Of course, the left-hand side of \eqref{eq:nobias} multiplied by $(k_n\del)^{-1/2}$ satisfies a CLT, but since we do not need this in the following, we only prove the simpler version \eqref{eq:nobias}.
\begin{proof}[Proof of Proposition~\ref{prop:debias}]
	By Theorem~\ref{thm:CLT-main}, it suffices to show that
	\begin{equation}\label{eq:toshow-2} 
		\sum_{m=1}^M w(M)_m m^{1/2+H} \cala^{n,\ell,mk_n}_t =o_\P((k_n\del)^{1/2} ),
	\end{equation}
	where $\cala^{n,\ell,k_n}_t$ is defined in \eqref{eq:bias}.
	For
	$\ell\geq3$, the function $v\mapsto \Delta^3_1 G_H(\ell-1-\frac{i}{k_n} + v)$ is smooth on $[ -\frac12,0]$,
	with derivatives $(\Delta^3_1 G_H)^{(j)}(\ell-1-\frac{i}{k_n} + v)$ that are uniformly bounded in $v\in[ -\frac12,0]$, 
	$i$, $k_n$ and $j=0,\dots, M$.  Thus, by \eqref{eq:bias} and Taylor's theorem, 
	\begin{equation}\label{eq:A}\begin{split} 
			\cala^{n,\ell,k_n}_t	& =-  2
			(k_{n}\delta_{n})^{-1/2-H} \sum_{j=0}^{M-1} \frac1{j!} 
			\int_{0}^t \frac{1}{k_{n}}\sum_{i=0}^{k_n-1} ( \Delta^3_1 G_H)^{(j)}(\ell-1-\tfrac{i}{k_n})\bigl(-\tfrac{\{u/\del\}}{k_n}\bigr)^j\\
			&\quad\times (y_{u}-y_{[u/\del]\del})(\si_{u}	\eta_{u}-\si_{[u/\del]\del}\eta_{[u/\del]\del}) du+ O_\P(k_n^{-1/2-H-M}).
	\end{split}\end{equation}
	As the reader can  verify, by our definition of $M$ in \eqref{eq:M}, we have that $k_n^{-1/2-H-M}=o((k_n\del)^{1/2})$. 
	
	Next, we recognize that the sum over $i$ is a Riemann sum approximation of the integral $\int_0^1 ( \Delta^3_1 G_H)^{(j)}(\ell-1-v) dv$. By the Euler--Maclaurin formula (see e.g., \cite[Theorem~1]{Lampret01}), there are finite numbers $\xi^\ell_{j,j'}$ such that 
	\begin{align*} 
		\frac{1}{k_{n}}\sum_{i=0}^{k_n-1} ( \Delta^3_1 G_H)^{(j)}(\ell-1-\tfrac{i}{k_n}) = \sum_{j'=0}^{M-1}   \xi^\ell_{j,j'} k_n^{-j'} + O(k_n^{-M}).
	\end{align*}
	Inserting this back into \eqref{eq:A}, we can ignore the $O(k_n^{-M})$-term as before. In fact, we only have to keep those terms for which $j+j'\leq M-1$. Thus, letting
	\begin{align*}
		\Xi^{n,\ell}_{p}(t) &=  -2\delta_{n}^{-1/2-H} \sum_{j,j'=0}^{M-1} \bone_{\{j+j'=p\}} \frac{\xi^\ell_{j,j'}}{j!} 	 \\
		&\quad\times\int_0^t (-\{u/\del\})^j (y_{u}-y_{[u/\del]\del})(\si_{u}	\eta_{u}-\si_{[u/\del]\del}\eta_{[u/\del]\del}) du, 
	\end{align*} 
	we have that 
	\[ \cala^{n,\ell,k_n}_t= k_n^{-1/2-H} \sum_{p=0}^{M-1} \Xi^{n,\ell}_p(t) k_n^{-p} + o_\P((k_n\del)^{1/2}). \]
	Note that $\Xi^{n,\ell}_p(t)$ depends on $\del$ but not on  $k_n$. Therefore, applying the previous identity to $mk_n$ for $m=1,\dots, M$, we arrive at the following systems of equations:
	\begin{equation}\label{eq:system} 
		m^{1/2+H}\cala^{n,\ell,mk_n}_t=    \sum_{p=0}^{M-1} m^{-p} \Xi^{n,\ell}_p(t) k_n^{-1/2-H-p} + o_\P((k_n\del)^{1/2}),\quad m=1,\dots, M.
	\end{equation}  
	Thus, introducing 
	\begin{align*}
		\un\cala^{n,\ell,k_n}_t&=(1^{1/2+H}\cala^{n,\ell,k_n}_t,\dots, M^{1/2+H}\cala^{n,\ell,Mk_n}_t)^T, \\
		\un \Xi^{n,\ell}(t)&=(\Xi^{n,\ell}_0(t) k_n^{-1/2-H-0},\dots,\Xi^{n,\ell}_{M-1}(t) k_n^{-1/2-H-(M-1)})^T,
	\end{align*} 
	we can rewrite \eqref{eq:system} as
	\[ \un\cala^{n,\ell,k_n}_t = V^T_M\un \Xi^{n,\ell} (t)+ o_\P((k_n\del)^{1/2}), \]
	where $V_M$ is the Vandermonde matrix \eqref{eq:vm}. 
	Thus, by the definition of $w(M)$ (see \eqref{eq:w}),
	\begin{align*}
		\sum_{m=1}^M w(M)_m m^{1/2+H}\cala^{n,\ell, m k_n}_t&= w(M)^T\un \cala^{n,\ell,k_n}_t\\
		& = \lvert \wt w(M)\rvert^{-1} e_M^T (V_M^{-1})^T V^T_M\un \Xi^{n,\ell}(t) + o_\P((k_n\del)^{1/2}) \\
		&= \lvert \wt w(M)\rvert^{-1}  \Xi^{n,\ell}_{M-1}(t)k_n^{-1/2-H-(M-1)}+ o_\P((k_n\del)^{1/2}).
	\end{align*}  
	Since $\Xi^{n,\ell}_{M-1}(t)=O_\P(1)$, \eqref{eq:toshow-2} follows from our choice of $M$.
\end{proof}

We now explain how to implement this debiasing procedure in practice.  For the remaining part of this section, we assume that 
\begin{equation}\label{eq:notzero} 
	\int_0^t (\eta_s^2+\wh\eta_s^2) ds >0\quad \text{a.s.}
\end{equation}
(or, equivalently, all forthcoming statements are valid without \eqref{eq:notzero} but in restriction to the set $\{\int_0^t (\eta_s^2+\wh\eta_s^2) ds >0\}$).
Define
\begin{equation}\label{eq:Vhat}   \begin{multlined}[][0.9\textwidth]
		\wh{V}^{n,\ell,k_n}_{t}  
		=  \del\sum_{i=1}^{[ t/\delta_{n} ]-(\ell+2)k_n+1 }  (   \hat{c}^{n}_{(i+k_n)  \delta_{n},k_n\delta_{n}}   -      \hat{c}^{n}_{i \delta_{n}, k_n\delta_{n}}   )\\  \times  (   \hat{c}^{n}_{(i+(\ell+1)k_n)  \delta_{n},k_n\delta_{n}}   -     \hat{c}^{n}_{(i+\ell k_n) \delta_{n}, k_n\delta_{n}}   )\end{multlined}
\end{equation}
for $\ell\geq3$ and $k_n\in\N$, which clearly satisfies $\wh{V}^{n,\ell,k_n}_{t}=(k_n\del)^{2H}\wt{V}^{n,\ell,k_n}_{t}$ but in contrast to $\wt{V}^{n,\ell,k_n}_{t}$ is actually a statistic since it does not depend on the unknown $H$. 
We  construct a first pilot estimator of $H$ by fixing two lags
$\ell_1,\ell_2\geq3$ and then defining
\begin{equation}\label{eq:prelim} 
	\wt H_n = \vp^{-1}\biggl(\frac{\wh V^{n,\ell_1,\wt k_n}_t}{\wh V^{n,\ell_2,\wt k_n}_t}\biggr),
\end{equation}
where $\vp: H\mapsto \Phi^H_{\ell_1}/\Phi^H_{\ell_2}$ is assumed to be a diffeomorphism and 
\begin{equation}\label{eq:wtkn} 
	\wt k_n = [\del^{-1/2}].
\end{equation}
This choice of $\wt k_n$ 
has the advantage that it makes $\wt H_n$ a consistent estimator of $H$, which furthermore satisfies a bias-free central limit theorem regardless of the value of $H\in(0,\frac12)$. On the downside, its rate is poor if $H$ is small. In the following, we therefore propose an iterative approach to improve the rate, which at the same time retains the bias-free property of the resulting estimators. To this end, let
\begin{equation}\label{eq:Hj} \begin{split}
		\H&=\{	H^{(j)}=\tfrac14(\sqrt{4j^2-4j+5}-2j+1): j\in\N\}\\
		&=\{0.3090,0.1514,0.0963,0.0700,\dots\},\end{split}
\end{equation}
which is precisely the set of values of $H$ for which $\frac12-H+\frac1{4H}$ (as it appears in \eqref{eq:M}) is an integer. Therefore, if $H^{(j)}<H\leq H^{(j-1)}$ for $j\in\N$ (where $H^{(0)}=\frac12$), then $M=j$. Using the pilot estimator $\wt H_n$, we now define 
\begin{equation}\label{eq:whM} 
	\wh M_n = \Bigl[\tfrac12-\wt H_n + \tfrac1{4\wt H_n} + \del^{1/4}\log\del^{-1}\Bigr]+1
\end{equation}
as an estimator of the number $M$ from \eqref{eq:M}.  Since $\wt H_n$ is a consistent estimator of $H$ and $\del^{1/4}\log\del^{-1}\to0$, if $H\notin \calh$, we have 
\begin{equation}\label{eq:M-conv} 
	\lim_{n\to\infty}\P(\wh M_n = M)=1.
\end{equation}
If $H\in\calh$, then we still have $\tfrac12-\wt H_n + \frac1{4\wt H_n} \to \tfrac12-  H + \frac1{4  H}$ in probability, but since the limit is an integer, after rounding, $[\tfrac12-\wt H_n + \frac1{4\wt H_n}]$ will typically jump between two consecutive integers as $n$ increases. To avoid that, we have included  $\del^{1/4}\log \del^{-1}$ in the definition of $\wh M_n$, which is asymptotically bigger than the $\del^{1/4}$-fluctuations of  $\tfrac12-\wt H_n + \frac1{4\wt H_n}$ and therefore guarantees that we  have $\P(\wh M_n = M)\to1$ for $H\in\calh$ as well.

Having defined $\wh M_n$, we now set $\bar H^{(0)}_n = \wt H_n$ and define consecutively
\begin{equation}\label{eq:barH} 
	\bar H^{(j)}_n = \varphi^{-1}\biggl(\frac{\sum_{m=1}^{j} w(j)_m m^{1/2-\ov H^{(j-1)}_n} \wh V^{n,\ell_1,m\bar k^{(j)}_n}_t}{\sum_{m=1}^{j} w(j)_m m^{1/2-\ov H^{(j-1)}_n} \wh V^{n,\ell_2,m\bar k_n^{(j)}}_t}\biggr),\quad \ov k^{(j)}_n = [\del^{-2H^{(j)}/(2H^{(j)}+1)}],
\end{equation}
for $j=1,\dots, \wh M_n-1$ and let
\begin{equation}\label{eq:ovH} 
	\ov H_n=\ov H^{(\wh M_n-1)}_n.
\end{equation}
\begin{prop}\label{prop:debias-2}
	Suppose that the conditions of Theorem~\ref{thm:CLT-main} are satisfied with $H\in(0,\frac12)$ and assume \eqref{eq:notzero}. Further fix two  lags $\ell_1,\ell_2\geq3$ such that the function $\vp:H\mapsto \Phi^H_{\ell_1}/\Phi^H_{\ell_2}$, where $\Phi^H_\ell$ is defined in \eqref{eq:Phi}, is a diffeomorphism on $(0,\frac12)$. For any $j\in\N_0$, if $H\leq H^{(j)}$, then
	\begin{equation}\label{eq:nobias-2} 
		\bar H^{(j)}_n - H = O_\P((\bar k^{(j)}_n\del)^{1/2}).
	\end{equation}
\end{prop}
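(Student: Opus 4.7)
The plan is to prove the claim by induction on $j \in \N_0$. For the base case $j = 0$, we have $\bar H^{(0)}_n = \wt H_n$ defined via \eqref{eq:prelim} using the window $\wt k_n = [\del^{-1/2}]$, which corresponds to $\kappa = \tfrac12$. Since $\tfrac12 > 1/(2H+2)$ for every $H \in (0, \tfrac12)$, the discussion following Theorem~\ref{thm:CLT-main} gives $\cala^{n,\ell,\wt k_n}_t = o_\P(\del^{1/4})$, so the CLT yields $\wt V^{n,\ell,\wt k_n}_t = V^\ell_t + O_\P(\del^{1/4})$. The identity $\wh V^{n,\ell,\wt k_n}_t = (\wt k_n\del)^{2H}\wt V^{n,\ell,\wt k_n}_t$ shows that the prefactor and the $\int_0^t(\eta_s^2+\wh\eta_s^2)\,ds$ factor from $V^\ell_t$ both cancel when taking the ratio for $\ell_1,\ell_2$; combined with \eqref{eq:notzero} to keep $V^{\ell_2}_t$ bounded below, this gives $\wh V^{n,\ell_1,\wt k_n}_t/\wh V^{n,\ell_2,\wt k_n}_t = \vp(H) + O_\P(\del^{1/4})$, and applying $\vp^{-1}$ (delta method) yields $\wt H_n - H = O_\P(\del^{1/4}) = O_\P((\wt k_n\del)^{1/2})$.

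For the inductive step at level $j \geq 1$, suppose the claim at level $j-1$. Because $H \leq H^{(j)} < H^{(j-1)}$, the inductive hypothesis gives $\eps_n := \bar H^{(j-1)}_n - H = O_\P((\bar k^{(j-1)}_n\del)^{1/2})$. Writing $\wh V^{n,\ell,m\bar k^{(j)}_n}_t = (m\bar k^{(j)}_n\del)^{2H}\wt V^{n,\ell,m\bar k^{(j)}_n}_t$ and $m^{-\eps_n} = 1 - \eps_n\log m + O(\eps_n^2)$ for $m \leq j$, one rewrites the numerator and denominator in \eqref{eq:barH} as
\[ (\bar k^{(j)}_n\del)^{2H}\Bigl(\sum_{m=1}^j w(j)_m m^{1/2+H}\wt V^{n,\ell,m\bar k^{(j)}_n}_t + O_\P(\eps_n)\Bigr).\]
The approach is now to apply the strategy of the proof of Proposition~\ref{prop:debias}, but with only $M = j$ debiasing coefficients: Taylor-expanding each $\cala^{n,\ell,m\bar k^{(j)}_n}_t$ as in that proof, the Vandermonde identity $V_j \wt w(j) = e_j$ cancels all bias terms of order $(\bar k^{(j)}_n)^{-1/2-H-p}$ for $p = 0,\dots,j-2$. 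When forming the ratio for $\ell_1,\ell_2$, the $\ell$-independent prefactor $(\bar k^{(j)}_n\del)^{2H}\bigl(C_j(H) + O_\P(\eps_n)\bigr)$ with $C_j(H) = \sum_m w(j)_m m^{1/2+H}$ cancels, leaving the ratio equal to $\vp(H)$ plus perturbations controlled by the CLT and the uncancelled bias; the delta method applied to $\vp^{-1}$ then closes the induction.

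The main obstacle is bounding the residual bias $|\wt w(j)|^{-1}(\bar k^{(j)}_n)^{-1/2-H-(j-1)}\Xi^{n,\ell}_{j-1}(t)$ left over by $w(j)$, in combination with the $O_\P((\bar k^{(j)}_n\del)^{1/2})$ CLT fluctuations and the cross contribution $O_\P(\eps_n^2 + \eps_n \cdot \text{bias})$. The defining identity $4(H^{(j)})^2 + (4j-2)H^{(j)} = 1$ of $H^{(j)}$ shows that $(\bar k^{(j)}_n)^{-1/2-H-(j-1)} = (\bar k^{(j)}_n\del)^{1/2}$ precisely at $H = H^{(j)}$; for $H < H^{(j)}$, one must return to the proof of Proposition~\ref{prop:debias} and exploit the $\ell$-structure of $\Xi^{n,\ell}_{j-1}(t)$ inherited from \eqref{eq:bias} via the Euler--Maclaurin coefficients $\xi^\ell_{j',j''}$, combined with the ratio structure $V^{\ell_1}_t/V^{\ell_2}_t = \Phi^H_{\ell_1}/\Phi^H_{\ell_2}$, to show that the residual contribution to the ratio is $O_\P((\bar k^{(j)}_n\del)^{1/2})$ even in that regime. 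The $O_\P(\eps_n^2)$ and $O_\P(\eps_n\cdot\text{bias})$ cross terms are then straightforward to absorb using the inductive control on $\eps_n$ and the relation $\bar k^{(j-1)}_n \geq \bar k^{(j)}_n$.
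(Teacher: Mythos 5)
Your overall skeleton (induction, base case from the CLT with $\kappa=\tfrac12$, rewriting $\wh V^{n,\ell,m\bar k^{(j)}_n}_t=(m\bar k^{(j)}_n\del)^{2H}\wt V^{n,\ell,m\bar k^{(j)}_n}_t$, replacing the random exponent via the induction hypothesis, the mean-value/delta-method decomposition, and the Vandermonde cancellation of the bias expansion) is the same as the paper's. But the proof is not complete, and the gap sits exactly at the crux of the proposition. You correctly compute that after the $j$-fold Vandermonde cancellation the leftover bias is of order $(\bar k^{(j)}_n)^{-1/2-H-(j-1)}$, that this matches $(\bar k^{(j)}_n\del)^{1/2}$ only at $H=H^{(j)}$, and that it is of \emph{larger} order whenever $H<H^{(j)}$ --- which is precisely the regime the proposition covers. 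At that point you do not give an argument: you assert that one can ``exploit the $\ell$-structure of $\Xi^{n,\ell}_{j-1}(t)$ \ldots combined with the ratio structure $V^{\ell_1}_t/V^{\ell_2}_t=\Phi^H_{\ell_1}/\Phi^H_{\ell_2}$'' to make the residual contribution to the ratio $O_\P((\bar k^{(j)}_n\del)^{1/2})$. No such cancellation is established, and there is no reason to expect one: in the decomposition \eqref{eq:dec-2} the bias of each lag enters through $\partial_{x_\iota}\psi$, so what would have to be small is essentially $\Xi^{n,\ell_1}_{j-1}(t)-(\Phi^H_{\ell_1}/\Phi^H_{\ell_2})\,\Xi^{n,\ell_2}_{j-1}(t)$; but $\Xi^{n,\ell}_{j-1}$ depends on $\ell$ only through the Euler--Maclaurin/Taylor coefficients of $(\Delta^3_1G_H)(\ell-1-\cdot)$ and its derivatives coming from \eqref{eq:bias}, which are governed by increments of $t^{H+3/2}_+$, whereas $\Phi^H_\ell\propto\delta^4_1\lvert\ell\rvert^{2H+2}$; these $\ell$-dependencies are unrelated, so generically no cancellation occurs. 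Declaring this ``the main obstacle'' and leaving it to an unproven mechanism means the key assertion \eqref{eq:nobias-2} for $H<H^{(j)}$ is not proved.

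The paper closes this step by a different route, with no cancellation between the two lags: it reduces to \eqref{eq:toshow}, uses the crude bound $\cala^{n,\ell,m\bar k^{(j)}_n}_t=O_\P((\bar k^{(j)}_n)^{-1/2-H})$ from Lemma~\ref{lem:Q3} together with the explicit formula for $H^{(j)}$ and the induction hypothesis \emph{only} to control the error of replacing the random weight $m^{1/2-\bar H^{(j-1)}_n+2H}$ by $m^{1/2+H}$, and then handles the remaining deterministic-weight bias sum by appealing to \eqref{eq:toshow-2} in the proof of Proposition~\ref{prop:debias}; your proposal neither reproduces this argument nor supplies a substitute for it. A smaller point: your intermediate display, which writes the numerator as $(\bar k^{(j)}_n\del)^{2H}\bigl(\sum_m w(j)_m m^{1/2+H}\wt V^{n,\ell,m\bar k^{(j)}_n}_t+O_\P(\eps_n)\bigr)$, is misleading as bookkeeping: an additive error of order $\eps_n=O_\P((\bar k^{(j-1)}_n\del)^{1/2})$ that did not cancel between numerator and denominator would already destroy the claimed rate, since $(\bar k^{(j-1)}_n\del)^{1/2}\gg(\bar k^{(j)}_n\del)^{1/2}$; the point (made precise by the paper's mean-value decomposition \eqref{eq:help3}--\eqref{eq:dec-2}) is that the $\eps_n$-perturbation only multiplies the common $\ell$-independent factor, the bias terms, and the already small CLT terms. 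You gesture at this with the cancellation of $C_j(H)+O_\P(\eps_n)$, but the argument should be organized so that this is explicit.
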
  
\begin{proof} 
	We prove the claim by induction, and since the base case $j=0$ corresponds to the CLT of $\wt H_n$, we can consider $j\geq1$ and assume that \eqref{eq:nobias-2} is true for $j-1$.
	We  rewrite
	\begin{equation}\label{eq:help3} 
		\begin{split} \bar H^{(j)}_n &= \varphi^{-1}\biggl(\frac{\sum_{m=1}^{j} w(j)_m m^{1/2-\bar H^{(j-1)}_n+2H} (m\ov k_n^{(j)}\del)^{-2H} \wh V^{n,\ell_1,m  \ov k^{(j)}_n}_t}{\sum_{m=1}^{j} w(j)_m m^{1/2-\ov H^{(j-1)}_n+2H} (m\ov k_n^{(j)}\del)^{-2H} \wh V^{n,\ell_2,m  \ov k^{(j)}_n}_t}\biggr)\\
			&= \varphi^{-1}\biggl(\frac{\sum_{m=1}^{j} w(j)_m m^{1/2-\ov H^{(j-1)}_n+2H}  \wt V^{n,\ell_1,m \ov k^{(j)}_n}_t}{\sum_{m=1}^{j} w(j)_m m^{1/2-\ov H^{(j-1)}_n+2H}  \wt V^{n,\ell_2,m \ov k^{(j)}_n}_t}\biggr)\end{split}
	\end{equation}	
	and recall \eqref{eq:Vt}, \eqref{eq:bias} and that $\varphi$ is a diffeomorphism. Therefore, defining $\psi(x_1,x_2)=\vp^{-1}(x_1/x_2)$, we can use the mean-value theorem to find $(\xi^{n,1}_t,\xi^{n,2}_t)$ satisfying $$\xi^{n,\iota}_t \stackrel{\P}{\longrightarrow} \sum_{m=1}^j w(j)_m m^{1/2+H} \Phi^H_{\ell_\iota}$$ for $\iota=1,2$ such that 
	\begin{equation}\label{eq:dec-2} 
		\begin{split}
			\ov H^{(j)}_n-H	&= \sum_{\iota=1,2} \partial_{x_\iota} \psi(\xi^{n,1}_t,\xi^{n,2}_t)\sum_{m=1}^j w(j)_m m^{1/2-\ov H^{(j-1)}_n+2H} \cala^{n,\ell_\iota,m\ov k_n^{(j)}}_t \\
			& \quad+\begin{multlined}[t][0.75\textwidth]\sum_{\iota=1,2} \partial_{x_\iota} \psi(\xi^{n,1}_t,\xi^{n,2}_t)\sum_{m=1}^j w(j)_m m^{1/2-\ov H^{(j-1)}_n+2H} \\
				\times(\wt V^{n,\ell_\iota,m\ov k_n^{(j)}}_t- \cala^{n,\ell_\iota,m\ov k_n^{(j)}}_t-V^{\ell_\iota}_t).\end{multlined}
		\end{split} \!\!\!
	\end{equation}
	By Theorem~\ref{thm:CLT-main}, $\wt V^{n,\ell_\iota,m\ov k_n^{(j)}}_t- \cala^{n,\ell_\iota,m\ov k_n^{(j)}}_t-V^{\ell_\iota}_t=O_\P((\ov k_n^{(j)} \del)^{1/2})$. 
	It remains to show that the first term on the right-hand side of \eqref{eq:dec-2} is $O_\P((\ov k_n^{(j)} \del)^{1/2})$. Let us fix $\iota$. Since $\partial_{x_\iota} \psi(\xi^{n,1}_t,\xi^{n,2}_t)$ converges in probability, we only have to show that for any $\ell\geq3$,
	\begin{equation}\label{eq:toshow} 
		\sum_{m=1}^j w(j)_m m^{1/2-\ov H^{(j-1)}_n+2H} \cala^{n,\ell,m\ov k_n^{(j)}}_t =O_\P((\ov k_n^{(j)} \del)^{1/2}).
	\end{equation}
	By Lemma~\ref{lem:Q3},    $\cala^{n,\ell,m \ov k_n^{(j)}}_t =O_\P((\ov k_n^{(j)})^{-1/2-H})=o_\P(\del^{H^{(j)}/(1+2H^{(j)})})$. At the same time, for any $m=1,\dots, j$, we have that $m^{1/2-\ov H^{(j-1)}_n+2H}-m^{1/2+H} = O_\P((\ov k_n^{(j-1)}\del)^{1/2})=O_\P(\del^{1/(4H^{(j-1)}+2)})$ by the induction hypothesis. So if we replace $m^{1/2-\ov H^{(j-1)}_n+2H}$ by $m^{1/2+H}$ in \eqref{eq:toshow}, the overall error is
	$o_\P(\del^{H^{(j)}/(1+2H^{(j)})+1/(4H^{(j-1)}+2)})$, which can be shown to be  $o_\P(\del^{1/(4H^{(j)}+2)})$ by using the explicit formula for $H^{(j)}$ from \eqref{eq:Hj}. Now once we have replaced $m^{1/2-\ov H^{(j-1)}_n+2H}$ by $m^{1/2+H}$, \eqref{eq:toshow} follows from Proposition~\ref{prop:debias} (or, more directly, from \eqref{eq:toshow-2}).
\end{proof}

By \eqref{eq:M-conv} and the previous proposition, $\bar H_n$ is our best estimator so far: it is bias-free and satisfies a CLT with rate $\del^{1/(4H^{(j-1)}+2)}$, where $j\in\N$ is such that $H^{(j)}<H\leq H^{(j-1)}$. Unless $H\in\calh$, this rate is close  but still not equal to the optimal one, which is $\del^{1/(4H+2)}$.
As alluded to before, the remaining obstacle to rate efficiency is the fact that the optimal window size $k_n$ should be of order $\del^{-2H/(2H+1)}$, which depends on the parameter $H$ to be estimated.
While    $\bar H_n$ is not rate-optimal in general, it is nevertheless consistent for $H$, so one might be tempted to use $\check k_n=[\del^{-2\bar H_n/(2\bar H_n+1)}]$ as a new window size and to construct a new estimator similarly to \eqref{eq:barH} with $\check k_n$ substituted for $\wt k_n$ and $\wh M_n$ substituted for $j$. While this is a natural approach, there is a pitfall inherent in any such plug-in estimator: the sequence $\check k_n$ is \emph{random} as it depends on the data through $\ov H_n$. As Theorem~\ref{thm:CLT-main} was shown with a deterministic window size, it cannot be applied with $\check k_n$. 

In order to tackle this problem, we use the randomization approach of \cite{Szymanski22} that relies on the following---seemingly paradoxical---idea: Add more randomness to  $\check k_n$ in order to reduce its randomness!
To see what this means and why it works, consider an auxiliary probability space $(\Om',\calf',\P')$ equipped with a uniform random variable $U$. As usual, we form the product space
$$ \wh\Om= \Om\times\Om',\quad \wh\calf= \calf\otimes\calf',\quad \wh \P=  \P\otimes \P'$$
and extend all random variables on $(  \Om, \calf, \P)$ to the new space in the canonical fashion. To simplify the notation, we keep writing $\P$ in the following, but whenever $U$ appears, of course, it stands for $\wh\P$. In addition, we choose two sequences $q_n\sim q/\log \del^{-1}$ for some $q>0$ and $r_n\to\infty$ such that $\del^{-1/4}/r_n\to\infty$ and $\log \del^{-1}/r_n\to0$. We then define the \emph{oracle sequence}
\begin{equation}\label{eq:whkn} 
	\wh k_n=[\del^{-{2\bar H^U_n}/(2\bar H^U_n+1)}],
\end{equation}
where 
\begin{equation}\label{eq:barHUn} 
	\bar H^U_n = \frac{[r_n(\bar H_n+q_n)+U]+1}{r_n}
\end{equation}
is a randomized version of $\bar H_n$. Note that $\bar H^U_n$ depends both on the data (through $\ov H_n$) and on $U$, which is what we mean by ``adding randomness.'' 
The success of the randomization approach pivots on the  following \emph{oracle property}, proved in \cite[Lemma~9]{Szymanski22}:
\begin{equation}\label{eq:magic} 
	\lim_{n\to\infty}	\P(\wh k_n = k_n^U)= 1,
\end{equation}
where
\begin{equation}\label{eq:knU} 
	k_n^U= [\del^{-2H_n^U/(2H_n^U+1)}], \qquad H^U_n=\frac{[r_n( H+q_n)+U]+1}{r_n}.
\end{equation}
Note that $k_n^U$ only depends on $U$ but \emph{no longer on $\calf$}, in particular, no longer on the data. This is what we mean by ``reducing randomness.'' In conclusion, what the randomization approach really does {\color{blue}is} to \emph{exchange} data-dependent randomness for data-independent randomness in the sequence $\wh k_n$. 
And this clearly pays off: conditionally on $U$, the sequence $k_n^U$ is deterministic, to which we can apply all limit theorems obtained so far. Thus, our rate-optimal estimator of $H$ is
\begin{equation}\label{eq:whH} 
	\wh H_n = \varphi^{-1}\biggl(\frac{\sum_{m=1}^{\wh M_n} w(\wh M_n)_m m^{1/2-\ov H_n} \wh V^{n,\ell_1,m\wh k_n}_t}{\sum_{m=1}^{\wh M_n} w(\wh M_n)_m m^{1/2-\ov H_n} \wh V^{n,\ell_2,m\wh k_n}_t}\biggr),
\end{equation}
whose asymptotic behavior is given in the following theorem, our main result.
\begin{theorem}\label{thm:est}
	Grant Assumption~\ref{ass:CLT} and suppose that $q_n\sim q/\log\del^{-1}$ for some $q>0$ and $r_n$ is an increasing sequence such that $\del^{-1/4}/r_n\to\infty$ and $\log \del^{-1}/r_n\to0$.  Moreover, fix two  lags $\ell_1,\ell_2\geq3$ such that the function $\vp:H\mapsto \Phi^H_{\ell_1}/\Phi^H_{\ell_2}$, where $\Phi^H_\ell$ is defined in \eqref{eq:Phi}, is a diffeomorphism on $(0,\frac12)$. Assuming  \eqref{eq:notzero} and using the notations
	\begin{equation}\label{eq:betaH} 
		\beta(H)=e^{2q/(2H+1)^2}
	\end{equation}
	and 
	\begin{equation}\label{eq:wMH} \begin{split}
			w(M,H)&=(w(M,H)_1,\dots,w(M,H)_M)^T,\\
			w(M,H)_m&= \frac{w(M)_mm^{1/2+H}}{\sum_{m=1}^M w(M)_mm^{1/2+H}} \end{split}
	\end{equation}
	and 
	\begin{equation}\label{eq:ga-matrix} 
		\ga_\nu^{\ell,\ell'}(H)=(\ga_\nu^{\ell,\beta(H)m,\ell',\beta(H)m'}(H))_{m,m'=1}^M \in \R^{M\times M},\quad \nu=1,2,3,\quad \ell,\ell'\geq3,
	\end{equation}
	we have for any $H\in(0,\frac12)$ that
		\begin{equation}\label{eq:CLT-est} \begin{split}
			&	\del^{-1/(4H+2)}(\wh H_n - H) \\
			&~\limst N\biggl(0, \biggl(\frac{\vp(H)}{\vp'(H)}\biggr)^2\sum_{\iota,\iota'=1,2} \frac{(-1)^{\iota+\iota'}}{V^{\ell_\iota}_tV^{\ell_{\iota'}}_t} \sum_{\nu=1}^3  [w(M,H)^T\ga^{\ell_\iota,\ell_{\iota'}}_\nu(H)w(M,H)  ] \Ga_\nu(t) \biggr),
		\end{split}\!\!\!
	\end{equation}
	where $M=M(H)$ is the number from \eqref{eq:M} and the limit in \eqref{eq:CLT-est}
	is independent of $\calf'$.
\end{theorem}
\begin{proof} 
	By \eqref{eq:M-conv} and \eqref{eq:magic}, 
	it suffices to prove \eqref{eq:CLT-est} for
	\begin{equation}\label{eq:Hprime} 
		H'_n = \varphi^{-1}\biggl(\frac{\sum_{m=1}^{M} w(M)_m m^{1/2-\ov H_n} \wh V^{n,\ell_1,m  k^U_n}_t}{\sum_{m=1}^{M} w(M)_m m^{1/2-\ov H_n} \wh V^{n,\ell_2,m  k^U_n}_t}\biggr)
	\end{equation}
	instead of $\wh H_n$. And by the definition of stable convergence in law, it suffices to do so conditionally on $U$ as $U$ does not appear in the limit. 
Similarly to \eqref{eq:help3} and \eqref{eq:dec-2}, we have
	\begin{equation*} H'_n 
		= \varphi^{-1}\biggl(\frac{\sum_{m=1}^{M} w(M)_m m^{1/2-\ov H_n+2H}  \wt V^{n,\ell_1,m  k^U_n}_t}{\sum_{m=1}^{M} w(M)_m m^{1/2-\ov H_n+2H}  \wt V^{n,\ell_2,m  k^U_n}_t}\biggr)\end{equation*}
	and  we can find $(\zeta^{n,1}_t,\zeta^{n,2}_t)$ 
	such that 
	\begin{equation}\label{eq:dec} 
		\begin{split}
			&\del^{-1/(4H+2)}(H'_n-H)\\
			&\qquad	=\del^{-1/(4H+2)}\sum_{\iota=1,2} \partial_{x_\iota} \psi(\zeta^{n,1}_t,\zeta^{n,2}_t)\sum_{m=1}^M w(M)_m m^{1/2-\ov H_n+2H} \cala^{n,\ell_\iota,mk_n^U}_t \\
			&\quad\qquad+\begin{multlined}[t][0.75\textwidth]\sum_{\iota=1,2} \partial_{x_\iota} \psi(\zeta^{n,1}_t,\zeta^{n,2}_t)\sum_{m=1}^M w(M)_m m^{1/2-\ov H_n+2H}\\
				\times\del^{-1/(4H+2)}(\wt V^{n,\ell_\iota,mk_n^U}_t- \cala^{n,\ell_\iota,mk_n^U}_t-V^{\ell_\iota}_t).\end{multlined}
		\end{split}
	\end{equation}
	Conditionally on $U$, the sequence $k_n^U$ is  deterministic. Furthermore,
	since $q_n\sim q\log \del^{-1}$ and $\log \del^{-1}/r_n\to0$, we have   $k_n^U/\del^{-2H/(2H+1)} \to e^{2q/(2H+1)^{2}}$.
	By Theorem~\ref{thm:CLT-main}, we know that $(\del^{-1/(4H+2)}(\wt V^{n,\ell_\iota,mk_n^U}_t- \cala^{n,\ell_\iota,mk_n^U}_t-V^{\ell_\iota}_t))_{\iota=1,2,m=1,\dots,M}$ satisfies a joint CLT, so a tedious but  straightforward   computation shows that the second term on the right-hand side of \eqref{eq:dec} converges stably  to the right-hand side of \eqref{eq:CLT-est}. Analogously to how we proved \eqref{eq:toshow}, we can first use Proposition~\ref{prop:debias-2} to replace $m^{1/2-\ov H_n +2H}$ by $m^{1/2+H}$ and then apply Proposition~\ref{prop:debias} to show that  the first term on the right-hand side of \eqref{eq:dec} is $o_\P(\del^{1/(4H+2)})$, completing the proof.
\end{proof}

In order to make Theorems \ref{thm:CLT-main} and \ref{thm:est} feasible,
we need consistent estimators of $\Ga_1(t)$, $\Ga_2(t)$ and $\Ga_3(t)$ from \eqref{eq:terms}.
The following estimators are adapted from \cite[Theorem~8.12]{AJ}.
\begin{prop}\label{prop:asym}
	Let $\wh K_n= \wh k_n [\del^{-\la}]$, where $\wh k_n$ is defined in \eqref{eq:whkn} and $\la\in(0,\frac12)$. Moreover, define 
	\begin{equation}\label{eq:Xchat} \begin{split}
			\delta^{\prime n}_i X& = X_{i\wh K_n\del}-X_{(i-1)\wh K_n\del},\\ \delta^{\prime n}_i \wh c &= \frac{1}{\wh k_n\del}(\wh {\color{blue}c}^n_{(1+i\wh K_n)\del,\wh k_n\del}-\wh {\color{blue}c}^n_{(1+(i-1)\wh K_n)\del,\wh k_n\del})\end{split}
	\end{equation}
	and
	\begin{equation}\label{eq:var-est}\begin{split}
			\wh \Ga^n_1(t)	&= \frac{1}{9\del^3} \sum_{i=1}^{[t/(\wh K_n\del)]-1} ( \delta^{\prime n}_i X)^4(\delta^{\prime n}_{i+1} X)^4,\\
			\wh \Ga^n_2(t)	&= \frac{(\wh K_n\del)^{1-4\wh H_n}}{3} \sum_{i=1}^{[t/(\wh K_n\del)]-2} ( \delta^{\prime n}_i \wh c)^4,\\
			\wh \Ga^n_3(t)	&=  \frac{(\wh K_n\del)^{-1-2\wh H_n}}{3} \sum_{i=1}^{[t/(\wh K_n\del)]-2}  (\delta^{\prime n}_{i} \wh c)^2(\delta^{\prime n}_{i+1} X)^4.
	\end{split}\end{equation}
	Then under the assumptions of Theorem~\ref{thm:est}, we have 
	$ \wh \Ga^n_\nu  \stackrel{\P}{\Longrightarrow} \Ga_\nu$ for each $\nu=1,2,3$.
\end{prop}
\begin{proof} 
	Let $\delta^{\prime n}_i \wt c$ and $\wt \Ga^n_\nu(t)$, $\nu=1,2,3$, be defined in the same way as the corresponding quantities in \eqref{eq:Xchat} and \eqref{eq:var-est} except that $\wh k_n$ and $\wh K_n$ are replaced by some deterministic sequences $k_n\sim\theta\del^{-2H/(2H+1)}$ and $K_n\sim \Theta \del^{-2H/(2H+1)-\la}$ with $\theta,\Theta>0$. 
	Similarly to \eqref{eq:magic}, we have $\P(\wh K_n = K_n^U)=1$, where $K_n^U \sim \Theta' \del^{-2H/(2H+1)-\la}$ for some $\Theta'>0$ (and almost all realizations of $U$). Thus, it suffices to show
	$$ \wt \Ga_1^n  \limL \Ga_1,  \quad 	\wt \Ga_2^n\limL \Ga_2,\quad	\wt \Ga^n_3\limL \Ga_3, $$
	assuming Assumption~\ref{ass:CLT'}. 
	The first convergence is a consequence of  \cite[Theorem~8.4.1]{JP}. 
	
	For the remaining two, 
	we make the following observation: by \eqref{eq:J},  we have that 
	\begin{align*}
		\delta^{\prime n}_i \wt c-\delta^{\prime n}_i  c &= J^n_{1,1+iK_n}-J^n_{1,1+(i-1)K_n} +\frac{1}{k_n\del} \int_{iK_n\del}^{(iK_n+k_n)\del} (c_s-c_{iK_n\del}) ds  \\
		&\quad +\frac{1}{k_n\del} \int_{(i-1)K_n\del}^{((i-1)K_n+k_n)\del} (c_s-c_{(i-1)K_n\del}) ds. 
	\end{align*}
	It is not hard to see from the definition that $J^n_i$ is of size $k_n^{-1/2}$, uniformly in $i$. Moreover, the last two terms on the right-hand side of the previous display are of size $(k_n\del)^H$, uniformly in $i$. Therefore, if we define $\Ga^n_2(t)$ and $\Ga^n_3(t)$ in the same way as $\wt \Ga^n_2(t)$ and $\wt \Ga^n_3(t)$ but with $\delta^{\prime n}_i \wt c$ replaced by $\delta^{\prime n}_i  c$, then 
	$$ \E\biggl[\sup_{t\in[0,T]} \bigl\{\lvert \wt \Ga^n_2(t)-\Ga^n_2(t)\rvert + \lvert \wt \Ga^n_3(t) - \Ga^n_3(t)\rvert\bigr\}\biggr] \lec (\wh K_n\del)^{-H}(k_n^{-1/2}+(k_n\del)^H)\to0 $$
	as $n\to\infty$. Consequently, it remains to show
	$\Ga^n_2\limL \Ga_2$ and $\Ga^n_3\limL \Ga_3$. The first convergence
	was shown in \cite[Theorem 3]{BN11}, while the second is easily obtained from standard techniques of high-frequency statistics (involving drift removal, localization of $\si$, $\eta$ and $\wh\eta$, and a LLN in the case where $X$ is a Brownian motion and $c$ is a fractional Brownian motion) and the fact that $\E[(W^H_1)^2(W_2-W_1)^4]=\E[(\wh W^H_1)^2(W_2-W_1)^4]=3$.
\end{proof}

This could have been the end of our construction of a rate-optimal and feasible estimator of $H$ if it was not for a crucial detail that we have overlooked so far. It turns out that \emph{all} estimators considered in this section (including $\wh H_n$) break down if $H=\frac12$, that is, if volatility is not rough but just a semimartingale. This is because $\Phi^{1/2}_\ell=0$ for any $\ell\geq3$, which implies by Theorem~\ref{thm:CLT-main} that  $(\wt V^{n,\ell_1,\wt k_n}_t,\wt V^{n,\ell_2,\wt k_n}_t)$  for $\ell_1,\ell_2\geq3$ converges in law to a bivariate mixed normal distribution. In particular, the ratio $\wt V^{n,\ell_1,\wt k_n}_t/\wt V^{n,\ell_2,\wt k_n}_t$ and thus the estimator $\wt H_n$ converges in distribution (not in probability) to a random variable with a density. In other words, na\"ively applying $\wt H_n$ if $H=\frac12$ can output any value in the interval $(0,\frac12)$  just by chance! 

There are at least two ways of remedying this problem. One possibility is to choose $\ell_2\in\{0,1\}$ in \eqref{eq:prelim}, which ensures that $\Phi^{1/2}_{\ell_2}\neq0$.  But in this case, the latent bias term $\cala^{n,\ell,k_n}_t$ from \eqref{eq:bias} (which will  have a slightly different form) involves the function $v\mapsto \Delta^3_1(\ell_2-1-v)$, which is no longer smooth in $v\in[0, \frac32]$. This has the consequence that the debiasing procedure from Proposition~\ref{prop:debias} has to be modified. We propose a different, quicker, solution. Loosely speaking, we first use the limit theory of $\wt V^{n,\ell_2,\wt k_n}_t$ to test whether $H=\frac12$ and only use $\wh H_n$ if $H=\frac12$ is rejected. More precisely, we define
\begin{equation}\label{eq:finalest} 
	H_n=\wh H_n \bone_{\calr_n}(\wh V^{n,\ell_2,\wt k_n}_t)+\frac12 \bone_{\R\setminus\calr_n}(\wh V^{n,\ell_2,\wt k_n}_t),
\end{equation}
where
\begin{equation}\label{eq:Rn} 
	\calr_n=\biggl\{ x\in\R: \lvert x\rvert> \del^{3/4}\log\del^{-1}\biggl(\sum_{\nu=1}^3 \ga_\nu^{\ell_2,1,\ell_2,1}(\tfrac12)\wh \Ga^n_\nu(t)\biggr)^{1/2} \biggr\}
\end{equation}
and $\wh \Ga^n_\nu(t)$ is defined in \eqref{eq:var-est}.
\begin{theorem}\label{thm:final}
	Under the assumptions of Theorem~\ref{thm:est} and Proposition~\ref{prop:asym}, we have  
	\[\begin{dcases}
		\lim_{n\to\infty} \P(H_n=\wh H_n)=1	& \text{if } H\in(0,\tfrac12),\\
		\lim_{n\to\infty} \P(H_n=\tfrac12)=1	& \text{if } H=\tfrac12.
	\end{dcases}  \]
	In particular, if $H\in(0,\frac12)$, \eqref{eq:CLT-est} continues to hold with $H_n$  instead of $\wh H_n$.
\end{theorem}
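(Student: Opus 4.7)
My plan is to reduce the theorem to a comparison between the test statistic $\wh V^{n,\ell_2,\wt k_n}_t$ and the threshold $\del^{3/4}\log\del^{-1}\tau_n$, with $\tau_n=(\sum_{\nu=1}^3 \ga_\nu^{\ell_2,1,\ell_2,1}(\tfrac12)\wh\Ga^n_\nu(t))^{1/2}$, and to handle the two regimes $H\in(0,\tfrac12)$ and $H=\tfrac12$ separately. Since by construction $H_n=\wh H_n$ exactly on $\{\wh V^{n,\ell_2,\wt k_n}_t\in\calr_n\}$ and $H_n=\tfrac12$ on its complement, both dichotomous claims reduce to controlling these event probabilities. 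The ``in particular'' statement is then immediate: on $\{H_n=\wh H_n\}$, whose probability tends to $1$ when $H\in(0,\tfrac12)$, the stable convergence \eqref{eq:CLT-est} for $\wh H_n$ from Theorem~\ref{thm:est} transfers to $H_n$ by a routine Slutsky-type argument.

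For $H\in(0,\tfrac12)$, the fact that $\vp$ is a diffeomorphism on $(0,\tfrac12)$ forces $\Phi^H_{\ell_2}\neq 0$, and Theorem~\ref{thm:CLT-main} (via the LLN implicit in Proposition~\ref{prop:approx2}) yields $\wt V^{n,\ell_2,\wt k_n}_t\stackrel{\P}{\longrightarrow} V^{\ell_2}_t=\Phi^H_{\ell_2}\int_0^t(\eta_s^2+\wh\eta_s^2)\,ds$, which is positive a.s.\ by \eqref{eq:notzero}. Therefore
\begin{equation*}
|\wh V^{n,\ell_2,\wt k_n}_t| = (\wt k_n\del)^{2H}|\wt V^{n,\ell_2,\wt k_n}_t| \sim \del^H V^{\ell_2}_t.
\end{equation*}
Since $H<\tfrac12<\tfrac34$, we have $\del^H\gg \del^{3/4}\log\del^{-1}$, and since $\tau_n$ is bounded in probability by Proposition~\ref{prop:asym}, we conclude that $\wh V^{n,\ell_2,\wt k_n}_t\in\calr_n$ with probability tending to $1$.

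For $H=\tfrac12$, I would first verify two crucial cancellations. First, $\Phi^{1/2}_{\ell_2}=0$, since the numerator in the expanded form of $\Phi^H_{\ell_2}$ in \eqref{eq:Phi} is the fourth central difference of the cubic $t\mapsto t^3$, which vanishes identically (being a finite difference of order $4$ applied to a polynomial of degree $3$). Second, $\cala^{n,\ell_2,\wt k_n}_t=0$, because the argument of $\Delta^3_1(\cdot)_+^{H+1/2}$ in \eqref{eq:bias} lies in $(\ell_2-2,\ell_2+2]\subset(0,\infty)$ for $\ell_2\geq 3$, so the $_+$ can be dropped and we are left with $\Delta^3_1$ applied to a degree-$1$ polynomial, which is zero. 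Applying Theorem~\ref{thm:CLT-main} at $H=\tfrac12$ with $\kappa=\tfrac12$ then gives $\wt V^{n,\ell_2,\wt k_n}_t=O_\P(\del^{1/4})$ and
\begin{equation*}
|\wh V^{n,\ell_2,\wt k_n}_t| = \wt k_n\del\cdot|\wt V^{n,\ell_2,\wt k_n}_t| = O_\P(\del^{3/4}).
\end{equation*}
Dividing by the threshold and using that $\tau_n$ is bounded away from zero in probability (since $\ga_2^{\ell_2,1,\ell_2,1}(\tfrac12)>0$, a variance entry of the nondegenerate Gaussian limit, and $\wh\Ga^n_2(t)\to\Ga_2(t)>0$), we obtain $|\wh V^{n,\ell_2,\wt k_n}_t|/(\del^{3/4}\log\del^{-1}\tau_n) = O_\P((\log\del^{-1})^{-1})\to 0$, so $\wh V^{n,\ell_2,\wt k_n}_t\notin\calr_n$ with probability $\to 1$, i.e.\ $H_n=\tfrac12$.

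The main obstacle I anticipate is justifying the asymptotics of $\wh\Ga^n_\nu(t)$ at $H=\tfrac12$: Proposition~\ref{prop:asym} is stated under Theorem~\ref{thm:est}'s hypothesis $H\in(0,\tfrac12)$, and at $H=\tfrac12$ the pilot $\ov H_n$ only converges in distribution (not in probability) to an interior point, so the oracle property \eqref{eq:magic} used in the proof of Proposition~\ref{prop:asym} must be revisited. I would address this either by revisiting that proof to show that it goes through when $\ov H_n$ is merely tight in a compact subset of $(0,\tfrac12)$ (the underlying high-frequency LLNs in \cite{JP} and \cite{BN11} are robust to such fluctuations in the data-driven window size), or by noting that for the purpose of validating the threshold we only need the weaker two-sided bound $0<\epsilon\leq \tau_n\leq K<\infty$ with probability tending to $1$, which can be extracted directly from moment estimates on $\wh\Ga^n_\nu(t)$ at $H=\tfrac12$ without invoking the full consistency statement.
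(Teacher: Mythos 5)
Your proposal is correct and follows essentially the same route as the paper: both reduce the theorem to comparing $\wh V^{n,\ell_2,\wt k_n}_t$ with the threshold in \eqref{eq:Rn}, using Theorem~\ref{thm:CLT-main} (with $\Phi^{1/2}_{\ell_2}=0$ and negligible bias) to get $\wh V^{n,\ell_2,\wt k_n}_t=O_\P(\del^{3/4})$ when $H=\tfrac12$, and the nondegenerate limit of order $\del^{H}\gg \del^{3/4}\log\del^{-1}$ when $H\in(0,\tfrac12)$, with Proposition~\ref{prop:asym} supplying the studentization and the transfer of \eqref{eq:CLT-est} to $H_n$ on an event of probability tending to one. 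The applicability of Proposition~\ref{prop:asym} at $H=\tfrac12$, which you flag as the main obstacle, is simply invoked without comment in the paper's proof (the theorem is stated under that proposition's assumptions), so your extra caution there goes beyond, but does not diverge from, the paper's argument.
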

\begin{proof}
	If $H=\frac12$, note that by Theorem~\ref{thm:CLT-main}, $\del^{-3/4}\wh V^{n,\ell_2,\wt k_n}_t$ converges stably in law to a centered normal with conditional variance $\sum_{\nu=1}^3 \ga_\nu^{\ell_2,1,\ell_2,1}(\frac12)\Ga_\nu(t)$. Thus, by Proposition~\ref{prop:asym}, $\calv_n=\del^{-3/4}\wh V^{n,\ell_2,\wt k_n}_t/(\sum_{\nu=1}^3 \ga_\nu^{\ell_2,1,\ell_2,1}(\frac12)\wh \Ga_\nu(t))^{1/2}\stackrel{d}{\longrightarrow} N(0,1)$, so $\P(H_n=\frac12)=\P(\lvert\calv_n\rvert \leq \log \del^{-1})\to1$. Similarly, if $H\in(0,\frac12)$, we know from Theorem~\ref{thm:CLT-main} that $$\calv'_n=\del^{1/2-H}\calv_n\frac{\bigl(\sum_{\nu=1}^3 \ga_\nu^{\ell_2,1,\ell_2,1}(\frac12)\wh \Ga_\nu(t)\bigr)^{1/2}}{(\ga_2^{\ell_2,1,\ell_2,1}(H)\wh \Ga_2(t))^{1/2}}\stackrel{d}{\longrightarrow} N(0,1),$$ which shows that 
	\begin{equation*}
		\P(H_n=\wh H_n)=\P\biggl(\lvert \calv'_n\rvert>\del^{1/2-H}\log\del^{-1}\frac{\bigl(\sum_{\nu=1}^3 \ga_\nu^{\ell_2,1,\ell_2,1}(\frac12)\wh \Ga_\nu(t)\bigr)^{1/2}}{(\ga_2^{\ell_2,1,\ell_2,1}(H)\wh \Ga_2(t))^{1/2}}\biggr)\to1.\qedhere
	\end{equation*}
\end{proof}

\section{Simulation study}\label{sec:sim}

We evaluate the performance of the estimator $\wt H_n$ from \eqref{eq:prelim} in a Monte Carlo simulation. While $\wt H_n$ only achieves a rate of convergence of $\delta_n^{-1/4}$, it does not need any debiasing or fine tuning. We postpone the numerical analysis and calibration of our rate-optimal estimator $H_n$ from \eqref{eq:finalest} to future work. For our simulation study, we consider the rough Heston model
\begin{equation}\label{eq:Heston}\begin{split}
x_t&= x_0+\int_0^t\sqrt{c_s} dW_s, \\
c_t&=c_0+\frac{1}{\Ga(H+1/2)}\int_0^t (t-s)^{H-1/2}(\theta-\la c_s)ds \\
&\quad+ \frac{\nu}{\Ga(H+1/2)}\int_0^t (t-s)^{H-1/2}\sqrt{c_s}(\rho dW_s+\sqrt{1-\rho^2}dW'_s),
\end{split}\end{equation}
where $W$ and $W'$ are two independent Brownian motions,
\begin{equation}\label{eq:param}
x_0=1, \quad V_0=\theta=0.02/252, \quad \la=0.3,\quad \nu=0.3/\sqrt{252}, \quad\rho=-0.7
\end{equation}
are the same parameters as in Section~4.2 of \cite{AE19} and 
\begin{equation}\label{eq:H}
H\in\{0.1,0.2,0.3,0.4,0.49\}.
\end{equation}
For each value of $H$, 
we simulate 5{,}000 samples of $\{x_{i/n}: i=1,\dots, n\}$ for $n=23{,}400$ (which corresponds to sampling every second within one trading day) through the following procedure: we first use the multifactor approximation of \cite{AE19} (with 500 factors) to simulate $\{c_{j/m}:j=1,\dots,m\}$ at $m=10n = 234{,}000$ points within $[0,1]$. In order to obtain the correct roughness, we then use the hybrid scheme of \cite{BLP17} to simulate $\{c_{j/m}:j=1,\dots,m\}$ again, using the previously simulated values of $c$ as input on the right-hand side of the $c$-equation in \eqref{eq:Heston}. From those values of $c$, we obtain prices from a Riemann sum approximation and finally $\{x_{i/n}: i=1,\dots, n\}$ by subsampling at every 10th point of this price series.

Table~\ref{tab:1} reports the performance of a version of the estimator $\wt H_n$ from \eqref{eq:prelim} in which $\ell_1=1$ and $\ell_2=0$ and the $V$-statistics are replaced by the bias-corrected ones from \eqref{eq:01}.  Each estimate is obtained from using a rolling window of 20 simulated paths, which in practice roughly corresponds to taking monthly averages. We can see that the estimator $H$ is essentially unbiased for all considered values of $H$, except when $H=0.1$, where the estimator shows a small upward bias. This is in accordance with the fact that the asymptotic bias term $\cala^{n,\ell,k_n}_t$ from \eqref{eq:bias} is $O_\P(k_n^{-1/2-H})=O_\P(\delta_n^{1/4+H/2})$ by Lemma~\ref{lem:Q3} and therefore only marginally smaller than the $\delta_n^{1/4}$-fluctuations of the estimator $\wt H_n$, giving rise to a small finite-sample bias. Also, for all values of $H$, the variance of $\wt H_n$ is acceptable but nontrivial, suggesting that further fine-tuning (e.g., by considering more than one lag and by using an adaptive window size to achieve optimal rate of convergence) can improve the performance of $\wt H_n$.

\begin{table*}
\caption{Realizations of $\wt H_n$ based on a rolling window of 20 out of 5{,}000 simulated paths.}
\label{tab:1}
\begin{tabular}{@{}llllll@{}}
 & \multicolumn{5}{c}{Quantile}\\
\hline 
$H$ & \multicolumn{1}{c}{2.5\%}&\multicolumn{1}{c}{25\%}
& \multicolumn{1}{c}{50\%} & \multicolumn{1}{c}{75\%}&\multicolumn{1}{c}{97.5\%}
  \\
\hline
0.1  & 0.0452 &0.0963& 0.1270& 0.1568&0.2120  \\
0.2    & 0.1216 &0.1734 &0.1976 &0.2263& 0.2865  \\
0.3  & 0.1756 &0.2668 &0.2981 &0.3289 &0.4017 \\
0.4   & 0.2523 &0.3572 &0.4033 &0.4579& 0.6115\\
0.49    & 0.1469 &0.3964 &0.4910 &0.5889 &0.8148 \\
\hline
\end{tabular}
\end{table*}

\section{Proof of Theorem~\ref{thm:CLT-main}}\label{sec:3}

The proof of Theorem~\ref{thm:CLT-main} essentially consists of two parts: an approximation step (see Section~\ref{sec:CLT1}), where we isolate terms that contribute to the limit $\calz$ in \eqref{eq:CLT}, and a CLT step (see Section~\ref{sec:CLT2}), where we actually prove their stable convergence in law to $\calz$. 

Let us start with a remark about drifts: by the stochastic  and ordinary Fubini theorem,
\begin{align*}
	\int_0^t g_0(t-s)\eta_s dW_s &= \int_0^t\int_s^t g'_0(r-s)dr\eta_sdW_s= \int_0^t \int_0^r g'_0(r-s)\eta_s dW_s dr, \\
	\int_0^t \wt g_0(t-s)\wt\eta_s ds&= \int_0^t \int_s^t \wt g'_0(r-s)dr \wt\eta_s ds = \int_0^t\int_0^r\wt g'_0(r-s)\wt\eta_s ds dr,
\end{align*}
and similarly for other integrals,
so we can rewrite \eqref{eq:si} and \eqref{eq:eta} as 
\begin{equation}\label{eq:ci}\begin{split}
		c_t&=c_0+A_t + \int_0^t g_H(t-s) \bta_sd\bW_s,\qquad	\eta^2_t=\eta^2_0+A^\eta_t + \int_0^t g_{H_\eta}(t-s)\btheta_sd\bar \bW_s,\\
		\wh\eta_t^2&=\wh\eta_0^2+A^{\wh\eta}_t + \int_0^t g_{H_{\wh\eta}}(t-s)\bvartheta_sd\bar \bW_s,
	\end{split}
\end{equation}
where  $\bta_t=(\eta_t,\wh \eta_t)$, $\bW_t= (W_t,\wh W_t)^T$ and
\begin{equation}\label{eq:Ai}\begin{split}
		A_t&=\int_0^t \biggl(a_s+\int_0^s g_0'(s-r)\bta_rd\bW_r+\int_0^s\wt g'_0(s-r)\wt\eta_r dr\biggr) ds + \int_0^t g_{\wt H}(t-s)\wt \eta_s ds,\\
		A^\eta_t&=\int_0^t \biggl(a^\eta_s+\int_0^s (g^\eta_0)'(s-r)\btheta_rd\bar \bW_r+\int_0^s(\wt g^\eta_0)'(s-r)\wt\theta_r dr\biggr) ds + \int_0^t g_{\wt H_\eta}(t-s)\wt \theta_s ds,\\
		A^{\wh\eta}_t&=\int_0^t \biggl(a^{\wh\eta}_s+\int_0^s (g^{\wh\eta}_0)'(s-r)\bvartheta_rd\bar \bW_r+\int_0^s(\wt g^{\wh\eta}_0)'(s-r)\wt\vartheta_r dr\biggr) ds + \int_0^t g_{\wt H_{\wh\eta}}(t-s)\wt \vartheta_s ds.
	\end{split}
\end{equation}
In the last display, the processes in parentheses are all locally bounded, and so are $\wt \eta$, $\wt \theta$ and $\wt \vartheta$. Therefore, there is no loss of generality to assume
\begin{equation}\label{eq:assump} 
	g_0\equiv \wt g_0\equiv g_{0}^\eta\equiv \wt g^\eta_{0}\equiv g_{0}^{\wh \eta} \equiv \wt g_{0}^{\wh \eta}\equiv0,\qquad  \begin{cases} \wt H,\wt H_\eta, \wt H_{\wh \eta}\in(0,\frac12) &\text{if } H\in(0,\frac12),\\ \wt \eta\equiv\wt\theta\equiv\wt\vartheta\equiv0&\text{if } H=\frac12.\end{cases}
\end{equation}
In addition, as it is usual when infill asymptotics are considered, Assumption~\ref{ass:CLT} can be localized (cf.\ \cite[Lemma~4.4.9]{JP}). Therefore, there is no loss of generality if we assume the following strengthened hypotheses.
\settheoremtag{CLT'}
\begin{Assumption}\label{ass:CLT'}
	In addition to Assumption~\ref{ass:CLT}, we have \eqref{eq:assump} and there is a deterministic constant $K\in(0,\infty)$ such that 
	\begin{equation}\label{eq:asbound} 
		\sup_{t\in  [0,\infty)} \bigl\{\lvert a_t\rvert+ \lvert a^\eta_t\rvert+\lvert a^{\wh\eta}_t\rvert+  \lvert b_t\rvert + \lvert \wt\eta_t\rvert + \lvert \wt \theta_t\rvert+ \lvert \wt\vartheta_t\rvert +\lvert \btheta_t\rvert + \lvert \bvartheta_t\rvert \bigr\}<K\quad \text{a.s.}
	\end{equation}  
	In particular, all processes appearing in \eqref{eqn.s}, \eqref{eq:si} and \eqref{eq:eta} have   uniformly bounded moments of all orders.  In addition, for all $p>0$, there is a constant $K_p\in(0,\infty)$ such that 
	\begin{equation}\label{eq:b-2} 
		\lim_{h\to0}	\sup_{s,t\in[0,\infty):\lvert s-t\rvert\leq h} \bigl\{  \E[  \lvert a_t-a_s\rvert^p] +\E[  \lvert b_t-b_s\rvert^p] \bigr\}= 0 
	\end{equation}
	and
	\begin{equation}\label{eq:Holder-2} 
		\sup_{s,t\in[0,\infty)}\bigl\{\E[ \lvert \wt\eta_t-\wt\eta_s\rvert^p]^{1/p}+\E[ \lvert \wt\theta_t-\wt\theta_s\rvert^p]^{1/p}+\E[ \lvert \wt\vartheta_t-\wt\vartheta_s\rvert^p]^{1/p}\bigr\}\leq K_p\lvert t-s\rvert^H.
	\end{equation}
\end{Assumption}

\subsection{Main decomposition and approximations}\label{sec:CLT1}

Since the arguments can be applied component by component, there is no loss of generality to assume $d=1$ in this subsection. For brevity, we also write $\ell=\ell_1$, $k_n=k_n^{(1)}$,  $\theta=\theta^{(1)}$ and $\wt V^{n,\ell}_t=\wt V^{n,\ell,k_n}_t$. 
In a first step, write
\begin{equation}\label{eq:J} 
	\hat{c}^{n}_{i  \delta_{n}, k_n\delta_{n}} 
	=  J_{1,i}^{n}+J_{2,i}^{n} 
	, 
\end{equation}
where
\begin{align*}
	J_{1, i}^n  = & \frac{1}{k_{n}\delta_{n}} 
	\sum_{j=0}^{ k_{n}-1 } 
	\biggl(
	(\delta^{n}_{i+j} x )^{2}  
	- 
	\int_{(i+j-1)\delta_{n}}^{(i+j)\delta_{n}} c_{s}ds  \biggr),
	\\
	J_{2, i}^n  = & \frac{1}{k_{n} \delta_{n} } 
	\sum_{j=0}^{ k_{n} -1} 
	\int_{(i+j-1)\delta_{n}}^{(i+j)\delta_{n}} c_{s}ds = \frac{1}{k_{n}\delta_{n}} \int_{(i-1)\delta_{n}}^{(i-1+k_{n})\delta_{n}} c_{s}ds 
	=\frac{C_{(i-1+k_{n})\delta_{n}} - C_{(i-1)\delta_{n}}}{k_{n}\delta_{n}} , 
\end{align*}
and $C_{t} = \int_{0}^{t}c_{s}ds=\int_0^t \si_s^2ds$ is the integrated volatility. The decomposition \eqref{eq:J} shows that the spot variance estimator $\wh c^n_{i\del,k_n\del}$ is first and foremost an estimator of $J^n_{2,i}$, a local average of spot variance (with $J^n_{1,i}$ being the estimation error). 
With this decomposition, we    have
\begin{equation}\label{eq:Z} \begin{split}
		&\bigl(    \hat{c}^{n}_{(i+k_{n})  \delta_{n}, {k_n\delta_n}}   -   \hat{c}^{n}_{i \delta_{n}, {k_n\delta_n}} \bigr ) \bigl(  \hat{c}^{n}_{(i+(\ell+1)k_{n})  \delta_{n}, {k_n\delta_n}}   -      \hat{c}^{n}_{(i+\ell k_n) \delta_{n}, {k_n\delta_n}} \bigr ) \\
		&\qquad=	Z^{n,\ell} _{1, i} + Z^{n,\ell} _{2, i}+ Z^{n,\ell}_{3, i}+Z^{\prime n, \ell}_{3,i},\end{split}
\end{equation}
where
\begin{align*}
	Z^{n,\ell}_{1,i}&=(J^{n} _{1, i+k_{n}}- J^{n} _{1, i})(J^{n} _{1, i+(\ell+1)k_{n}}-J^{n} _{1, i+\ell k_n}),
	\\ Z^{n,\ell}_{2,i}
	&=  (J^{n} _{2, i+k_{n}}- J^{n} _{2, i})(J^{n} _{2, i+(\ell+1)k_{n}}-J^{n} _{2, i+\ell k_n}), \\
	Z^{n,\ell}_{3, i}&= (J^{n} _{1, i+k_{n}}- J^{n} _{1, i}) (J^{n} _{2, i+(\ell+1)k_{n}}-J^{n} _{2, i+\ell k_n}),
	\\ Z^{\prime n, \ell}_{3,i}&=(J^{n} _{2, i+k_{n}}- J^{n} _{2, i})(J^{n} _{1, i+(\ell+1)k_{n}}-J^{n} _{1, i+\ell k_n}).
\end{align*}
Correspondingly, we obtain the decomposition
\begin{equation}\label{eq:V-dec} 
	\wt V^{n,\ell}_t=Z_1^{n,\ell}(t)+Z_2^{n,\ell}(t)+Z_3^{n,\ell}(t)+Z_3^{\prime n,\ell}(t),
\end{equation}
where 
\begin{equation}\label{eq:Zt} \begin{split}
		Z^{n,\ell}_{1\mid 2}(t)&= \frac{(k_n\del)^{1-2H}}{k_n} \sum_{i=1}^{[t/\del]-(\ell+2)k_n+1} (J^n_{1\mid 2,i+k_n}-J^n_{1\mid 2,i})(J^n_{1\mid 2,i+(\ell+1)k_n}-J^n_{1\mid 2,i+\ell k_n}),\\
		Z^{n,\ell}_3(t)&= \frac{(k_n\del)^{1-2H}}{k_n} \sum_{i=1}^{[t/\del]-(\ell+2)k_n+1} (J^n_{1,i+k_n}-J^n_{1,i})(J^n_{2,i+(\ell+1)k_n}-J^n_{2,i+\ell k_n}),\\
		Z^{\prime n,\ell}_3(t)&= \frac{(k_n\del)^{1-2H}}{k_n} \sum_{i=1}^{[t/\del]-(\ell+2)k_n+1} (J^n_{2,i+k_n}-J^n_{2,i})(J^n_{1,i+(\ell+1)k_n}-J^n_{1,i+\ell k_n}),\end{split}  
\end{equation}
and $1\mid 2$ means that we can take either $1$ or $2$ (consistently for the whole line).

We can interpret $Z^{n,\ell}_1$ and $Z^{n,\ell}_2$ as  (a suitably normalized version of) the realized autocovariance of increments of the error term $J^n_{1,i}$ and the signal term $J^n_{2,i}$, respectively, while $Z^{n,\ell}_3$ and $Z^{\prime n,\ell}_3$ are cross-covariances of signal and noise.
With   decomposition \eqref{eq:Zt} in hand, we can now give an informal argument why $\kappa=\frac{2H}{2H+1}$ is the optimal window size in our estimation procedure. To this end, we consider $Z^{n,\ell}_1$ and $Z^{n,\ell}_2$ in more detail. 
By the integration by parts formula for semimartingales,
\begin{equation}\label{eq:Jn1} \begin{split}
		J^n_{1,i}&=\frac{2}{k_n\del} \sum_{j=0}^{k_n-1} \int_{(i+j-1)\del}^{(i+j)\del} (x_s-x_{(i+j-1)\del}) dx_s\\ &= \frac{2}{k_n\del}  \int_{(i-1)\del}^{(i+k_n-1)\del} (x_s-x_{[s/\del]\del}) dx_s, \end{split}
\end{equation} 
which shows that $J^n_{1,i}$ is a term of order $k_n^{-1/2}$, uniformly in $i$. Moreover, if we neglect the drift in $x$, then $J^n_{1,i}$ is a martingale increment with step size $k_n\del$, and so is  $J^n_{1,i+k_n}-J^n_{1,i}$, but with step size $2k_n\del$. Because $\ell\geq2$, it follows that 
the sum over $i$ in $Z^{n,\ell}_1$ can be written as $O(k_n)$ many martingale sums of $O((k_n\delta_n)^{-1})$ many terms, where each of these terms is of size $O_\P(k_n^{-1})$. Thus, 
$Z^{n,\ell}_1$ is of order $O_\P((k_n\del)^{1/2-2H}k_n^{-1})$,  
which is small if $k_n$ is large. 

At the same time,  $(k_n\del)^{-H}(J^n_{2,i+k_n}-J^n_{2,i})$ is a normalized second-order increment of $C$ over an interval of length $k_n\del$. Therefore, $Z^{n,\ell}_2$
is nothing else but the normalized second-order quadratic variation of $C$ (computed with a lag $\ell$). By definition, $C$ is the integral of a fractional process. It is well known from \cite{BN11,BN13} that the normalized higher-order quadratic variation of a fractional process, computed over a step size of $k_n\del$, converges to a limit at rate $(k_n\del)^{-1/2}$, for all  $H\in(0,\frac12]$. Of course, $C$ is not a fractional process but rather its integral. Our analysis of $Z^{n,\ell}_2$ below shows that the rate of convergence remains unchanged. In other words, the fluctuations of the signal term $Z^{n,\ell}_2$ are small if $k_n$ is small. 

Therefore, nonchalantly ignoring the mixed terms $Z^{n,\ell}_3$ and $Z^{\prime n,\ell}_3$, we obtain the optimal convergence rate if $\kappa$ is chosen such that    $\max\{  (k_n\del)^{1/2-2H}k_n^{-1},(k_n\del)^{1/2} \} $ is minimized. This precisely gives $\kappa=\frac{2H}{2H+1}$ and an optimal rate of convergence of $\del^{-1/(4H+2)}$.


The following three propositions determine the main parts of $Z^{n,\ell}_1(t)$, $Z^{n,\ell}_2(t)$, $Z^{n,\ell}_3(t)$ and $Z^{\prime n,\ell}_3(t)$ that contribute to the CLT. 
\begin{prop}\label{prop:approx1}
	Let  the assumptions be as in  Theorem~\ref{thm:CLT-main}. 
	Then for all $\kappa\in[\frac{2H}{2H+1},\frac12]$ and integer sequences $k_n\sim\theta \del^{-\kappa}$ with $\theta>0$, the following convergence holds: 
	\begin{eqnarray}\label{eqn.zm}
		(k_n\del)^{-1/2}(Z^{n,\ell}_1(t) -M^{n,\ell}_1(t) ) \limL 0,
	\end{eqnarray}
	where using the notations  $y_t=\int_0^t \si_s dW_s$ and  $\chi(t)=-1$ for $t\in[0,\frac12]$ and $\chi(t)=1$ for $t\in(\frac12,1]$, we define
	\begin{equation}\label{eq:M1} \begin{split}
			M^{n,\ell}_1(t)&=\frac{4(k_n\del)^{-1-2H}}{k_n}\sum_{i=1}^{[t/\delta_{n} ]-(\ell+2)k_{n}+1}				\int_{(i-1)\del}^{(i-1+2k_n)\del} \chi(\tfrac{[s/\del]-i+1}{2k_n-1})(y_{s}-y_{[s/\del]\del}) dy_s\\
			&\quad\times \int_{(i+\ell k_n-1)\del}^{(i-1+(\ell+2)k_n)\del} \chi(\tfrac{[s/\del]-i-\ell k_n+1}{2k_n-1})(y_{s}-y_{[s/\del]\del}) dy_s.
		\end{split}\raisetag{-3\baselineskip}
	\end{equation}
	%
	If $\kappa\in(\frac{2H}{2H+1},\frac12]$, we further have 
	\begin{eqnarray}\label{eqn.M1}
		(k_n\del)^{-1/2}M^{n,\ell}_1(t)\limL 0.
	\end{eqnarray}
\end{prop}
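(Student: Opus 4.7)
The plan is to prove \eqref{eqn.zm} and \eqref{eqn.M1} in turn, both built on the integration-by-parts representation \eqref{eq:Jn1} of $J^n_{1,i}$ and on the martingale structure of $y_t = \int_0^t \sigma_s\,dW_s$. The guiding idea is that $M^{n,\ell}_1$ is precisely what one obtains from $Z^{n,\ell}_1$ by substituting $x$ with its continuous martingale part $y$ in both the integrand $x_s-x_{[s/\del]\del}$ and the integrator $dx_s$; the $\pm 1$ weight $\chi(\cdot)$ in \eqref{eq:M1} simply encodes the two opposite-sign first differences $J^n_{1,i+k_n}-J^n_{1,i}$ and $J^n_{1,i+(\ell+1)k_n}-J^n_{1,i+\ell k_n}$.

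For \eqref{eqn.zm} I would write $x_s = y_s + B_s$ with $B_s = \int_0^s b_r\,dr$ and expand $(x_s-x_{[s/\del]\del})\,dx_s$ bilinearly into the main piece $(y_s-y_{[s/\del]\del})\,dy_s$ and three cross-terms involving either $B_s-B_{[s/\del]\del}$ or $b_s\,ds$. Substituting into $J^n_{1,i+k_n}-J^n_{1,i}$ yields the $y$-analogue appearing in $M^{n,\ell}_1$ plus error integrals. Under Assumption~\ref{ass:CLT'} one has $|B_s-B_{[s/\del]\del}|\le K\del$ and $\|y_s-y_{[s/\del]\del}\|_{L^p}=O(\sqrt{\del})$, so each cross-term is smaller by at least a factor $\sqrt{\del}$ than the corresponding martingale piece. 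Expanding the double product $(J^n_{1,i+k_n}-J^n_{1,i})(J^n_{1,i+(\ell+1)k_n}-J^n_{1,i+\ell k_n})$ into the main term plus drift-involving errors, summing the $O(1/\del)$ increments, and multiplying by $(k_n\del)^{1-2H}/k_n$, the accumulated error becomes smaller than the size of $M^{n,\ell}_1$ by a factor $\sqrt{k_n\del}$, hence is $o((k_n\del)^{1/2})$. Uniform-in-$t$ control of the stochastic-integral error pieces is obtained from BDG and Doob's maximal inequality applied to the partial sums in $i$, while drift-drift pieces are controlled termwise via Cauchy--Schwarz.

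For \eqref{eqn.M1} I would work directly with the product structure in \eqref{eq:M1}. Let $X_i$ and $Y_i$ denote the two stochastic integrals in the $i$th summand. Because $\ell\ge 2$, the support of $Y_i$ lies in $[(i+2k_n-1)\del,\infty)$ while $X_i$ is $\calf_{(i+2k_n-1)\del}$-measurable, so $X_i$ can be pulled inside the integral defining $Y_i$ to give $X_iY_i=\int_{\mathrm{supp}(Y_i)} X_i\chi'_i(s)(y_s-y_{[s/\del]\del})\,dy_s$. Summing over $i$ then expresses $\sum_i X_iY_i$ as a single stochastic integral of the form $\int_0^{t} G_n(s)(y_s-y_{[s/\del]\del})\,dy_s$ up to a boundary correction of negligible size, where $G_n(s)=\sum_{i:s\in\mathrm{supp}(Y_i)}X_i\chi'_i(s)$ aggregates $2k_n$ contributions at each $s$. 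Using $\|X_i\|_2=O(\sqrt{k_n}\,\del)$ and the fact that $X_i$ and $X_{i'}$ are uncorrelated whenever $|i-i'|\ge 2k_n$, a direct variance computation yields $\E[G_n(s)^2]=O(k_n^3\,\del^2)$; combining with $\E[(y_s-y_{[s/\del]\del})^2]=O(\del)$ and BDG gives $\E[\sup_{t\le T}|\sum_i X_iY_i|^2]=O(k_n^3\,\del^3)$. After multiplying by $(k_n\del)^{-1-2H}/k_n$ and the normalization $(k_n\del)^{-1/2}$, the overall $L^2$-sup rate becomes $O(\del^{\kappa(1+2H)-2H})$, which vanishes exactly when $\kappa>2H/(2H+1)$, matching the hypothesis.

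The main technical obstacle lies in the bookkeeping for \eqref{eqn.zm}: expanding the product of two first differences into combinations of drift and martingale pieces produces several error types, each of which must be shown to contribute $o((k_n\del)^{1/2})$ uniformly in $t$ and in $L^1$. This requires combining BDG for the martingale-type error pieces with Doob's maximal inequality for their partial sums in $i$, together with termwise or Cauchy--Schwarz bounds for the pathwise drift-drift remainders. The target rate $(k_n\del)^{1/2}$ is tight when $\kappa=2H/(2H+1)$, so the estimates must be executed carefully; nevertheless, the uniform boundedness and moment estimates supplied by Assumption~\ref{ass:CLT'} make each individual bound tractable.
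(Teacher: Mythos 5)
Your treatment of \eqref{eqn.M1} is fine and is in substance the same estimate as the paper's: rewriting $\sum_i X_iY_i$ as a single stochastic integral with the adapted aggregate integrand $G_n$ (adaptedness uses $\ell\geq2$) and applying BDG reproduces exactly the bound $(k_n\del)^{-3/2-2H}k_n^{1/2}\del^{3/2}=\del^{\kappa(1+2H)-2H}$ that the paper obtains from its block-martingale size estimate, so that half is correct.

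The gap is in \eqref{eqn.zm}, precisely at the boundary case $\kappa=\tfrac{2H}{2H+1}$, which is included in the hypothesis and is the case that matters. Your central claim is that ``each cross-term is smaller by at least a factor $\sqrt{\del}$ than the corresponding martingale piece,'' justified only by $\lvert B_s-B_{[s/\del]\del}\rvert\lec\del$ and $\lVert y_s-y_{[s/\del]\del}\rVert_{L^p}=O(\sqrt\del)$. These bounds do give the factor $\sqrt\del$ for the piece $\int_{[s/\del]\del}^s b_r\,dr\,dy_s$ (it is $O_\P(\sqrt{\del/k_n})$ against $O_\P(k_n^{-1/2})$), but for the piece $\frac{2}{k_n\del}\int(y_s-y_{[s/\del]\del})b_s\,ds$ a termwise bound only yields $O_\P(\sqrt\del)$, i.e.\ smaller than the martingale piece by $\sqrt{k_n\del}$, not $\sqrt\del$. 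Feeding that into the product with the other difference's martingale part and summing the $O(1/\del)$ terms termwise gives, after the normalization $(k_n\del)^{1/2-2H}k_n^{-1}$, exactly $\del^{\kappa(1+2H)-2H}=O_\P(1)$ when $\kappa=\tfrac{2H}{2H+1}$ --- so the error does \emph{not} vanish by size estimates alone; this is the point the paper flags explicitly. Closing it requires extra structure that your outline does not supply: one must first freeze the drift, $b_s\rightsquigarrow b_{(i-1)\del}$ (or $b_{[s/\del]\del}$), using \eqref{eq:b-2}, and only then does the mixed term acquire vanishing conditional expectations --- either across $i$ (the $i$th summand is centered given $\calf_{(i+\ell k_n-1)\del}$, using $\ell\geq2$, so a martingale/Doob-type size estimate over $i$ gains the missing factor, which is the paper's route for its term $F^n_1$), or within each block (so that $\int(y_s-y_{[s/\del]\del})b_{[s/\del]\del}\,ds$ improves to $O_\P(\sqrt{\del/k_n})$). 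Your mention of ``Doob's maximal inequality applied to the partial sums in $i$'' presupposes exactly this martingale structure, which does not exist before the freezing step and which you neither identify nor verify; note also that only one of the two mixed terms is automatically conditionally centered without freezing (the one where the martingale factor comes from the later increment), so the two must be treated differently. As written, the argument for \eqref{eqn.zm} does not go through at the optimal $\kappa$.
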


\begin{prop}\label{prop:approx2} Under the assumptions of Theorem~\ref{thm:CLT-main}, we have for all $\kappa\in[\frac{2H}{2H+1},\frac12]$ and integer sequences $k_n\sim\theta \del^{-\kappa}$ with $\theta>0$ that
	\[ (k_n\del)^{-1/2}(Z^{n,\ell}_2(t)-V^\ell_t -M^{n,\ell}_2(t)) 	\limL0, \]
	where 
	\begin{align} 
		M^{n,\ell}_2(t) 
		&=	\frac{(k_n\del)^{-1-2H}}{k_n}\sum_{i= 1}^{[t/\del]-(\ell+2)k_n+1} \int_0^{(i-1+(\ell+2)k_n)\delta_n}\int_0^r \Bigl\{ \Delta^{2}_{k_n\del}G_H((i-1)\del-r) \nonumber\\
		&\quad\times \Delta^{2}_{k_n\del}G_H((i+\ell k_n-1)\del-u)  +\Delta^{2}_{k_n\del}G_H((i+\ell k_n-1)\del-r) \label{eq:M2} \\
		&\quad\times  \Delta^{2}_{k_n\del}G_H((i-1)\del-u)\Bigr\} \bta_ud\bW_u \bta_rd\bW_r\nonumber
	\end{align}
	and 
	\begin{equation}\label{eq:GH}
		G_H(t)=\frac{K_H^{-1}}{H+\frac12}t_+^{H+1/2}.
	\end{equation} 
\end{prop}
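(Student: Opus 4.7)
\textbf{Proof plan for Proposition~\ref{prop:approx2}.} The strategy is to apply It\^o's product formula to each summand of $Z^{n,\ell}_2(t)$, isolating three contributions: an iterated stochastic integral that equals $M^{n,\ell}_2(t)$ after summation and normalization, a quadratic-covariation piece that converges to $V^\ell_t$, and drift remainders shown to be $o_{L^1}((k_n\del)^{1/2})$ uniformly in $t$.

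First, using the Volterra representation \eqref{eq:ci} of $c$ together with stochastic/ordinary Fubini, I would write
\begin{equation*}
	J^n_{2,i+k_n}-J^n_{2,i} = D^n_i + \int_0^{(i-1+2k_n)\del}\alpha_{1,i}(u)\bta_u d\bW_u,
\end{equation*}
with kernel $\alpha_{1,i}(u) = \Delta^2_{k_n\del}G_H((i-1)\del -u)/(k_n\del)$ and $D^n_i$ the second-order difference at step $k_n\del$ of $\frac{1}{k_n\del}\int_0^\cdot A_sds$; an analogous expression with kernel $\alpha_{2,i}(u) = \Delta^2_{k_n\del}G_H((i-1+\ell k_n)\del -u)/(k_n\del)$ holds for $J^n_{2,i+(\ell+1)k_n}-J^n_{2,i+\ell k_n}$. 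After extending each kernel by zero to the common upper limit $T_i = (i-1+(\ell+2)k_n)\del$, It\^o's product formula produces
\begin{equation*}
	\int_0^{T_i}\!\int_0^r\bigl[\alpha_{1,i}(u)\alpha_{2,i}(r)+\alpha_{2,i}(u)\alpha_{1,i}(r)\bigr]\bta_u d\bW_u\,\bta_r d\bW_r + \int_0^{T_i}\!\alpha_{1,i}\alpha_{2,i}(\eta_u^2+\hat\eta_u^2)du.
\end{equation*}
Multiplying by $(k_n\del)^{1-2H}/k_n$ and summing over $i$ yields exactly $M^{n,\ell}_2(t)$ as the iterated-integral part, whereas the bracket part, after exchanging sum and integral, becomes $\int(\eta_u^2+\hat\eta_u^2)\Psi_n(u)du$ with $\Psi_n(u) = \frac{(k_n\del)^{1-2H}}{k_n}\sum_i \alpha_{1,i}(u)\alpha_{2,i}(u)\mathbf 1_{u\le T_i}$.

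To obtain $V^\ell_t$, I would invoke the scaling $\Delta^2_{k_n\del}G_H(v) = \frac{K_H^{-1}(k_n\del)^{H+1/2}}{H+1/2}\Delta^2_1(v/(k_n\del))_+^{H+1/2}$ and approximate the sum over $i$ by a Riemann integral, yielding, for $u$ away from the boundary, $\Psi_n(u)\to \frac{K_H^{-2}}{(H+\frac12)^2}\int_\R \Delta^2_1 v_+^{H+1/2}\Delta^2_1 (v+\ell)_+^{H+1/2}dv$. The closed form in \eqref{eq:Phi} follows from the algebraic identity $\int_\R \Delta^2_1 f(v)\Delta^2_1 f(v+y)dv = \delta^4_1 F(y)$ with $F(y)=\int f(v)f(v+y)dv$, applied in the distributional sense with $F(y)\equiv K_H^2|y|^{2H+2}/(2(2H+1)(2H+2))$ modulo polynomials annihilated by $\delta^4_1$. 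Combined with the $L^1$-continuity of $s\mapsto \eta_s^2+\hat\eta_s^2$ and a boundary correction of order $O(k_n\del)$, this upgrades $\int(\eta^2+\hat\eta^2)\Psi_n du$ to $V^\ell_t=\Phi^H_\ell\int_0^t(\eta^2+\hat\eta^2)ds$ with error $o((k_n\del)^{1/2})$.

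The main obstacle is bounding the drift contributions $(k_n\del)^{1-2H}k_n^{-1}\sum_i(D^n_i \wt M^n_{i+\ell k_n} + \wt M^n_i D^n_{i+\ell k_n} + D^n_i D^n_{i+\ell k_n})$ at rate $o_{L^1}((k_n\del)^{1/2})$, where $\wt M^n_i$ denotes the stochastic-integral piece. Because the rough component $\int_0^\cdot g_{\wt H}(\cdot -s)\wt\eta_s ds$ of $A$ has H\"older regularity only $\wt H+\frac12$, one has $D^n_i = O_{L^p}((k_n\del)^{\wt H+1/2})$ while $\wt M^n_i = O_{L^2}((k_n\del)^H)$; the deterministic bound gives $O(t(k_n\del)^{\wt H - H + 1/2})$, which is sufficient only when $\wt H > H$. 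In the edge case $\wt H = H$ permitted by Assumption~\ref{ass:CLT}, one must exploit the martingale-difference orthogonality of the $\wt M^n_{i+\ell k_n}$—enabled by the gap $\ell\ge 2$ between the two intervals—together with the higher-moment bounds \eqref{eq:Holder-2} on $\wt\eta$ to recover the missing factor. Controlling the Riemann-sum error in $\Psi_n$ uniformly in $t$ near the boundary relies on analogous BDG-type fractional-kernel estimates, which I would relegate to the appendices.
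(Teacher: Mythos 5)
Your skeleton (Volterra representation of $c$, It\^o product formula giving the double stochastic integral $M^{n,\ell}_2$, a bracket term, and drift remainders) is the same as the paper's decomposition \eqref{eq:main-dec}, and your distributional computation of $\Phi^H_\ell$ is essentially the paper's Fourier argument in Lemma~\ref{lem:Phi}. The genuine gap is in your treatment of the bracket term. You claim that Riemann-sum convergence of $\Psi_n(u)$ to $\Phi^H_\ell$ together with ``$L^1$-continuity of $s\mapsto\eta_s^2+\hat\eta_s^2$ and a boundary correction'' yields the error $o((k_n\del)^{1/2})$. This does not follow and is false as a quantitative statement in the regime of main interest: the Riemann-sum error is at best $O(k_n^{-1})=O(\del^{\kappa})$, while the target is $o(\del^{(1-\kappa)/2})$, so at the optimal $\kappa=\frac{2H}{2H+1}$ this step already fails for $H\le\frac14$; and even after removing the Riemann-sum error exactly, replacing $\eta^2,\hat\eta^2$ by local values costs $\del^{H}$ if one only uses H\"older/$L^1$ regularity, which is insufficient for $H\le\frac14(\sqrt5-1)\approx 0.309$. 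The paper needs two specific ideas here that your proposal does not contain: (i) the exact fractional-part averaging identity \eqref{eq:al}, which shows that the $\del$-block average of $\Psi_n$ equals $\Phi^H_\ell$ identically, reducing the error to increments of $\eta^2,\hat\eta^2$ over intervals of length at most $\del$ (Equation \eqref{eq:Q-diff}); and (ii) the Volterra structure \eqref{eq:eta} of the VoV processes, exploited through a multiscale martingale decomposition (the $\la_q$-iteration in the proof of Lemma~\ref{lem:a}) to beat the $\del^H$ bound. Indeed, the paper stresses that this is the only place where assumption \eqref{eq:eta} is used; your argument never invokes it, which is a sign that the claimed rate cannot be reached by your route.

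A secondary, smaller issue is the drift cross-term when $\wt H=H$. You correctly note that the crude bound $O((k_n\del)^{\wt H-H+1/2})$ is borderline there, but your proposed fix (martingale-difference orthogonality of the stochastic-integral factors, enabled by $\ell\ge2$) is not obviously sufficient: the factor $D^n_{1,i+\ell k_n}$ is an integral of $\bta\,d\bW$ over all of $[0,(i-1+(\ell+2)k_n)\del]$, and its ``past'' portion on $[0,(i-1+2k_n)\del]$ carries the full order $(k_n\del)^{1+H}$ and is not orthogonal to the drift factor. The paper instead sharpens the regularity of $A$ itself: Lemma~\ref{lem:Ai} uses the $H$-H\"older continuity \eqref{eq:Holder-2} of $\wt\eta$ (via second differences and \cite[Proposition 2]{RS04}) to improve the increment bound to $(1+t^{-H})h^{(2H+1/2)\wedge1}$, after which Cauchy--Schwarz plus a separate treatment of the first few indices $i$ gives Lemma~\ref{lem:Cs} for all $H\in(0,\frac12]$ and all admissible $\kappa$. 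You would need either this refined estimate or a genuinely worked-out replacement to close that case.
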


\begin{prop}\label{prop:approx3} Under the assumptions of Theorem~\ref{thm:CLT-main}, we have for all $\kappa\in[\frac{2H}{2H+1},\frac12]$ and integer sequences $k_n\sim\theta \del^{-\kappa}$ with $\theta>0$ that
	\begin{align*}
		(k_n\del)^{-1/2}(Z^{n,\ell}_3(t)-\cala^{n,\ell}_t-M^{n,\ell}_{31}(t)-M^{n,\ell}_{32}(t))&\limL0,\\
		(k_n\del)^{-1/2}(Z^{\prime n,\ell}_3(t) -M^{\prime n,\ell}_{3}(t))&\limL0,
	\end{align*}  
	where $\cala^{n,\ell}_t=\cala^{n,\ell,k_n}_t$ and 
	\begin{align*}  
			M^{n,\ell}_{31}(t)	&=  \frac{2	(k_n\delta_n)^{-1-2H}}{k_{n}}  \sum_{i=1}^{[ t/\delta_{n} ]-(\ell+2)k_n+1 }  
			\int_{(i-1)\delta_{n}}^{(i-1+2k_{n})\delta_{n}} \chi(\tfrac{[s/\del]-i+1}{2k_n-1})(y_{s}-y_{[s/\del]\del})\\
			&\quad\times \int_0^s \Delta^{2}_{k_n\del} G_H((i-1+\ell k_n)\del-r) \bta_rd\bW_r dy_s,\\
			M^{n,\ell}_{32}(t)	&= 	\frac{2(k_n\delta_n)^{-1-2H} }{k_{n}}  \sum_{i=1}^{[ t/\delta_{n} ]-(\ell+2)k_n+1 }  \int_{0}^{(i-1+(\ell+2)k_{n})\delta_{n}} \! \Delta^{2}_{k_n\del} G_H((i-1+\ell k_n)\del-r)\\
			&\quad\times\int_{(i-1)\delta_{n}}^{(i-1+2k_n)\del\wedge r} \chi(\tfrac{[s/\del]-i+1}{2k_n-1})(y_{s}-y_{[s/\del]\del}) dy_s\bta_rd\bW_r
 \end{align*}
	and 
	\begin{equation}\label{eq:wtZn3-prime} \begin{split}
			M^{\prime n,\ell}_3(t)&=	\frac{2(k_n\delta_n)^{-1-2H} }{k_{n}}  \sum_{i=1}^{[ t/\delta_{n} ]-(\ell+2)k_n+1 }    
			\int_{(i+\ell k_n-1)\delta_{n}}^{(i-1+(\ell+2)k_{n})\delta_{n}} \chi(\tfrac{[s/\del]-i-\ell k_n+1}{2k_n-1})
			\\
			&\quad\times(y_{s}-y_{[s/\del]\del})dy_s \int_0^{(i-1+2k_n)\delta_n} \Delta^{2}_{k_n\del}G_H((i-1)\del-r) \bta_rd\bW_r.\end{split}
	\end{equation}
	If $\kappa\in(\frac{2H}{2H+1},\frac12]$, we also have $(k_n\del)^{-1/2}(M^{n,\ell}_{31}(t)+M^{n,\ell}_{32}(t)+M^{\prime n,\ell}_{3}(t))\limL 0$.
\end{prop}

We now give an overview of the proof of Proposition~\ref{prop:approx2}, with details delegated to Section~\ref{sec:approx2}. Note that $Z^{n,\ell}_2(t)$ is the only term that contributes to the LLN and, furthermore, is the only term that contributes to the CLT for any $\kappa\in [\frac{2H}{2H+1},\frac12]$.  The other three terms $Z^{n,\ell}_1(t)$, $Z^{n,\ell}_3(t)$ and $Z^{\prime n,\ell}_3(t)$ never contribute to the LLN and, unless $\kappa=\frac{2H}{2H+1}$, do not contribute to the CLT, either. Also, the approximations we need to make for them are mostly similar to those for $Z^{n,\ell}_2(t)$. This is why we postpone the whole proof of Propositions~\ref{prop:approx1} and \ref{prop:approx3} to Section~\ref{sec:approx13}.

By \eqref{eq:ci},  we have for any $t\geq0$ that 
\begin{equation*}
	\int_{t}^{t+k_n\delta_n}  c_s ds 	= c_0k_n\delta_n+\int_{t}^{t+k_n\delta_n} A_s ds +\int_0^{t+k_n\delta_n} \Delta_{k_n\del}G_H(t-r)\bta_r d\bW_r.
\end{equation*}
Consequently,
\begin{equation}\label{eq:int-c}
	J^n_{2,i+k_{n}\delta_{n}}-J^n_{2,i}=\frac{1}{k_n\del }	\int_{(i-1)\del}^{(i-1+k_n)\delta_n}  (c_{s+k_n\del}-c_s) ds 	= \frac1{k_n\del }( D^n_{1,i}+D^n_{2,i}),
\end{equation}
where
\begin{equation}\label{eq:5terms}\begin{split}
		D^n_{1,i}	& =\int_0^{(i-1+2k_n)\delta_n} \Delta^{2}_{k_n\del}G_H((i-1)\del-r) \bta_rd\bW_r, \\
		D^n_{2,i}	&=  \int_{(i-1)\del}^{(i-1+k_n)\delta_n}  (A_{s+k_n\del}-A_s) ds.
\end{split}\end{equation}
We can safely remove the drift part $D^n_{2,i}$:
\begin{lemma}\label{lem:Cs} Under Assumption~\ref{ass:CLT'}, we have $(k_n\del)^{-1/2}(Z^{n,\ell}_2(t)-\wt Z^{n,\ell}_2(t))\limL 0$, where
	\begin{equation}\label{eq:wtZn2} 
		\wt Z^{n,\ell}_2(t)=(k_n\del)^{-1-2H}\frac{1}{k_n}\sum_{i=1}^{[t/\del]-(\ell+2)k_n+1} D^n_{1,i} D^n_{1,i+\ell k_n}.
	\end{equation}
\end{lemma}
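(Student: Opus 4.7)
The plan is to expand the difference $Z^{n,\ell}_2-\wt Z^{n,\ell}_2$ using $k_n\del(J^n_{2,i+k_n}-J^n_{2,i})=D^n_{1,i}+D^n_{2,i}$ from \eqref{eq:int-c}, isolate the three cross terms involving the drift part $D^n_{2,i}$, and show that each contributes $o((k_n\del)^{1/2})$ to the $L^1$-sup norm on $[0,T]$. Substituting into \eqref{eq:Zt} yields
\begin{equation*}
Z^{n,\ell}_2(t)-\wt Z^{n,\ell}_2(t)=\frac{(k_n\del)^{-1-2H}}{k_n}\sum_{i=1}^{[t/\del]-(\ell+2)k_n+1}\bigl(D^n_{1,i}D^n_{2,i+\ell k_n}+D^n_{2,i}D^n_{1,i+\ell k_n}+D^n_{2,i}D^n_{2,i+\ell k_n}\bigr),
\end{equation*}
and since partial sums are dominated in modulus by the corresponding full sum of absolute values, it suffices to bound $\E\sum_{i=1}^{[T/\del]}|\cdot|$ for each cross term by $o((k_n\del)^{3/2+2H}k_n)$.

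The estimates rest on two ingredients. First, It\^o's isometry combined with the scaling identity $\int(\Delta^2_h G_H)^2(u)\,du=C_H h^{2H+2}$ and the uniform bound $|\bta|\le K$ from Assumption~\ref{ass:CLT'} gives $\|D^n_{1,i}\|_{L^2}\le C(k_n\del)^{H+1}$. Second, under \eqref{eq:assump} we have $A_t=\int_0^t a_s\,ds+\int_0^t g_{\wt H}(t-s)\wt\eta_s\,ds$; the smooth drift piece contributes $O((k_n\del)^2)$ to $D^n_{2,i}$, while Fubini recasts the rough drift's contribution as $\int_0^{(i-1+2k_n)\del}\Delta^2_{k_n\del}G_{\wt H}((i-1)\del-r)\wt\eta_r\,dr$, pathwise bounded by $K\|\Delta^2_{k_n\del}G_{\wt H}\|_{L^1}=O((k_n\del)^{\wt H+3/2})$, which dominates since $\wt H<\tfrac12$. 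Cauchy--Schwarz in $L^2\times L^\infty$ then yields $\E|D^n_{1,i}D^n_{2,j}|\le C(k_n\del)^{H+\wt H+5/2}$ and pointwise $|D^n_{2,i}D^n_{2,j}|\le C(k_n\del)^{2\wt H+3}$.

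Summing over the $O(\del^{-1})$ indices and multiplying by the overall prefactor $(k_n\del)^{-3/2-2H}k_n^{-1}$ yields $L^1$-sup bounds of $CT(k_n\del)^{\wt H-H}$ for the two mixed cross terms and $CT(k_n\del)^{2(\wt H-H)+1/2}$ for the pure drift term. Since $\wt H\ge H$ and $k_n\del\to0$ under $\kappa\le\tfrac12<1$, the pure-drift contribution always vanishes, as do the mixed contributions whenever $\wt H>H$. The main obstacle is the boundary case $\wt H=H$, where the naive mixed bound is only $O(1)$ and a sharpening is needed. The refinement I would use centers $\wt\eta_r=\wt\eta_{(j-1)\del}+(\wt\eta_r-\wt\eta_{(j-1)\del})$ inside the Fubini representation of $D^n_{2,j}$: the remainder picks up an additional factor $(k_n\del)^H$ from the Hölder estimate \eqref{eq:Holder-2}, while the constant-$\wt\eta$ piece reduces to $\wt\eta_{(j-1)\del}\int\Delta^2_{k_n\del}G_{\wt H}((j-1)\del-r)\,dr$, a second-order difference of the antiderivative of $G_{\wt H}$ whose bulk scaling $(k_n\del)^2((j-1)\del)^{\wt H-1/2}$ is strictly better than the uniform $(k_n\del)^{\wt H+3/2}$ bound used in the naive step. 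Splitting the sum into bulk and boundary indices and re-applying Cauchy--Schwarz then delivers the required $o((k_n\del)^{1/2})$ rate uniformly over $\wt H\in[H,\tfrac12)$.
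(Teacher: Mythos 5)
Your treatment of the case $H\in(0,\frac12)$ is essentially sound and takes a genuinely different route from the paper: instead of the $L^2$-modulus bound on the drift $A$ (Lemma~\ref{lem:Ai}, with its $t^{-H}$ weight and the cutoff of the first $[(k_n/\del)^{1/2}\del^\eps]$ indices), you bound the rough-drift contribution to $D^n_{2,i}$ pathwise by $K\lVert\Delta^2_{k_n\del}G_{\wt H}\rVert_{L^1}\lec(k_n\del)^{\wt H+3/2}$, and at the critical value $\wt H=H$ you recover the missing smallness by centering $\wt\eta$ and exploiting the bulk scaling $(k_n\del)^2((j-1)\del)^{\wt H-1/2}$ of $\Delta^2_{k_n\del}\calg_{\wt H}$, whose $j$-sum is integrable. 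This works, with one quantitative caveat: the claim that the centered remainder "picks up an additional factor $(k_n\del)^H$" from \eqref{eq:Holder-2} is only correct when $H+\wt H<\frac12$, since it requires $\int_{-2}^\infty\lvert\Delta^2_1G_{\wt H}(v)\rvert\,\lvert v\rvert^H dv<\infty$; for $H+\wt H\ge\frac12$ one must split the kernel integral at $\lvert u\rvert\asymp k_n\del$ and the gain is only $(k_n\del)^{1/2-\wt H}$ (up to a logarithm). Any positive power suffices, so the conclusion survives, but the statement as written is not accurate.

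The genuine gap is the case $H=\frac12$, which is within the scope of the lemma (Assumption~\ref{ass:CLT'} and Theorem~\ref{thm:CLT-main} allow $H=\frac12$, and the paper relies on this in Section~\ref{sec:4}). There $\wt\eta\equiv0$, so your rough-drift refinement is vacuous and the obstruction is the ordinary drift: $D^n_{2,i}$ is genuinely of order $(k_n\del)^2$ and $D^n_{1,i}$ of order $(k_n\del)^{3/2}$, so your own exponent count for the mixed terms gives $(k_n\del)^{1/2-H}=O(1)$, not $o(1)$. Crucially, this cannot be repaired within your framework of bounding $\E\sum_i\lvert D^n_{1,i}D^n_{2,i+\ell k_n}\rvert$: for, say, constant $a>0$ and nondegenerate $\bta$, that sum of absolute values really is of order $\del^{-1}(k_n\del)^{7/2}$, making the normalized quantity $\Theta(1)$. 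One needs the sign cancellation across $i$: as in the paper, replace $a_r$ by $a_{(i-1)\del}$ (an $o(1)$ error by \eqref{eq:b-2}, with no rate needed), so that the drift factor becomes $\calf_{(i-1)\del}$-measurable and $D^n_{1,i}$ times it has zero $\calf_{(i-1)\del}$-conditional expectation; a martingale size estimate then yields the bound $k_n\del\to0$. Without some argument of this type your proof does not establish the lemma at $H=\frac12$.
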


Next, an application of the integration by parts formula shows that 
\begin{equation}\label{eq:main-dec}
	\wt Z^{n,\ell}_2(t) = M^{n,\ell}_{21}(t)+M^{n,\ell}_{22}(t)+Q^{n,\ell}_2(t) =M^{n,\ell}_2(t)+Q^{n,\ell}_2(t),
\end{equation}
where 
\begin{align*}
	M^{n,\ell}_{21}(t)& =\frac{(k_n\del)^{-1-2H}}{k_n}\sum_{i=1}^{[t/\del]-(\ell+2)k_n+1} \int_0^{(i-1+2k_n)\delta_n} \Delta^{2}_{k_n\del}G_H((i-1)\del-r)\\
	&\quad\times\int_0^r \Delta^{2}_{k_n\del}G_H((i+\ell k_n-1)\del-u)  \bta_ud\bW_u \bta_rd\bW_r,\\
	M^{n,\ell}_{22}(t)& =\frac{(k_n\del)^{-1-2H}}{k_n}\sum_{i=1}^{[t/\del]-(\ell+2)k_n+1} \int_0^{(i-1+(\ell+2)k_n)\delta_n} \Delta^{2}_{k_n\del}G_H((i+\ell k_n-1)\del-r) \\
	&\quad\times\int_0^r \Delta^{2}_{k_n\del}G_H((i-1)\del-u) \bta_ud\bW_u \bta_rd\bW_r,\\
	Q^{n,\ell}_2(t)&=\frac{(k_n\del)^{-1-2H}}{k_n}\sum_{i=1}^{[t/\del]-(\ell+2)k_n+1} \int_0^{(i-1+2k_n)\delta_n} \Delta^{2}_{k_n\del}G_H((i-1)\del-r) \\
	&\quad\times \Delta^{2}_{k_n\del}G_H((i+\ell k_n-1)\del-r)\lvert \bta_r\rvert^2  dr. 
\end{align*}
For the proof of Proposition~\ref{prop:approx2}, we only have to further consider $Q^{n,\ell}_2(t)$.

Interchanging summation and integration and factoring $k_n\delta_n$ out of $\Delta^2_{k_n\del}G_H$, we obtain
\begin{equation}\label{eq:step1}\begin{split}
		Q^{n,\ell}_2(t)&= \frac{1}{k_n} \int_0^{([t/\del]-\ell k_n)\delta_n} \sum_{i=([r/\del]-2k_n+2)\vee1}^{[t/\del]-(\ell+2)k_n+1} \Delta^{2}_{1}G_H(\tfrac{i-1-r/\del}{k_n}) \\
		&\quad\times \Delta^{2}_{1}G_H(\tfrac{i-1+\ell k_n-r/\del}{k_n})\lvert \bta_r\rvert^2 dr.\end{split}
\end{equation}
Writing $r/\del=[r/\del]+\{r/\del\}$ as the sum of its integer and fractional part and changing the index $i-1-[r/\del]$ to $i$ result in
\begin{equation}\label{eq:step2} \begin{split} Q^{n,\ell}_2(t)&= \frac{1}{k_n} \int_0^{([t/\del]-\ell k_n)\delta_n} \sum_{i=(1-2k_n)\vee (-[r/\del])}^{[t/\del]-[r/\del]-(\ell+2)k_n} \Delta^{2}_{1}G_H(\tfrac{i-\{r/\del\}}{k_n})\\
		&\quad\times  \Delta^{2}_{1}G_H(\tfrac{i+\ell k_n-\{r/\del\}}{k_n}) \lvert \bta_r\rvert^2dr.\end{split}
\end{equation}
The next lemma shows that we can replace the lower bound in the summation by $1-2k_n$ and the upper bound by $+\infty$.
\begin{lemma}\label{lem:QV}
	Under Assumption~\ref{ass:CLT'}, we have $(k_n\del)^{-1/2}(Q^{n,\ell}_2(t)-\hat Q^{n,\ell}_2(t))\limL 0$, where
	\begin{equation}\label{eq:Qhat} 
		\hat Q^{n,\ell}_2(t)= \int_0^{([t/\del]-\ell k_n)\delta_n}  \frac{1}{k_n}\sum_{i=1-2k_n}^{\infty} \Delta^{2}_{1}G_H(\tfrac{i-\{r/\del\}}{k_n})  \Delta^{2}_{1}G_H(\tfrac{i+\ell k_n-\{r/\del\}}{k_n})\lvert \bta_r\rvert^2 dr.\!\!\!
	\end{equation}
\end{lemma}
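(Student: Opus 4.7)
The plan is to decompose $Q^{n,\ell}_{2}(t)-\hat Q^{n,\ell}_{2}(t)$ into the two boundary discrepancies obtained by comparing the summation ranges in \eqref{eq:step2} and \eqref{eq:Qhat}, bound each in $L^{1}$ uniformly in $t\in[0,T]$ by $O(k_{n}\delta_{n}(1+|\log k_{n}\delta_{n}|))$, and then invoke $k_{n}\delta_{n}\to 0$. Write
\begin{equation*}
\hat Q^{n,\ell}_{2}(t)-Q^{n,\ell}_{2}(t)=R^{n,\ell}_{\mathrm{low}}(t)+R^{n,\ell}_{\mathrm{high}}(t),
\end{equation*}
where $R^{n,\ell}_{\mathrm{low}}$ collects the summands $i\in[1-2k_{n},-[r/\delta_{n}]-1]$ (active only when $[r/\delta_{n}]\leq 2k_{n}-2$) and $R^{n,\ell}_{\mathrm{high}}$ collects $i\geq[t/\delta_{n}]-[r/\delta_{n}]-(\ell+2)k_{n}+1$. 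The core analytic ingredient is the uniform estimate
\begin{equation*}
\lvert\Delta^{2}_{1}G_{H}(x)\rvert\lesssim(1+x_{+})^{H-3/2}\mathbf{1}_{\{x\geq-2\}},
\end{equation*}
obtained from the explicit form $G_{H}(x)=K_{H}^{-1}(H+\tfrac12)^{-1}x^{H+1/2}_{+}$ by direct evaluation on $[-2,1]$ and a second-order Taylor expansion on $[1,\infty)$, noting that $\Delta^{2}_{1}G_{H}$ vanishes on $(-\infty,-2)$. One also uses $\sup_{r\in[0,T]}\E[|\bta_{r}|^{p}]<\infty$ for every $p$, which follows from Assumption~\ref{ass:CLT'} and BDG applied to the stochastic convolutions defining $\eta^{2}$ and $\hat\eta^{2}$ in \eqref{eq:ci}.

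For $R^{n,\ell}_{\mathrm{low}}$, the $r$-integration is over $[0,(2k_{n}-1)\delta_{n}]$ (length $O(k_{n}\delta_{n})$) and contains at most $2k_{n}$ summands. The first factor has argument in $[-2,-1/k_{n}]$ and is $O(1)$; the second has argument in $[\ell-2,\ell]$ and is also $O(1)$ by the displayed bound. Multiplication by $k_{n}^{-1}$ leaves an integrand of order $|\bta_{r}|^{2}$, so $\E[\sup_{t\in[0,T]}|R^{n,\ell}_{\mathrm{low}}(t)|]\lesssim k_{n}\delta_{n}$.

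For $R^{n,\ell}_{\mathrm{high}}$, set $u=t-r\in[\ell k_{n}\delta_{n},t]$ and $i^{*}=[t/\delta_{n}]-[r/\delta_{n}]-(\ell+2)k_{n}$, so $i^{*}/k_{n}=u/(k_{n}\delta_{n})-(\ell+2)+O(1/k_{n})$. Comparing the tail sum to an integral via the displayed pointwise estimate gives, uniformly in $r$,
\begin{equation*}
k_{n}^{-1}\sum_{i>i^{*}}\lvert\Delta^{2}_{1}G_{H}(\tfrac{i-\{r/\delta_{n}\}}{k_{n}})\Delta^{2}_{1}G_{H}(\tfrac{i+\ell k_{n}-\{r/\delta_{n}\}}{k_{n}})\rvert\lesssim\bigl(1+(i^{*}/k_{n})_{+}\bigr)^{2H-2},
\end{equation*}
where integrability of the tail follows from $2H-3<-1$. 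After changing variables to $v=u/(k_{n}\delta_{n})$ and taking expectation against $|\bta_{r}|^{2}$, the boundary regime $v\in[\ell,\ell+2]$ contributes $O(k_{n}\delta_{n})$ directly, and the far regime contributes $k_{n}\delta_{n}\int_{\ell+2}^{t/(k_{n}\delta_{n})}v^{2H-2}dv$, which is dominated by the lower endpoint and amounts to $O(k_{n}\delta_{n})$ when $H<\tfrac12$ and to $O(k_{n}\delta_{n}|\log k_{n}\delta_{n}|)$ when $H=\tfrac12$. Adding the estimates for $R^{n,\ell}_{\mathrm{low}}$ and $R^{n,\ell}_{\mathrm{high}}$ and dividing by $(k_{n}\delta_{n})^{1/2}$ yields the claim. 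The only mildly delicate point is the boundary regime of $R^{n,\ell}_{\mathrm{high}}$, where the power-law tail decay is unavailable and one must rely on mere boundedness of $\Delta^{2}_{1}G_{H}$; this however concerns an $r$-interval of length $O(k_{n}\delta_{n})$ and poses no difficulty.
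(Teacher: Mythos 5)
Your proof is correct, and your handling of the lower summation limit (the terms $i\le -[r/\delta_n]-1$, active only for $[r/\delta_n]\le 2k_n-2$, bounded crudely to give $O(k_n\delta_n)$) is exactly what the paper does. Where you genuinely diverge is the extension of the upper limit to $+\infty$: the paper splits the $r$-integral at $t-r=k_n^p\delta_n$ with an exponent $p>1+(1-\kappa)/(4\kappa(1-H))$ chosen to balance a sup-over-$r$ tail bound $k_n^{(2-2H)(1-p)}$ on the far region against a crude $k_n^{\eps+p}\delta_n$ bound on the near region, and treats $H=\tfrac12$ separately via the compact support of $\Delta^2_1G_{1/2}$; you instead keep the $r$-dependence of the tail, i.e.\ the bound $k_n^{-1}\sum_{i>i^*}\lvert\cdots\rvert\lesssim(1+(i^*/k_n)_+)^{2H-2}$ with $i^*=[t/\delta_n]-[r/\delta_n]-(\ell+2)k_n$, and integrate it in $r$, which after the change of variables $v=(t-r)/(k_n\delta_n)$ gives $O(k_n\delta_n)$ for $H<\tfrac12$ (since $2H-2<-1$) and $O(k_n\delta_n\lvert\log k_n\delta_n\rvert)$ for $H=\tfrac12$ — both $o((k_n\delta_n)^{1/2})$. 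This is sharper and simpler: it removes the need to tune $p$ and $\eps$ and absorbs $H=\tfrac12$ into the same computation at the harmless cost of a logarithm (the paper's compact-support observation would remove even that). Two small points to keep straight: the correct far-regime integrand is $(1+(v-\ell-2)_+)^{2H-2}$ rather than $v^{2H-2}$ — without the ``$1+$'' the integrand would be non-integrable at $v=\ell+2$ since $2H-2\le -1$, though for $v\ge\ell+2$ the two are comparable so your conclusion stands — and for the functional ($\sup_{t\le T}$ inside the expectation) version of $\limL$ you should either bound $\lvert\bta_r\rvert^2$ by $\sup_{r\le T}\lvert\bta_r\rvert^2$ (which has finite expectation under Assumption~\ref{ass:CLT'}) against the $L^1$-norm in $r$ of your envelope, or note that your bounds are uniform in $t\le T$; the paper is equally terse on this point, so it is not a gap relative to its own standard.
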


Since $\{r/\del\}\in[0,1)$, the sum over $i$ is a Riemann sum that converges as $k_n\to\infty$ to the limit $\int_{-2}^\infty \Delta^2_1 G_H(v)\Delta^2_1G_H(v+\ell) dv$.  This integral is nothing else but $\Phi^H_\ell$ defined in \eqref{eq:Phi}.
\begin{lemma}\label{lem:Phi}
	For any $H\in(0,\frac12)$ and $\ell\geq2$,
	\begin{equation}\label{eq:Phi-2} 
		\Phi^H_\ell =\int_{-2}^\infty \Delta^2_1 G_H(v)\Delta^2_1 G_H(v+\ell) dv.	\end{equation}
\end{lemma}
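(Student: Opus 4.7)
Set $\alpha=H+\tfrac12$ and $c=K_H^{-1}/\alpha$, so that $G_H(v)=cv_+^\alpha$. The plan is to (i) transfer both second-order differences onto a single factor of $G_H$ by integration by parts, (ii) evaluate the resulting integral using a Taylor--B-spline representation combined with a beta integral, and (iii) simplify the multiplicative constant with standard gamma function identities.

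For (i), the identity $\Delta^2_1 G_H(v)=\delta^2_1 G_H(v+1)$ and the vanishing of $G_H$ on $(-\infty,0]$ let me rewrite the integral, after the substitution $w=v+1$, as $I:=\int_\R\delta^2_1 G_H(w)\,\delta^2_1 G_H(w+\ell)\,dw$ on all of $\R$. The antisymmetry $\int_\R\delta_1 A(w)B(w)\,dw=-\int_\R A(w)\delta_1 B(w)\,dw$ (immediate from a single shift of variable), together with the asymptotic $\delta^k_1 G_H(w)=O(w^{\alpha-k})$ as $w\to\infty$ (which makes every intermediate integrand $O(w^{2\alpha-4})$ and hence absolutely integrable for $\alpha<\tfrac32$), then lets me apply this antisymmetry twice to obtain
\[
I=\int_0^\infty G_H(w)\,\delta^4_1 G_H(w+\ell)\,dw=c^2\int_0^\infty w^\alpha\,\delta^4_1(w+\ell)^\alpha\,dw,
\]
where the second equality uses $\ell\geq 2$ and $w\geq 0$ to keep all five shifted arguments $w+\ell+t$, $t\in\{-2,-1,0,1,2\}$, nonnegative.

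For (ii), fix $\ell>2$ (so every such shift is strictly positive) and apply the Taylor--B-spline formula $\delta^4_1 F(v)=\int_{-2}^2 B_4(t)F^{(4)}(v+t)\,dt$, with $B_4:=\mathbf{1}_{[-1/2,1/2]}^{*4}$ the cardinal cubic B-spline, to $F(v)=v^\alpha$. After Fubini, the inner $w$-integral reduces to the standard beta integral $\int_0^\infty w^\alpha(w+\mu)^{\alpha-4}\,dw=\mu^{2\alpha-3}\Gamma(\alpha+1)\Gamma(3-2\alpha)/\Gamma(4-\alpha)$, valid for $\alpha\in(-1,\tfrac32)$. Running the Taylor--B-spline formula in reverse with $F(v)=v^{2\alpha+1}/[(2\alpha-2)(2\alpha-1)(2\alpha)(2\alpha+1)]$ (whose fourth derivative equals $v^{2\alpha-3}$) then identifies the remaining $\int_{-2}^2 B_4(t)(\ell+t)^{2\alpha-3}\,dt$ as $\delta^4_1\ell^{2\alpha+1}$ divided by the same Pochhammer product.

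For (iii), I collapse the resulting constant using $\Gamma(4-\alpha)=-(\alpha-1)(\alpha-2)(\alpha-3)\Gamma(1-\alpha)$ and $\Gamma(3-2\alpha)=(2\alpha-2)(2\alpha-1)\Gamma(1-2\alpha)$ to cancel the accumulated factorial factors, followed by the reflection formulas $\Gamma(\alpha)\Gamma(1-\alpha)=\pi/\sin(\pi\alpha)$ and $\Gamma(2\alpha)\Gamma(1-2\alpha)=\pi/\sin(2\pi\alpha)$, the double-angle identity $\sin(2\pi\alpha)=2\sin(\pi\alpha)\cos(\pi\alpha)$, and the formula $c^2=-\cos(\pi\alpha)\Gamma(2\alpha)/[\alpha^2\Gamma(\alpha)^2]$ (coming from the definition of $K_H$ together with $\sin(\pi(\alpha-\tfrac12))=-\cos(\pi\alpha)$). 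Everything collapses to $1/[4\alpha(2\alpha+1)]=1/[2(2H+1)(2H+2)]$, proving the identity for $\ell>2$; the boundary case $\ell=2$ then follows by dominated convergence as $\ell\downarrow 2$. The main technical obstacle is the integrability bookkeeping underlying the two integration-by-parts steps and the Fubini interchange, all handled uniformly by the $O(w^{2\alpha-4})$ bound; everything else is mechanical gamma function arithmetic.
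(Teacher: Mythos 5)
Your proposal is correct, and it takes a genuinely different route from the paper. The paper proves \eqref{eq:Phi-2} by Fourier methods: it extends the integral to $\R$, applies Parseval's formula with the distributional transforms of $x_\pm^\al$ and $(x\pm i0)^\al$, notes that $\Delta^2_1$ becomes multiplication by $e^{2i\xi}-2e^{i\xi}+1$ so that the two difference operators combine into $(e^{\frac12 i\xi}-e^{-\frac12 i\xi})^4$, i.e.\ $\delta^4_1$ in the time domain, and then inverts to get $\delta^4_1\lvert\ell\rvert^{2H+2}$ times an explicit Gamma-function constant. You instead work entirely on the real line: two discrete integrations by parts move both $\Delta^2_1$'s onto one kernel, the Peano-kernel (B-spline) representation of $\delta^4_1$ plus the beta integral $\int_0^\infty w^\al(w+\mu)^{\al-4}dw$ produce $\delta^4_1\ell^{2\al+1}$ over the Pochhammer factor, and the reflection and duplication identities collapse the constant to $1/[2(2H+1)(2H+2)]$ — I checked this bookkeeping and it does close, including the identity $c^2=-\cos(\pi\al)\Ga(2\al)/[\al^2\Ga(\al)^2]$. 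What each approach buys: yours is more elementary (no tempered distributions) but needs $\ell>2$ for the smoothness of $v\mapsto v^\al$ on all shifted arguments, with $\ell=2$ recovered by a limit, whereas the paper's Fourier computation is uniform in real $\ell$ without case distinctions and, as the paper notes, sets up machinery reused in the later covariance computations \eqref{eq:ga}, so it would not fully substitute for the paper's argument downstream. One small imprecision: the intermediate integrands arising when you split the shifted integrals in the summation-by-parts step are only $O(w^{2\al-3})$, not $O(w^{2\al-4})$, so the correct integrability condition there is $\al<1$ rather than $\al<\tfrac32$; since $\al=H+\tfrac12\in(\tfrac12,1)$ for $H\in(0,\tfrac12)$ this is harmless, but a full write-up should state it accurately.
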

As an immediate consequence, we obtain $\wh Q^{n,\ell}_2(t)\limL \Phi^H_\ell \int_0^t \lvert \bta_r\rvert^2 dr= V^\ell_t$, the desired LLN limit. There is only one problem: the convergence rate. Even for  a smooth function (which $\Delta^2_1 G_H(v)$ is not), a Riemann sum converges to its limit only with rate $k_n$, which for small $H$ and small $\kappa$ (including the optimal $\kappa=\frac{2H}{2H+1}$) is much slower than the needed $(k_n\del)^{-1/2}$.  Nevertheless, we shall prove
\begin{lemma}\label{lem:a}
	Under Assumption~\ref{ass:CLT'},  we have $(k_n\del)^{-1/2}(\hat Q^{n,\ell}_2(t)- V^\ell_t)\limL0$.
\end{lemma}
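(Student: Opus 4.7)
The plan is to write $\hat Q^{n,\ell}_2(t) - V^\ell_t=\int_0^{t'} e_n(r)|\bta_r|^2\,dr-\Phi^H_\ell\int_{t'}^t|\bta_r|^2\,dr$, where $t'=([t/\del]-\ell k_n)\del$ and $e_n(r):=\psi_n(\{r/\del\})-\Phi^H_\ell$ with $\psi_n(u)=\frac{1}{k_n}\sum_{i\geq 1-2k_n}\Delta^2_1 G_H(\frac{i-u}{k_n})\Delta^2_1 G_H(\frac{i+\ell k_n-u}{k_n})$, and combine a pointwise Riemann-sum error estimate for $\psi_n$ with a zero-average identity. The substitution $v=(i-u)/k_n$ tiles $[-2,\infty)$ by intervals of length $1/k_n$, so $\int_0^1\psi_n(u)\,du=\Phi^H_\ell$, showing that $e_n$ is $\del$-periodic with zero mean on every period. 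The boundary correction $\Phi^H_\ell\int_{t'}^t|\bta_r|^2\,dr$ is of size $O(k_n\del)$, hence harmless after multiplication by $(k_n\del)^{-1/2}$.

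The first step is the pointwise bound $\sup_u|\psi_n(u)-\Phi^H_\ell|\lec k_n^{-H-3/2}$. The integrand $f(v)=\Delta^2_1 G_H(v)\Delta^2_1 G_H(v+\ell)$ vanishes for $v<-2$, decays as $v^{2H-3}$ at infinity, and is $C^\infty$ on $\R$ except at the finitely many singularities $v_0\in\{-2,-1,0,-\ell\}\cap[-2,\infty)$, where it has local expansion $|v-v_0|^{H+1/2}\cdot(\text{smooth})$. A Taylor expansion away from singularities gives $O(k_n^{-2})$, a scaling argument near each singularity yields $O(k_n^{-H-3/2})$, and the tail contributes $O(k_n^{2H-2})$; the singular term dominates.

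The main obstacle is bounding $\int_0^{t'}e_n(r)|\bta_r|^2\,dr$. A direct use of the H\"older modulus of $|\bta|^2$ is insufficient when $H_\eta$ or $H_{\hat\eta}$ is small, so I split $\eta_r^2=\eta_0^2+A^\eta_r+\int_0^r g_{H_\eta}(r-s)\btheta_s\,d\bar\bW_s$ (and similarly for $\hat\eta_r^2$). The initial-value and drift terms contribute $O(\sup|e_n|\cdot\del^{H+1/2})$ thanks to the zero-mean cancellation over each $\del$-period combined with the at-least-$(H+\tfrac12)$-H\"older regularity of $A^\eta$. For the stochastic piece, stochastic Fubini rewrites it as $\int_0^{t'}\Phi^{(t')}_s\btheta_s\,d\bar\bW_s$ with $\Phi^{(t')}_s=\int_s^{t'}e_n(r)g_{H_\eta}(r-s)\,dr$. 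On each $\del$-period not containing $s$, the zero-average lets me replace $g_{H_\eta}(r-s)$ by $g_{H_\eta}(r-s)-g_{H_\eta}((j-1)\del-s)$, and the derivative bound $|g_{H_\eta}'(u)|\lec u^{H_\eta-3/2}$ (valid for $u$ bounded away from $0$) together with a direct estimate on the period closest to $s$ gives $|\Phi^{(t')}_s|\lec\sup|e_n|\cdot\del^{H_\eta+1/2}$ uniformly in $s$ and $t'$. It\^o's isometry then yields an $L^2$ bound on this stochastic integral, and passing to $\sup_t$ is done by splitting it into the genuine martingale $F_1(t)=\int_0^t\Phi^{(T)}_s\btheta_s\,d\bar\bW_s$ (controlled by Doob's inequality) plus a remainder handled by a second round of the zero-average argument applied on the periods in $[t,T]$.

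Combining all contributions and multiplying by $(k_n\del)^{-1/2}\sim\del^{-(1-\kappa)/2}$ (with $\sup|e_n|\sim\del^{\kappa(H+3/2)}$) yields a total $L^1$-sup bound of order $\del^\alpha$ with $\alpha>0$ for every $\kappa\in[\tfrac{2H}{2H+1},\tfrac12]$, namely $\alpha\in\{\kappa(2+H)+H_\eta\wedge H_{\hat\eta},\;2\kappa+H(1+\kappa),\;(1-\kappa)/2\}$, each strictly positive. This establishes $(k_n\del)^{-1/2}(\hat Q^{n,\ell}_2(t)-V^\ell_t)\limL 0$.
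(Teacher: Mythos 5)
Your proposal is correct in substance and, on the hard part of the lemma, takes a genuinely different route from the paper. Both proofs start from the same structural observation (your identity $\int_0^1\psi_n(u)\,du=\Phi^H_\ell$ is the paper's rewriting \eqref{eq:al}, leading to the zero-mean, $\del$-periodic error $e_n$ and the reduction \eqref{eq:Q-diff}), and both exploit the convolution structure \eqref{eq:eta} of $\eta^2,\wh\eta^2$ rather than mere H\"older regularity. The paper then keeps the increments $\eta^2_r-\eta^2_u$ inside the time integral and controls the resulting terms $\xi^n_j$ by a multi-scale martingale blocking: it slices the stochastic integral defining $\eta^2$ into layers $\la_n^{(q)}$, uses the smallness of $\Delta_{r-u}g_{H_\eta}(u-s)$ for large $u-s$ (cf.\ \eqref{eq:xi-q}), and iterates martingale size estimates via the recursion \eqref{eq:iteration} until the scales exceed the needed threshold. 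You instead apply the stochastic Fubini theorem first, so that the $\del$-periodic cancellation is absorbed \emph{deterministically} into the kernel $\Phi^{(t')}_s=\int_s^{t'}e_n(r)g_{H_\eta}(r-s)\,dr$, which the zero-average-per-period argument plus $\lvert g_{H_\eta}'(u)\rvert\lec u^{H_\eta-3/2}$ bounds by $\sup\lvert e_n\rvert\,\del^{H_\eta+1/2}$ uniformly in $s$; a single It\^o isometry/Doob step then replaces the paper's iteration. This is simpler and arguably more transparent; the paper's layered argument is what one is forced into if the cancellation is only exploited after conditioning, period by period.

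Two points need shoring up. First, the uniform-in-$t$ control (the $\limL$ statement requires $\E[\sup_{t\le T}\lvert\cdot\rvert]\to0$): your split into the martingale $F_1$ and a remainder is right, but ``a second round of the zero-average argument'' is not by itself a sup bound. The fix is easy: for each grid value $t'$ the remainder kernel satisfies $\lvert\int_{t'}^{T'}e_n(r)g_{H_\eta}(r-s)\,dr\rvert\lec\sup\lvert e_n\rvert\,\del\,[(t'-s)\vee\del]^{H_\eta-1/2}$, so its $L^2$ norm is $\lec\sup\lvert e_n\rvert\,\del$, and a Cauchy--Schwarz union bound over the $O(\del^{-1})$ grid points costs only $\del^{-1/2}$, leaving the exponent $\kappa(H+2)>0$ after multiplying by $(k_n\del)^{-1/2}$. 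Second, your claimed pointwise rate $\sup_u\lvert\psi_n(u)-\Phi^H_\ell\rvert\lec k_n^{-H-3/2}$ does not follow from ``Taylor away from singularities'' alone: the first-order Riemann-sum term is only $O(k_n^{-1})$ per unit length unless one invokes the cancellation coming from $f(-2)=0$ and the decay at infinity (or argues via Poisson summation, using that $\calf[\Delta^2_1G_H](\xi)$ decays like $\lvert\xi\rvert^{-H-3/2}$). This is harmless, however: even the elementary bound $\sup_u\lvert e_n\rvert\lec k_n^{-1}$, obtained from $\int\lvert f'\rvert<\infty$, keeps all your exponents strictly positive, so the conclusion stands with or without the sharper rate.
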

This unexpected gain in convergence rate is only possible because we have a very   special  Riemann sum \emph{and} a very special process $\bta$ in \eqref{eq:Qhat}. To understand what is so particular about the former, let us exploit the periodicity of the mapping $u\mapsto \{u\}$ and change variables a few times to rewrite 
\begin{equation}\label{eq:al}\begin{split}
		\Phi^H_\ell	&=\sum_{i=1-2k_n}^\infty \int_{\frac{i-1}{k_n}}^{\frac i{k_n}} \Delta^2_1 G_H(v)\Delta^2_1 G_H(v+\ell)dv \\
		&=\frac{1}{k_n}\sum_{i=1-2k_n}^\infty \int_0^1 \Delta^2_1 G_H(\tfrac{i-v}{k_n})\Delta^2_1 G_H(\tfrac{i-v}{k_n}+\ell)dv\\
		&=\frac{1}{k_n}\sum_{i=1-2k_n}^\infty \delta_n^{-1}\int_{([r/\del])\del}^{([r/\del]+1)\del}  \Delta^2_1 G_H(\tfrac{i-\{u/\del\}}{k_n})\Delta^2_1 G_H(\tfrac{i-\{u/\del\}}{k_n}+\ell)du,
	\end{split}
\end{equation}
which is valid for any $r>0$ and $n\in\N$. 
Comparing the last line of the previous display with \eqref{eq:Qhat}, we realize that there is no need to study how fast the sum over $i$ approaches its limit since
\begin{align*}
	&\hat Q^{n,\ell}_2(t)-V^\ell_t =\hat Q^{n,\ell}(t)- \Phi^H_\ell \int_0^{([t/\del]-\ell k_n)\del} \lvert \bta_u\rvert^2du+O_\PP(k_n\del)	\\
	&\quad= \frac{1}{k_n}\sum_{i=1-2k_n}^\infty \int_0^{([t/\del]-\ell k_n)\del} \biggl( \Delta^{2}_{1}G_H(\tfrac{i-\{r/\del\}}{k_n})  \Delta^{2}_{1}G_H(\tfrac{i+\ell k_n-\{r/\del\}}{k_n})\\
	&\quad\quad-\delta_n^{-1}\int_{([r/\del])\del}^{([r/\del]+1)\del}  \Delta^2_1 G_H(\tfrac{i-\{u/\del\}}{k_n})\Delta^2_1 G_H(\tfrac{i-\{u/\del\}}{k_n}+\ell)du\biggr)\lvert \bta_r\rvert^2 dr +O_\PP(k_n\del).
\end{align*}
What matters is therefore how fast the difference in parentheses goes to $0$ (as long as we obtain a bound that is an integrable function of $\frac{i}{k_n}$). With this in mind, we rewrite the last line in the previous display as
\begin{multline*}
	\sum_{j=1}^{[t/\del]-\ell k_n} \frac{1}{k_n\del} \sum_{i=1-2k_n}^{\infty} \int_{(j-1)\del}^{j\del}\int_{(j-1)\del}^{j\del}\biggl\{ \Delta^{2}_{1}G_H(\tfrac{i-\{r/\del\}}{k_n})  \Delta^{2}_{1}G_H(\tfrac{i+\ell k_n-\{r/\del\}}{k_n})\\
	-\Delta^{2}_{1}G_H(\tfrac{i-\{u/\del\}}{k_n})  \Delta^{2}_{1}G_H(\tfrac{i+\ell k_n-\{u/\del\}}{k_n})\biggr\}du\lvert \bta_r\rvert^2dr +O_\PP(k_n\del).
\end{multline*}
The $dudr$-double integral on the right-hand side can be split into an $\int_{(j-1)}^{j\del} \int_{(j-1)\del}^r$-part and an $\int_{(j-1)}^{j\del} \int_r^{j\del}$-part. By symmetry, the latter is equal to 
\begin{multline*} -\int_{(j-1)\del}^{j\del}\int_{(j-1)\del}^{r}\biggl\{ \Delta^{2}_{1}G_H(\tfrac{i-\{r/\del\}}{k_n})  \Delta^{2}_{1}G_H(\tfrac{i+\ell k_n-\{r/\del\}}{k_n})\\-\Delta^{2}_{1}G_H(\tfrac{i-\{u/\del\}}{k_n})  \Delta^{2}_{1}G_H(\tfrac{i+\ell k_n-\{u/\del\}}{k_n})\biggr\}\lvert \bta_u\rvert^2du  dr, \end{multline*}
which implies that
\begin{equation}\label{eq:Q-diff}\begin{split}
		&\hat Q^{n,\ell}_2(t)- V^\ell_t \\
		&\qquad= \sum_{j=1}^{[t/\del]-\ell k_n} \frac{1}{k_n\del} \sum_{i=1-2k_n}^{\infty} \int_{(j-1)\del}^{j\del}\int_{(j-1)\del}^{r}\biggl\{ \Delta^{2}_{1}G_H(\tfrac{i-\{r/\del\}}{k_n})  \Delta^{2}_{1}G_H(\tfrac{i+\ell k_n-\{r/\del\}}{k_n})\\
		& \quad\qquad-\Delta^{2}_{1}G_H(\tfrac{i-\{u/\del\}}{k_n})  \Delta^{2}_{1}G_H(\tfrac{i+\ell k_n-\{u/\del\}}{k_n})\biggr\}(\eta_r^2-\eta_u^2 + \wh\eta_r^2-\wh\eta_u^2)du  dr +O_\PP(k_n\del). 
	\end{split}\raisetag{-3.5\baselineskip}\end{equation}

In the last line,  the regularity of $(\eta^2, \wh \eta^2)$ starts to play a role. If $(\eta^2, \wh \eta^2)$ were just any $H$-H\"older regular function, the best bound we can hope for is $\del^H$, which is clearly not enough when $H$ is small.  However, this bound can be significantly improved  if $\eta^2$ and $\wh\eta^2$ have additional structure, as in   \eqref{eq:eta}. This is  therefore the first (and only) place in this paper where the assumption \eqref{eq:eta} is used. Leveraging \eqref{eq:eta} into a sufficiently good bound in \eqref{eq:Q-diff} is still nontrivial, so we complete the proof of Lemma~\ref{lem:a} in Section~\ref{sec:approx2}.

\begin{proof}[Proof of Proposition~\ref{prop:approx2}]
	Proposition~\ref{prop:approx2} follows by combining Lemma~\ref{lem:Cs}, Equation~\eqref{eq:main-dec}, Lemma~\ref{lem:QV} and Lemma~\ref{lem:a}.
\end{proof} 

\subsection{Multivariate stable convergence in law}\label{sec:CLT2}

In a first step, we carry out a few approximations of $M^{n,\ell,k_n}_1(t)$, $M^{n,\ell,k_n}_2(t)$, $M^{n,\ell,k_n}_{31}(t)$, $M^{n,\ell,k_n}_{32}(t)$ and $M^{\prime n,\ell,k_n}_3(t)$. The proof can be found in Section~\ref{sec:details}.
\begin{prop}\label{prop:Mapprox}
	Under the conditions of Theorem~\ref{thm:CLT-main}, we have 
	\begin{equation}\label{eq:approx} 
		\begin{split}
			&\del^{-(1-\kappa)/2} M^{n,\ell,k_n}_1(t) \approx \sum_{j=1}^{[t/\del]}  \zeta^{n,j,\ell,k_n}_1, \qquad
			\del^{-(1-\kappa)/2} M^{n,\ell,k_n}_2(t)	\approx \sum_{j=1}^{[t/\del]}  \zeta^{n,j,\ell,k_n}_2, \\
			&	\del^{-(1-\kappa)/2} (M^{n,\ell,k_n}_{31}(t)+M^{n,\ell,k_n}_{32}(t)+M^{\prime n,\ell, k_n}_3(t))	\approx \sum_{j=1}^{[t/\del]}  \zeta^{n,j,\ell,k_n}_{3},
		\end{split}
	\end{equation}
	where, with the notation $\xi(t)=((1-3\lvert t\rvert)\vee (\lvert t\rvert -1))\bone_{[-1,1](t)}$, 
	\begin{equation}\label{eq:wtzeta1}\begin{multlined}[][0.9\textwidth]
			\zeta^{n,j,\ell,k_n}_1=8 \del^{-(1-\kappa)/2}(k_n\del)^{-1-2H} \int_{(j-1)\del}^{j\del} \si_{(j-1)\del}^4 \int_{([s/\del]-(\ell+2)k_n+1)\del}^{([s/\del]-(\ell-2)k_n)\del}\\
			\xi(\tfrac{[r/\del]-[s/\del]+\ell k_n}{2k_n}) (W_{r}-W_{[r/\del]\del}) dW_r (W_{s}-W_{[s/\del]\del})dW_s
	\end{multlined}\end{equation}
	and
	\begin{equation}\label{eq:wtzeta2}\begin{split}
			\zeta^{n,j,\ell,k_n}_2	
			&=\del^{-(1-\kappa)/2}\int_{(j-1)\del}^{j\delta_n}\int_{r-k_n\del^{1-\eps}}^r\int_{-2}^\infty \Bigl\{\Delta^{2}_{1}G_H(v)\Delta^{2}_{1}G_H(v+\tfrac{r-u}{k_n\del}+\ell)  \\
			&\quad		+ \Delta^2_1 G_H(v)\Delta^2_1G_H(v+\tfrac{r-u}{k_n\del}-\ell)\Bigr\} dv \bta_{(j-1)\del}d\bW_u \bta_{(j-1)\del}d\bW_r
	\end{split}\end{equation}
	and $\zeta^{n,j,\ell,k_n}_{3}=\zeta^{n,j,\ell,k_n}_{31}+ \zeta^{n,j,\ell,k_n}_{32}$ with
	\begin{equation}\label{eq:wtzeta3} \begin{split}
			\zeta^{n,j,\ell,k_n}_{31}&=-2(k_n\del)^{-1/2-H}\del^{-(1-\kappa)/2}\int_{(j-1)\del}^{j\del} \si_{(j-1)\del}^2  \\
			&\quad\times\begin{multlined}[t][0.75\textwidth]  \int_{s-k_n\del^{1-\eps}}^{(j-1)\del} \int_0^1 \Bigl\{ \Delta^3_1 G_H(\tfrac{s-r}{k_n\del} + \ell -u-1) \\
				+\Delta^3_1 G_H(\tfrac{s-r}{k_n\del} - \ell -u-1)\Bigr\}du \bta_{(j-1)\del} d\bW_r  (W_s-W_{(j-1)\del})dW_s,\end{multlined}\\
			\zeta^{n,j,\ell,k_n}_{32}&=\begin{multlined}[t][0.8\textwidth]-2(k_n\del)^{-1/2-H}\del^{-(1-\kappa)/2}\int_{(j-1)\del}^{j\del} \si_{(j-1)\del}^2 \int_{r-(\ell+2)k_n\del}^{(j-1)\del}  \\
				\int_0^1  \Delta^3_1 G_H( \ell-\tfrac{r-s}{k_n\del} -u-1) du (W_s-W_{[s/\del]\del})dW_s\bta_{(j-1)\del} d\bW_r.\end{multlined}
		\end{split}\!\!\!\!
	\end{equation}
\end{prop}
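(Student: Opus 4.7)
The plan is to derive the three approximations in \eqref{eq:approx} via a single three-step scheme applied to each of the five $M$-terms. First, I would swap the outer $\sum_i$ with the one or two stochastic integrals so that the kernel in the integration variables $(r,u)$ becomes a Riemann sum in $i$ for an integral over $v\in[-2,\infty)$. Second, I would freeze each volatility-related coefficient ($\si$, $\bta=(\eta,\wh\eta)$) at the left endpoint $(j-1)\del$ of the $\del$-block containing the outer variable $r$. Third, I would truncate the inner stochastic integral over $u$ to the window $[r-k_n\del^{1-\eps},r]$ for some small $\eps>0$. The output is then a sum over $j=1,\dots,[t/\del]$ of per-block contributions, which are exactly the $\zeta^{n,j,\ell,k_n}_\bullet$ in \eqref{eq:wtzeta1}--\eqref{eq:wtzeta3}.

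For $M^{n,\ell,k_n}_2$, the homogeneity $\Delta^{2}_{k_n\del}G_H(s)=(k_n\del)^{H+1/2}\Delta^2_1G_H(s/(k_n\del))$ pulls the scaling out of the kernels. Swapping $\sum_i$ with the iterated stochastic integral yields a kernel in $(r,u)$ of the form $\frac{1}{k_n}\sum_i \Delta^2_1 G_H(\tfrac{i-\{r/\del\}}{k_n})\Delta^2_1 G_H(\tfrac{i-\{u/\del\}}{k_n}+\tfrac{r-u}{k_n\del}\pm\ell)$ (up to the symmetric counterpart), and the Riemann-to-integral identity \eqref{eq:al} rewrites this as $\int_{-2}^\infty\Delta^2_1 G_H(v)\Delta^2_1 G_H(v+\tfrac{r-u}{k_n\del}\pm\ell)\,dv$ plus a negligible mesh error; freezing $\bta$ at $(j-1)\del$ and restricting to $u\in[r-k_n\del^{1-\eps},r]$ then yields \eqref{eq:wtzeta2}. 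For $M^{n,\ell,k_n}_1$, the same scheme runs with $\chi$-kernels replacing $\Delta^2_1 G_H$: since $\chi$ is bounded and compactly supported, the discrete convolution $\sum_i\chi(\tfrac{[s/\del]-i+1}{2k_n-1})\chi(\tfrac{[r/\del]-i-\ell k_n+1}{2k_n-1})$ collapses to $k_n\,\xi(\tfrac{[r/\del]-[s/\del]+\ell k_n}{2k_n})$ with only $O(1)$ boundary corrections, producing \eqref{eq:wtzeta1} after freezing $\si$. The mixed terms $M^{n,\ell,k_n}_{31}+M^{n,\ell,k_n}_{32}+M^{\prime n,\ell,k_n}_3$ combine both mechanisms, with the further observation that the outer $dr$-integral over a single $\del$-block absorbs one forward difference, converting a factor $\Delta^2_1 G_H$ into $\int_0^1\Delta^3_1 G_H(\cdot-u)\,du$, matching the kernel in \eqref{eq:wtzeta3}.

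The main obstacle is the $L^1(\sup_t)$-control of the errors in steps (i) and (ii). A naive pointwise Riemann-sum bound fails because $\Delta^2_1 G_H$ has only $(1+|v|)^{H-3/2}$ decay and limited smoothness near the origin; the remedy is to apply the It\^o isometry on the stochastic integrals first, which reduces the question to a deterministic $L^2$-Riemann-sum estimate against the integrable tail $(1+|v|)^{2H-3}$. The freezing of $\bta$ at $(j-1)\del$ uses the decomposition \eqref{eq:ci} to split the increment $\bta_u-\bta_{(j-1)\del}$ into a martingale part (controlled by BDG at rate $(u-(j-1)\del)^{H_\eta\wedge H_{\wh\eta}}$) and a drift part (handled via \eqref{eq:asbound} and \eqref{eq:Holder-2}); both contributions are absorbed by the prefactor $\del^{-(1-\kappa)/2}(k_n\del)^{-1-2H}$ once the deterministic kernel is integrated out. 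The truncation in step (iii) is less delicate: the tail bound $|\int_{-2}^\infty \Delta^2_1 G_H(v)\Delta^2_1 G_H(v+\alpha)\,dv|\lec|\alpha|^{H-3/2}$ for large $\alpha$, combined with $\kappa\leq\frac12$, yields a truncation error of order $\del^{\eps(2-2H)}\to 0$ uniformly in $\kappa\in[\frac{2H}{2H+1},\frac12]$, which is absorbed after optimizing $\eps>0$.
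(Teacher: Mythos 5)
Your scheme for $M^{n,\ell,k_n}_1$ and $M^{n,\ell,k_n}_2$ is essentially the paper's argument: interchange the sum over $i$ with the stochastic integrals so that the statistic becomes a martingale-difference array indexed by the $\del$-blocks, use the homogeneity of $\Delta^2_{k_n\del}G_H$ together with the identity \eqref{eq:al} to replace the Riemann sum in $i$ by the $dv$-integral, freeze $\si$ and $\bta$ at $(j-1)\del$, and truncate the inner integral. Two caveats. First, the license to make these modifications comes from first proving $\E[\lvert\wt\zeta^{n,j}\rvert^p]\lec\del^{p/2}$ and then exploiting that the \emph{error} arrays are again martingale differences across blocks: a purely term-by-term $L^1$ bound on, say, the freezing error (of order $\del^{1/2}(k_n\del^{1-\eps})^{H}$ per block, hence $\del^{-1/2}(k_n\del^{1-\eps})^{H}$ after summing) does not go to zero for small $H$, so the across-block orthogonality must be invoked explicitly, not just ``absorbed by the prefactor.'' Second, a bookkeeping slip: the discrete $\chi$-convolution equals $2k_n\,\xi(\tfrac{m}{2k_n})$, not $k_n\,\xi(\tfrac{m}{2k_n})$ (at lag $0$ it has $2k_n$ terms equal to $1$ and $\xi(0)=1$); with your constant the prefactor in \eqref{eq:wtzeta1} would come out as $4$ instead of $8$.

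The genuine gap is in the mixed term. Your mechanism --- ``the outer $dr$-integral over a single $\del$-block absorbs one forward difference, converting $\Delta^2_1G_H$ into $\int_0^1\Delta^3_1G_H(\cdot-u)\,du$'' --- cannot work: the three-fold difference $\Delta^3_1G_H$ in \eqref{eq:wtzeta3} has unit spacing in the variable measured in units of $k_n\del$, whereas integrating $r$ (or $s$) over a block of length $\del$ can only generate differences of size $1/k_n$ in that variable, which vanish in the limit. In the paper the extra difference is produced by the $\chi$-weights carried by $M^{n,\ell,k_n}_{31}$, $M^{n,\ell,k_n}_{32}$ and $M^{\prime n,\ell,k_n}_{3}$, i.e.\ by the first-difference structure of the two spot-volatility windows: after the Riemann-sum step, the window sum $\frac1{k_n}\sum_{i=0}^{2k_n-1}\chi(\tfrac{i}{2k_n-1})\,\Delta^2_1G_H(x-\tfrac{i}{k_n})$ converges to $\int_0^2\chi(\tfrac u2)\Delta^2_1G_H(x-u)\,du=-\int_0^1\Delta^3_1G_H(x-u-1)\,du$, because $\chi=-1$ on the first half of the window and $+1$ on the second; this identity is also what produces the minus signs in \eqref{eq:wtzeta3}. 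Without it, your three-step scheme does not arrive at $\zeta^{n,j,\ell,k_n}_{31}$, $\zeta^{n,j,\ell,k_n}_{32}$. Note also that in these terms the inner integrals are cut at $(j-1)\del$ rather than at $r$, and the pieces over $((j-1)\del,s)$ resp.\ $((j-1)\del,r)$ have to be shown negligible by a separate power-counting argument; this is not delivered by your generic truncation step.
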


Now let us define 
\begin{equation}\label{eq:triarr} 
	\bzeta^n_t = \begin{pmatrix} \bzeta^{n,\ell_1,k_n^{(1)}}_t \\ \vdots\\ \bzeta^{n,\ell_d,k_n^{(d)}}_t   \end{pmatrix} = \sum_{j=1}^{[t/\del]} \begin{pmatrix}  \zeta^{n,j,\ell_1,k_n^{(1)}}_1 & \zeta^{n,j,\ell_1,k_n^{(1)}}_2 &\zeta^{n,j,\ell_1,k_n^{(1)}}_3  \\ \vdots & \vdots&\vdots\\  \zeta^{n,j,\ell_d,k_n^{(d)}}_1 & \zeta^{n,j,\ell_d,k_n^{(d)}}_2 &\zeta^{n,j,\ell_d,k_n^{(d)}}_3 \end{pmatrix}.
\end{equation}
By \eqref{eq:wtzeta1}, \eqref{eq:wtzeta2} and \eqref{eq:wtzeta3}, we see that the $j$th matrix on the right-hand side of \eqref{eq:triarr} is $\calf_{j\del}$-measurable with a zero $\calf_{(j-1)\del}$-conditional expectation. In conjunction with the fact that 
$$\calz^n \approx \bzeta^n \boldsymbol{1},\qquad \boldsymbol{1}=(1,1,1)^T,$$
which follows from Propositions~\ref{prop:approx1}, \ref{prop:approx2}, \ref{prop:approx3} and \ref{prop:Mapprox}, we can complete the proof of Theorem~\ref{thm:CLT-main} using a stable CLT for martingale arrays (see \cite[Theorem~2.2.15]{JP}) upon showing the following:
\begin{enumerate}
	\item For any $t>0$, $m,m'\in \{1,\dots,d\}$ and $\nu,\nu'\in\{1,2,3\}$ such that $\nu\neq\nu'$, we have
	\begin{align}\label{eq:cov}
		\sum_{j=1}^{[t/\del]} \E[\zeta^{n,j,\ell_m,k_n^{(m)}}_\nu\zeta^{n,j,\ell_{m'},k_n^{(m')}}_\nu \mid \calf_{(j-1)\del}]	&\stackrel{\P}{\longrightarrow} \ga_\nu^{\ell_m,\theta_m,\ell_{m'},\theta_{m'}}(H)\Ga_\nu(t), \\
		\sum_{j=1}^{[t/\del]} \E[\zeta^{n,j,\ell_m,k_n^{(m)}}_\nu\zeta^{n,j,\ell_{m'},k_n^{(m')}}_{\nu'} \mid \calf_{(j-1)\del}]	&\stackrel{\P}{\longrightarrow} 0.\label{eq:cov2}
	\end{align}
	\item For any $m\in \{1,\dots,d\}$, $\nu\in\{1,2,3\}$ and $t>0$, we have
	\begin{equation}\label{eq:tight} 
		\sum_{j=1}^{[t/\del]} \E[(\zeta^{n,j,\ell_m,k_n^{(m)}}_\nu)^4 \mid \calf_{(j-1)\del}]\stackrel{\P}{\longrightarrow} 0.
	\end{equation}
	\item If $N\in\{W,\wh W\}$ or $N$ is  a bounded martingale on $(\Om,\calf,\F,\P)$ that is orthogonal in the martingale sense to both $W$ and $\wh W$, then
	\begin{equation}\label{eq:stable} 
		\sum_{j=1}^{[t/\del]} \E[\zeta^{n,j,\ell_m,k_n^{(m)}}_\nu(N_{j\del}-N_{(j-1)\del}) \mid \calf_{(j-1)\del}]\stackrel{\P}{\longrightarrow} 0.
	\end{equation}
\end{enumerate}

The proof of these three properties will be given in Section~\ref{sec:details}. This completes the proof of Theorem~\ref{thm:CLT-main}.\qed

\begin{appendix}
	
	\section{Details of the proof of Proposition~\ref{prop:approx2}}\label{sec:approx2}
	
	We start with a lemma on the regularity of the process $A$ from \eqref{eq:Ai}.
	\begin{lemma}\label{lem:Ai}
		For any $T>0$, we have that 
		\begin{align*}
			&	\E[(A_{t+h}-A_t)^2]^{1/2}+\E[(A^\eta_{t+h}-A^\eta_t)^2]^{1/2}+\E[(A^{\wh\eta}_{t+h}-A^{\wh\eta}_t)^2]^{1/2}\\
			&\qquad\lec  \lc(1+t^{-H})h^{(2H+1/2)\wedge1}\rc \wedge h^{H},
		\end{align*} 
		with a constant that is uniform for $t\in[0,T]$, $h>0$ and $i=0,\dots,L$.  
		If $H=\frac12$, the previous bound can be improved to 
		\[ \E[(A_{t+h}-A_t)^2]^{1/2}+\E[(A^\eta_{t+h}-A^\eta_t)^2]^{1/2}+\E[(A^{\wh\eta}_{t+h}-A^{\wh\eta}_t)^2]^{1/2}\lec   h.
		\] 
	\end{lemma}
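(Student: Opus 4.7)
The plan is to leverage the simplification afforded by Assumption~\ref{ass:CLT'}, under which $g_0$, $\wt g_0$ and their companions vanish, reducing \eqref{eq:Ai} to $A_t = \int_0^t a_s\,ds + B_t$ with $B_t = \int_0^t g_{\wt H}(t-s)\wt\eta_s\,ds$, and analogously for $A^\eta$, $A^{\wh\eta}$ with $\wt\theta$, $\wt\vartheta$ replacing $\wt\eta$ and $\wt H_\eta$, $\wt H_{\wh\eta}$ replacing $\wt H$. The drift contribution $\int_t^{t+h}a_s\,ds$ is $O(h)$ by \eqref{eq:asbound} and dominated by either target bound, and the case $H=\tfrac12$ is immediate because Assumption~\ref{ass:CLT'} then forces $\wt\eta\equiv\wt\theta\equiv\wt\vartheta\equiv 0$, leaving only the Lipschitz drift. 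For the remaining case $H\in(0,\tfrac12)$, I focus on $B_t$; since $\wt\theta$ and $\wt\vartheta$ satisfy the same Hölder bound \eqref{eq:Holder-2} as $\wt\eta$ and $\wt H_\eta,\wt H_{\wh\eta}\in[H,\tfrac12)$, the argument for $A^\eta$ and $A^{\wh\eta}$ is verbatim.

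Setting $\alpha=\wt H+\tfrac12\in(H+\tfrac12,1)$, I center the integrand at $\wt\eta_t$ and write
\begin{equation*}
B_{t+h}-B_t = \frac{K_{\wt H}^{-1}\wt\eta_t}{\alpha}\bigl[(t+h)^\alpha-t^\alpha\bigr] + R_1 + R_2,
\end{equation*}
where
\begin{align*}
R_1 &= K_{\wt H}^{-1}\int_0^t\bigl[(t+h-s)^{\alpha-1}-(t-s)^{\alpha-1}\bigr](\wt\eta_s-\wt\eta_t)\,ds,\\
R_2 &= K_{\wt H}^{-1}\int_t^{t+h}(t+h-s)^{\alpha-1}(\wt\eta_s-\wt\eta_t)\,ds.
\end{align*}
For $R_2$, the Hölder bound \eqref{eq:Holder-2} plus Minkowski yield
$\|R_2\|_{L^2}\lec \int_0^h v^{\alpha-1}(h-v)^H\,dv = B(\alpha,H+1)\,h^{\alpha+H}\lec h^{2H+1/2}$, using $\wt H\geq H$. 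For $R_1$, substituting $u=t-s$ and exploiting monotonicity of $u\mapsto u^{\alpha-1}$,
\begin{equation*}
\|R_1\|_{L^2}\lec \int_0^t \bigl[u^{\alpha-1}-(u+h)^{\alpha-1}\bigr]\,u^H\,du,
\end{equation*}
which I split at $u=h$: on $[0,h]$ the bracket is $\leq u^{\alpha-1}$, producing a contribution $\lec h^{\alpha+H}$; on $[h,t]$ the mean-value theorem bounds the bracket by $(1-\alpha)\,h\,u^{\alpha-2}$, and a direct integration (branching on the sign of $\alpha+H-1$) yields either $h^{\alpha+H}$ or $h\cdot T^{\alpha+H-1}$, both dominated by $h^{(2H+1/2)\wedge 1}$.

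The genuinely delicate piece is the boundary term $\wt\eta_t[(t+h)^\alpha-t^\alpha]$. Using $|\wt\eta_t|\leq K$ from \eqref{eq:asbound} and $|(t+h)^\alpha-t^\alpha|\lec h\,t^{\alpha-1}\wedge h^\alpha$, I run a case analysis over the regimes $t\leq h$, $t\in[h,1]$, and $t\geq 1$, invoking $\alpha-1\geq H-\tfrac12$: when $t\geq 1$, $t^{\alpha-1}$ is bounded and the contribution is $O(h)\lec h^{(2H+1/2)\wedge 1}$; when $t\in[h,1]$, $h\,t^{\alpha-1}\leq h\,t^{H-1/2}=t^{-H}\,h\,t^{2H-1/2}$ fits inside $t^{-H}h^{(2H+1/2)\wedge 1}$ by comparing $t$ to $h$; when $t\leq h$, $h^\alpha\leq h^{H+1/2}\leq t^{-H}h^{2H+1/2}$. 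The secondary $h^H$ bound follows from the classical kernel estimate $\int_0^t|g_{\wt H}(t+h-s)-g_{\wt H}(t-s)|\,ds+\int_0^h g_{\wt H}(v)\,dv=O(h^\alpha)\leq O(h^H)$ combined with $|\wt\eta|\leq K$. The main obstacle is thus not a deep estimate but the careful bookkeeping of the boundary contribution across these regimes, which crucially relies on the assumption $\wt H\geq H$ to convert the raw exponents $\alpha$ and $\alpha+H$ into the target $(2H+1/2)\wedge 1$; everything else is standard fractional-kernel calculus.
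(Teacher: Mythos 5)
Your proof is correct in substance but follows a genuinely different route from the paper. The paper first gets the crude $h^H$ bound on first differences from \eqref{eq:Holder-2}, and then obtains the refined bound indirectly: it estimates the \emph{second-order} increment $\Delta^2_h A_t$, writing $\Delta^2_h(g_{\wt H}\ast\wt\eta)=(\Delta_h g_{\wt H}\ast\Delta_h\wt\eta)$ and splitting the convolution into the three regions $(-h,0)$, $(t,t+h)$, $(0,t)$ to get $(1+t^{-H})h^{H+\wt H+1/2}$, after which it invokes \cite[Proposition 2]{RS04} to transfer this back to first-order increments. You instead work directly with the first difference, isolating the boundary term $\wt\eta_t[(t+h)^\alpha-t^\alpha]$ (which is exactly what produces the $t^{-H}$ factor and what a naive first-difference estimate would miss) and controlling the two remainders $R_1,R_2$ by elementary fractional-kernel calculus; your centering at $\wt\eta_t$ plays the role that the extra difference $\Delta_h\wt\eta$ plays in the paper. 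What your approach buys is self-containedness (no appeal to the external transfer result of \cite{RS04}) and a transparent account of where the exponent $(2H+\frac12)\wedge 1$ and the singular factor $t^{-H}$ come from; what the paper's approach buys is that the second difference annihilates the awkward boundary contribution automatically, at the cost of the additional citation. Two small caveats: your dichotomy ``either $h^{\alpha+H}$ or $h\,T^{\alpha+H-1}$'' for the $[h,t]$ part of $R_1$ omits the borderline $\wt H+H=\frac12$, where the integral yields $h\log(T/h)$; this is harmless (absorbed into $h^{2H+1/2}$) except at the single point $H=\wt H=\frac14$, where the target exponent is exactly $1$ --- but the paper's own route faces the same critical case ($H+\wt H+\frac12=1$ in its second-difference bound) and glosses over it via the citation, so this is a shared knife-edge issue rather than a defect of your method. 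Likewise, inequalities such as $h^{\alpha+H}\lec h^{2H+1/2}$ and $O(h)\lec h^{(2H+1/2)\wedge1}$ implicitly take $h$ bounded, which is also implicit in the paper's proof and in its applications (there $h\asymp k_n\delta_n$).
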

	\begin{proof} The statement is obvious for $H=\frac12$, so we assume $H\in(0,\frac12)$ in the following. We only consider increments of $A_t$; the bounds for $A^\eta$ and $A^{\wh\eta}$ can be derived in the same way.
		Since the first term in the definition of $A_t$ is differentiable almost surely with $L^2$-bounded derivative, we only need to consider the second term. To avoid introducing additional notation, we  assume that $g_0'\equiv \wt g'_0\equiv0$  such that $A_t=(g_{\wt H}\ast \wt\eta)(t)$, where $(f\ast g)(t)=\int_\RR f(t-s)g(s)ds$ denotes the convolution of two integrable functions. Note that we used the convention $\wt\eta^{(i)}_s=0$ for $s<0$. 
		Since 
		\begin{align*}
			\E[(A_{t+h}-A_t)^2]^{1/2}&\leq \int_0^t \E[(\wt\eta_{t+h-s}-\wt\eta_{t-s})^2]^{1/2} g_{\wt H}(s) ds + \int_t^{t+h} \! \E[(\wt\eta_{t+h-s})^2]^{1/2} g_{\wt H}(s) ds \\
			&\lec h^H + [(t+h)^{\wt H +1/2}-t^{\wt H+1/2}]\lec h^H + h^{\wt H +1/2}\lec h^H,
		\end{align*}
		we have shown the second upper bound.
		To get the first one, observe that 
		\begin{align*}
			\E[(\Delta^2_h A_t)^2]^{1/2} &= \E[(\Delta^2_h (g_{\wt H}\ast \wt\eta)(t))^2]^{1/2}=\E[(\Delta_h g_{\wt H}\ast \Delta_h\wt\eta)(t))^2]^{1/2} \\
			&\leq \int_{-h}^{t+h} \lvert  g_{\wt H}(s+h)-g_{\wt H}(s)\rvert \rvert \E[(  \wt \eta_{t+h-s}-  \wt \eta_{t-s})^2]^{1/2} ds.
		\end{align*}
		The last integral splits into three parts, according to whether $s\in(-h,0)$, $s\in(t,t+h)$ or $s\in(0,t)$. Bounding them by
		\begin{align*}
			\int_{-h}^0 g_{\wt H}(s+h)\E[(  \wt \eta_{t+h-s}-  \wt \eta_{t-s})^2]^{1/2}ds &\lec  h^H\int_{-h}^0 g_{\wt H}(s+h) ds\lec h^{H+\wt H +1/2}, \\
			\int_t^{t+h}  \lvert g_{\wt H}(s+h)-g_{\wt H}(s)\rvert   \E[(  \wt \eta_{t+h-s} )^2]^{1/2} ds&\lec \int_t^{t+h}  \lvert  g_{\wt H}(s+h)-g_{\wt H}(s)\rvert ds \\
			&\lec h^{\wt H+1/2}\wedge
			t^{\wt H-3/2} h^2\leq t^{-H} h^{H+\wt H+1/2},\\
			\int_0^t \lvert  g_{\wt H}(s+h)-g_{\wt H}(s)\rvert   \E[(  \wt \eta_{t+h-s}-  \wt \eta_{t-s})^2]^{1/2} ds & \leq h^H \int_0^t \lvert  g_{\wt H}(s+h)-g_{\wt H}(s)\rvert   ds \\
			&\lec h^{H+\wt H+1/2},
		\end{align*}
		we obtain the assertion of the lemma  from \cite[Proposition 2]{RS04}.
	\end{proof}
	
	\begin{proof}[Proof of Lemma~\ref{lem:Cs}] We start with  $H\in(0,\frac12)$.
		Because 
		\begin{equation}\label{eq:Delta-G-2} 
			\Delta^{2}_{1}G_H(u)\lec u^{H-3/2}\wedge 1,
		\end{equation} we have that 
		\begin{equation}\label{eq:C1}\begin{split}
				\E[\lvert D^n_{1,i}\rvert^2]^{1/2}	&\lec (k_n\del)^{1/2+H}\biggl( \int_0^{(i-1+2k_n)\delta_n} (\Delta^{2}_{1}G_H(\tfrac{i-1-r/\del}{k_n}))^2 dr\biggr)^{1/2}   \\
				&\lec (k_n\del)^{1+H} \biggl( \int_{-2}^{\frac{i-1}{k_n}} (\Delta^2_1 G_H(u))^2 du  \biggr)^{1/2} \lec (k_n\del)^{1+H},  \end{split}
		\end{equation} 
		uniformly in $i$. 
		Furthermore, by Lemma~\ref{lem:Ai},
		\begin{equation}\label{eq:C2}  \begin{split}
				\E[\lvert D^n_{2,i}\rvert^2]^{1/2}&\lec  (k_n\del)^{(2H+1/2)\wedge 1} \int_{(i-1)\del}^{(i-1+k_n)\del} s^{-H} d s\\
			\end{split}
		\end{equation}
	so it follows from the mean-value theorem and the Cauchy--Schwarz inequality  that the difference $Z^{n,\ell}_2(t)-\wt Z^{n,\ell}_2(t)$ 
is of order $(k_n\del)^{-1-2H}k_n^{-1}(k_n\del)^{1+H}(k_n\del)^{(2H+1/2)\wedge 1} \times\sum_{i=1}^{[t/\delta_n]} \int_{(i-1)\del}^{(i-1+k_n)\del} s^{-H} d s$. Since $\sum_{i=1}^{[t/\delta_n]} \int_{(i-1)\del}^{(i-1+k_n)\del} s^{-H} d s = O(k_n \int_0^ts^{-H} ds) = O(k_n)$, it follows that    $Z^{n,\ell}_2(t)-\wt Z^{n,\ell}_2(t)=o_\P((k_n\del)^{1/2})$ for all $H\in(0,\frac12)$.
		
		If $H=\frac12$, we note that $$D^n_{1,i}=\int_{(i-1)\del}^{(i-1+2k_n)\del} \Delta^2_{k_n\del} G_{1/2}((i-1)\del-r)\bta_r d\bW_r = O_\P((k_n\del)^{3/2})$$ and $D^n_{2,i}=O_\P((k_n\del)^2)$. Thus, decomposing
		\begin{align*}
			(k_n\del)^{-1/2}(Z^{n,\ell}_2(t)-\wt Z^{n,\ell}_2(t))	&= (k_n\del)^{-2}\frac1{k_n}  \sum_{i=1}^{[t/\del]-(\ell+2)k_n+1} D^n_{1,i} D^n_{2,i+\ell k_n}  \\
			&\quad+(k_n\del)^{-2}\frac1{k_n}  \sum_{i=1}^{[t/\del]-(\ell+2)k_n+1} D^n_{2,i} D^n_{1,i+\ell k_n}\\
			&\quad+(k_n\del)^{-2}\frac1{k_n}  \sum_{i=1}^{[t/\del]-(\ell+2)k_n+1} D^n_{2,i} D^n_{2,i+\ell k_n},
		\end{align*}
		we easily notice that the last term is $O_\P(k_n\del)$ and therefore negligible. Let us consider the first expression on the right-hand side; the second one can be treated similarly. Bounding term by term, we notice that it is of order $O_\P(1)$. This means two things: to show convergence to zero, we need to find a better way of bounding this expression. But at the same time, we are allowed to make any modification that leads to an $o_\P(1)$ error. In particular, thanks to \eqref{eq:b-2}, we may replace $D^n_{2,i+\ell k_n}$ by (recall that we may assume $A_t=\int_0^t a_s ds$)
		$$ \wt D^n_{2,i+\ell k_n}= \int_{(i-1+\ell k_n)\del}^{(i-1+(\ell+1)k_n)\del} \int_s^{s+k_n\del} a_{(i-1)\del} dr ds, $$
		which has the advantage that it is $\calf_{(i-1)\del}$-measurable. Therefore, the product $D^n_{1,i} D^n_{2,i+\ell k_n}$ is $\calf_{(i-1+2k_n)\del}$-measurable with zero $\calf_{(i-1)\del}$-conditional expectation. By a martingale size estimate (see \cite[Appendix A]{CDL22}), it follows that 
		\begin{equation*} \begin{aligned}[b]
				&\E\biggl[\sup_{t\in[0,T]} \biggl\lvert (k_n\del)^{-2}\frac1{k_n}  \sum_{i=1}^{[t/\del]-(\ell+2)k_n+1} D^n_{1,i} \wt D^n_{2,i+\ell k_n} \biggr\rvert\biggr] \\
				&\qquad\lec (k_n\del)^{-2}\frac1{k_n} (k_n/\del)^{1/2} (k_n\del)^{3/2}(k_n\del)^2= k_n\del\to0. \end{aligned}\qedhere
		\end{equation*}
	\end{proof}
	
	\begin{proof}[Proof of Lemma~\ref{lem:QV}]
		We first remove $\vee (-[r/\del])$ from the lower bound of $i$. Since this is only relevant for $r\leq 2k_n\del$ and the two $\Delta^2_1G_H$-terms are uniformly bounded for $i\in\{1-2k_n,\dots, 0\}$, this removal only incurs an error of order $k_n^{-1}(k_n\del)k_n=k_n\del$, which is smaller than the desired convergence rate of $(k_n\del)^{1/2}$.
		
		It remains to replace the upper bound of the sum by $+\infty$. In order to justify this, observe that 
		\begin{equation}\label{eq:Delta-G} 
			\lvert \Delta^2_1 G_H(\tfrac{i-\{r/\del\}}{k_n})\rvert\lec \bigl(\tfrac{i}{k_n}\bigr)^{H-3/2}\wedge 1\qquad (\text{and } \lec \bone_{\{i\leq 2k_n+1\}} \text{ if } H=\tfrac12),
		\end{equation}
		uniformly in $n$, $i$ and $r$. If $H\in(0,\frac12)$, we now choose
		some $p>1+(1-\kappa)/(4\kappa (1-H))$. For any  $\kappa\in[\frac{2H}{2H+1},\frac12]$, if $p$ is sufficiently close to the lower bound, we still have $k_n^p\del\to0$. So if we consider the two cases $t-r\geq k_n^p\del$ and $t-r\leq k_n^p\del$ separately, we observe in the former case that   
		\begin{equation}\label{eq:p1} 
			\begin{split}
				&\E	\biggl[\biggl\lvert\frac{1}{k_n} \int_0^{t-k_n^p\del} \sum_{i=[t/\del]-[r/\del]-(\ell+2)k_n+1}^\infty \Delta^{2}_{1}G_H(\tfrac{i-\{r/\del\}}{k_n})  \Delta^{2}_{1}G_H(\tfrac{i+\ell k_n-\{r/\del\}}{k_n}) \lvert \bta_r\rvert^2dr \biggr\rvert\biggr]\\
				&\quad \lec \frac{1}{k_n}\sum_{i=k_n^p/2}^\infty \biggl(\frac{i}{k_n}\biggr)^{2H-3}\int_0^{t-k_n^p\del} \E[\lvert \bta_r\rvert^2]dr\lec k_n^{(2-2H)(1-p)}=o((k_n\del)^{1/2})
			\end{split}
		\end{equation}
		by our choice of $p$. 
		If $t-r\leq k_n^p\del$, we pick some $\eps>0$ to be specified later and, for the moment, small enough such that we have the bound 
		$$	\lvert \Delta^2_1 G_H(\tfrac{i-\{r/\del\}}{k_n})\Delta^2_1 G_H(\tfrac{i+\ell k_n-\{r/\del\}}{k_n})\rvert \leq \bigl(\tfrac{i}{k_n}\bigr)^{2H-3}\wedge 1 \leq \bigl(\tfrac{i}{k_n}\bigr)^{-1-\eps}\wedge 1.$$
		Then
		\begin{equation}\label{eq:bound}\begin{split}
				&\E	\biggl[\biggl\lvert\frac{1}{k_n} \int_{t-k_n^p\del}^{([t/\del]-\ell k_n)\delta_n} \sum_{i=[t/\del]-[r/\del]-(\ell+2)k_n+1}^\infty \Delta^{2}_{1}G_H(\tfrac{i-\{r/\del\}}{k_n}) \\
				&\quad\times \Delta^{2}_{1}G_H(\tfrac{i+\ell k_n-\{r/\del\}}{k_n})\lvert \bta_r\rvert^2dr \biggr\rvert\biggr] \lec  \frac1{k_n} k_n^{1+\eps}  \int_{t-k_n^p\del}^{([t/\del]-\ell k_n)\delta_n}\E[\lvert \bta_r\rvert^2]dr\lec k_n^{\eps+p}\del.
			\end{split}\!\!\!
		\end{equation}
		The reader can   verify that for any $H\in(0,\frac12)$, if $p$ is close enough to $1+(1-\kappa)/(4\kappa (1-H))$ and $\eps$ is small enough, then $k_n^{\eps+p}\del=o((k_n\del)^{1/2})$. 
		
		If $H=\frac12$, we choose $p\in(1,\frac32)$. By \eqref{eq:Delta-G}, the left-hand side of \eqref{eq:p1} is simply zero because $p>1$. Similarly, the summation in \eqref{eq:bound} only involves $O(k_n)$ many terms, so the left-hand side of \eqref{eq:bound} is $O(k_n^p\del)$, which is $(k_n\del)^{1/2}$ since $p<\frac32$.
	\end{proof}
	
	\begin{proof}[Proof of Lemma~\ref{lem:Phi}] If $H=\frac12$, we have $\Delta^2_1 G_{1/2}(v)= 0$ for $v\notin(-2,0)$. Thus, $\Phi^{1/2}_\ell=0$ for all $\ell\geq2$. For $H\in(0,\frac12)$,
		it is possible to compute $\Phi^H_\ell$ using properties of fractional Brownian motion and integration by parts. But in order to prepare for upcoming (and more involved) calculations, we show how to obtain \eqref{eq:Phi-2} using Fourier methods. An advantage of this approach is that it yields a formula for arbitrary $\ell\in\R$ (not just $\ell\in\{2,3,\dots\}$), without the need to differentiate between multiple cases. First notice that there is no harm to extend the integral in \eqref{eq:Phi-2} to $-\infty$, because $\Delta^2_1G_H(v)=0$ for all $v<-2$. Therefore, 
		by Parseval's formula, 
		\[ \int_{-2}^\infty \Delta^2_1 G_H(v)\Delta^2_1 G_H(v+\ell) dv = \frac{1}{2\pi} \int_\R \calf[\Delta^2_1G_H](\xi)\overline{\calf[\Delta^2_1G_H](\xi)} e^{-i\ell \xi} d\xi, \]
		where $\calf[\vp](\xi)=\int_\R\vp(x)e^{-ix\xi}d\xi$ denotes the Fourier transform of an $L^2$-function (which can be extended to the space of tempered distributions) and $\overline{z}$ denotes the complex conjugate of $z\in\mathbb{C}$. We need a few definitions and facts regarding Fourier transforms, which can be found in \cite[Section 3.2 and Example~7.1.17]{H90}:	for $\al\in\mathbb{C}\setminus\{0,-1,-2,\dots\}$,
		\begin{equation}\label{eq:fml} 
			\begin{aligned}
				x^\al_{\pm}	&=(\pm x)^\al\bone_{\{\pm x >0\}}, \quad \calf[x^\al_{\pm}](\xi)	=\Ga(\al+1)e^{\mp i\pi(\al+1)/2}(\xi\mp i0)^{-\al-1},  \\
				(x\pm i0)^\al &=x^\al_+ + e^{\pm i\pi \al}x^\al_-, \quad\calf[(x\pm i0)^\al](\xi)=2\pi e^{\pm i\pi\al/2}\Ga(-\al)^{-1}\xi^{-\al-1}_\pm.
			\end{aligned}
		\end{equation}
		In particular, still for $\al\in\mathbb{C}\setminus\{0,-1,-2,\dots\}$,
		\begin{equation}\label{eq:abs} \begin{split}
				\calf[\lvert x\rvert^\al](\xi)&=\Ga(\al+1)(e^{- i\pi(\al+1)/2}(\xi- i0)^{-\al-1}+ e^{ i\pi(\al+1)/2}(\xi+ i0)^{-\al-1})\\
				&=2\Ga(\al+1)\cos(\tfrac{\pi(\al+1)}{2})\lvert\xi\rvert^{-\al-1}.
			\end{split}
		\end{equation}
		Moreover, by the fact that $\calf[\vp(\cdot + h)](\xi)=e^{ih\xi}\calf[\vp](\xi)$, the operator $\Delta^2_1$ in the time domain corresponds to multiplication with 
		$e^{2i\xi}-2e^{i\xi}+1$ in the Fourier domain. Therefore, recalling \eqref{eq:GH}, we have the right-hand side of \eqref{eq:Phi-2} equals
		\begin{align*}
			&\frac{K_H^{-2}\Ga(H+\frac32)^2}{2\pi(H+\frac12)^2}\int_\R e^{-i\ell\xi}e^{-\frac12i\pi(H+\frac32)}(\xi-i0)^{-H-3/2}e^{\frac12i\pi(H+\frac32)}(\xi+i0)^{-H-3/2} \\
			&\quad\times(e^{2i\xi}-2e^{i\xi}+1)(e^{-2i\xi}-2e^{-i\xi}+1) d\xi.
		\end{align*}
		Observe that
		\[ (e^{2i\xi}-2e^{i\xi}+1)(e^{-2i\xi}-2e^{-i\xi}+1)= e^{2i\xi}-4e^{i\xi}+6-4e^{-i\xi}+e^{-2i\xi}=(e^{\frac12i x}-e^{-\frac12i x})^4, \]
		which corresponds to $\delta^4_1$ in the time domain. 
		Moreover, by \eqref{eq:fml},
		\begin{equation}\label{eq:fml-prod}\begin{split}
				(\xi-i0)^{-\al} (\xi+i0)^{-\al}&= \xi_+^{-2\al}+e^{i\pi \al}\xi^{-\al}_-\xi^{-\al}_+ +\xi^{-\al}_+e^{-i\pi\al}\xi^{-\al}_- + \xi_-^{-2\al} \\ &=\xi_+^{-2\al}+ \xi_-^{-2\al}=\lvert \xi\rvert^{-2\al}.\end{split}
		\end{equation}
		Therefore, using the last formula in \eqref{eq:fml} and with the convention that $\delta^4_1$ acts on the variable $\ell$, we obtain
		\begin{align*}
			\Phi^H_\ell&= \frac{K_H^{-2}\Ga(H+\frac32)^2}{2\pi(H+\frac12)^2}\int_\R e^{-i\ell\xi} (\xi-i0)^{-H-3/2} (\xi+i0)^{-H-3/2}(e^{\frac12i x}-e^{-\frac12i x})^4d\xi\\
			& =\frac{\Ga(H+\frac12)^2\Ga(-2H-2)}{2\pi K_H^2}\delta^4_1(e^{i\pi (H+1)}(\ell-i0)^{2H+2} + e^{-i\pi(H+1)}(\ell +i0)^{2H+2}) \\
			&  =\frac{\Ga(H+\frac12)^2\Ga(-2H-2)}{2\pi K_H^2}(e^{i\pi (H+1)}+e^{-i\pi (H+1)})\delta^4_1(\ell_+^{2H+2}+\ell_-^{2H+2})\\
			& =\frac{2\cos(\pi (H+1))\Ga(H+\frac12)^2\Ga(-2H-2)}{2\pi K_H^2} \delta^4_1\lvert \ell\rvert^{2H+2}.
		\end{align*}
		Using \eqref{eq:gH} and properties of the Gamma function, one can show that the factor in front of $\delta^4_1\lvert\ell\rvert^{2H+2}$ is equal to $1/(2(2H+1)(2H+2))$, proving \eqref{eq:Phi-2}.
	\end{proof}
	
	\begin{proof}[Proof of Lemma~\ref{lem:a}]
		Recall  \eqref{eq:ci}. In a first step, we show that the contributions of $A^\eta$ and $A^{\wh\eta}$ to \eqref{eq:Q-diff} are negligible at a rate of $(k_n\del)^{1/2}$. We only consider $A^\eta$, as our arguments apply to   $A^{\wh\eta}$  analogously.  If $j\leq (k_n/\del)^{1/2}$, then   $\sum_{j=1}^{[(k_n/\del)^{1/2}]} \frac{1}{k_n\del} \int_{(j-1)\del}^{j\del}\int_{(j-1)\del}^{r}$ is of size $(\del/k_n)^{1/2}$, the sum over $i$ of the terms in $\{ \cdots\}$ can be bounded by a multiple of $k_n^{1+\eps}$, where $\eps>0$ can be as small as we want (cf.\ \eqref{eq:bound}), and $A^\eta_r-A^\eta_u$ is of size $\del^H$ by Lemma~\ref{lem:Ai}. So in total, the contribution of terms with $j\leq (k_n/\del)^{1/2}$ is of size $ (k_n\del)^{1/2} k_n^\eps\del^{H}$, which is $o((k_n\del)^{1/2})$ if $\eps$ is sufficiently small. If $j>(k_n/\del)^{1/2}$, then $u\geq (k_n\del)^{1/2}$ and therefore, by similar arguments, the contribution of the terms with $j>(k_n/\del)^{1/2}$ is of size $k_n^{-1}k_n^{1+\eps} (k_n\del)^{-H/2} \del^{(2H+1/2)\wedge1}$, which is $o((k_n\del)^{1/2})$ if  $\eps$ is small.
		
		It remains to analyze the expressions $\sum_{j=1}^{[t/\del]-\ell k_n} \xi^n_j$ and $\sum_{j=1}^{[t/\del]-\ell k_n} \wh\xi^n_j$, where
		\begin{equation}\label{eq:Q-mart}\begin{split}
				\xi^n_j&=  \frac{1}{k_n\del} \sum_{i=1-2k_n}^{\infty} \int_{(j-1)\del}^{j\del}\int_{(j-1)\del}^{r}\biggl\{ \Delta^{2}_{1}G_H(\tfrac{i-\{r/\del\}}{k_n})  \Delta^{2}_{1}G_H(\tfrac{i+\ell k_n-\{r/\del\}}{k_n})\\
				&\quad -\Delta^{2}_{1}G_H(\tfrac{i-\{u/\del\}}{k_n})  \Delta^{2}_{1}G_H(\tfrac{i+\ell k_n-\{u/\del\}}{k_n})\biggr\}\int_0^r \Delta_{r-u} g_{H_\eta}(u-s) \btheta_s d\bar \bW_s du  dr
			\end{split}\!\end{equation}
		and $\hat \xi^n_j$ is defined in the same way but with $\vartheta$ instead of $\theta$. Clearly, it suffices to consider $\xi^n_j$.
		To this end, if $H_\eta\in(0,\frac12)$, we consider a sequence of numbers $0=\la_0<\la_1<\dots< \la_Q < \la_{Q+1}=\infty$, whose values shall be determined at a later stage, and define $\la_n^{(q)}=[\del^{-\la_q}]$ for all $q=0,\dots, Q+1$. In particular, $1=\la_n^{(0)} \ll \la_n^{(1)}\ll \cdots\ll \la_n^{(Q)} < \la_n^{(Q+1)}=\infty$. Accordingly, we can define $\xi^{n,q}_j$ by the same formula as in \eqref{eq:Q-mart}, except we replace $\int_0^r \cdots d\ov \bW_s$ by $\int_{(j+1-\la_n^{(q+1)})\del}^{(j+1-\la_n^{(q)})\del\wedge r} \cdots d\ov \bW_s$. Then clearly
		\[ \sum_{j=1}^{[t/\del]-\ell k_n} \xi^n_j = \sum_{q=0}^Q\sum_{j=1}^{[t/\del]-\ell k_n} \xi^{n,q}_j.  \]
		Since $u,r\in((j-1)\del,j\del)$, we have by the mean-value theorem (for $q=1,\dots,Q$) and a change of variables (for $q=0$) that
		\begin{equation}\label{eq:xi-q}\begin{split}
				\int_{(j+1-\la_n^{(q+1)})\del}^{(j+1-\la_n^{(q)})\del\wedge r} (\Delta_{r-u} g_{H_\eta}(u-s))^2 ds&\lec \begin{dcases}
					\del^2	\int_{(j+1-\la_n^{(q+1)})\del}^{(j+1-\la_n^{(q)})\del} (u-s)^{2{H_\eta}-3} ds	&\text{if } q\geq1, \\
					\del^{2{H_\eta}} \int_{j+1-\la_n^{(1)}}^{r/\del} (\Delta_{\frac{r-u}{k_n\delta_{n}}}g_{H_\eta}(\tfrac{u}{\del}-s))^{2} ds	& \text{if } q=0.
				\end{dcases}\\
				& \lec \del^{2{H_\eta}}(\la_n^{(q)})^{2{H_\eta}-2}, 
			\end{split}\raisetag{-4\baselineskip}\end{equation}
		which, in combination with previous arguments for the contribution of $A^{\eta}$, shows that
		\begin{equation}\label{eq:size-xi} 
			\E[(\xi^{n,q}_j)^2]^{1/2}\lec k_n^{\eps}\del^{1+{H_\eta}}(\la_n^{(q)})^{{H_\eta}-1}
		\end{equation}
		uniformly in $n$ and $j$, with arbitrarily small $\eps>0$. Next, observe that $\xi^n_j$ is $\calf_{j\del}$-measurable with $\E[\xi^n_j \mid \calf_{(j+1-\la_n^{(q+1)})\del}]=0$. Therefore, using a martingale size estimate for $q=0,\dots, Q-1$ and a standard size estimate for $q=Q$ (see \cite[Appendix A]{CDL22}), we obtain 
		\begin{equation}\label{eq:sum-xi-q} \begin{split}
				&\E\biggl[\sup_{t\in[0,T]}\biggl(\sum_{j=1}^{[t/\del]-\ell k_n} \xi^{n,q}_j\biggr)^2\biggr]^{1/2}\\
				&\qquad\lec \begin{cases}
					(\la_n^{(q+1)}/\del)^{1/2}k_n^{\eps}\del^{1+{H_\eta}}(\la_n^{(q)})^{{H_\eta}-1}	&\text{if } q\leq Q-1, \\
					k_n^\eps \del^{H_\eta} (\la_n^{(Q)})^{{H_\eta}-1}	&\text{if } q=Q. 
			\end{cases} \end{split}
		\end{equation}
		We want this to go to zero faster than $(k_n\del)^{1/2}$ for all $q=0,\dots,Q$. Because we can replace $\eps$ by $\frac12\eps$ in the last display, it suffices to start with $\la_0=0$ and then define $\la_1,\la_2,\dots$ iteratively using the relation
		\begin{equation}\label{eq:iteration} \begin{split}
				&-\tfrac12 \la_{q+1} +(\tfrac12-\eps)\kappa  +{H_\eta}+(1-{H_\eta})\la_q=0\\
				&\qquad \iff \la_{q+1}=(1-2\eps)\kappa+2{H_\eta}+2(1-{H_\eta})\la_q.\end{split}
		\end{equation}
		The solution to this recurrence equation is 
		\begin{equation}\label{eq:la-q} 
			\la_q= \frac{((1-2\eps)\kappa+2{H_\eta})((2-2{H_\eta})^q-1)}{1-2{H_\eta}},
		\end{equation} 
		from which we see that $\la_q\to\infty$ if we keep iterating. Let $Q$ be the smallest $Q$ such that $\la_Q$, computed from the formula \eqref{eq:la-q}, is bigger than $(\frac{1-\kappa}2+\kappa \eps-{H_\eta})/(1-{H_\eta})$, which is smaller than $1$ if $\eps$ is small. Replacing $\la_Q$ by a number between this threshold and $1$ (if $\la_Q$ from \eqref{eq:la-q} exceeds $1$), we obtain $\E [ \sup_{t\in[0,T]}(\sum_{j=1}^{[t/\del]-\ell k_n} \xi^{n,q}_j )^2 ]^{1/2}=o((k_n\del)^{1/2})$ for all $q=0,\dots, Q$, proving the lemma for $H_\eta\in(0,\frac12)$.
		
		If $H_\eta=\frac12$, things are much simpler. Indeed, in this case, $\int_0^r \Delta_{r-u} g_{H_\eta} (u-s)\btheta_s d\ov \bW_s = \int_u^r \btheta_s d\ov \bW_s$, so 
		$(k_n\del)^{-1/2}\sum_{j=1}^{[t/\del]-\ell k_n} \xi^n_j= O_\P(\del^{-1/4}\del^{-1} (k_n\del)^{-1} k_n \del^2 \del^{1/2})= O_\P(\del^{1/4})$.
	\end{proof}

	\section{Proof of Propositions~\ref{prop:approx1} and \ref{prop:approx3}}\label{sec:approx13}

	\begin{proof}[Proof of Proposition \ref{prop:approx3}]
		The   proposition follows from Lemmas~\ref{lem:drift}--\ref{lem:Q3}.
	\end{proof}

	\begin{lemma}\label{lem:drift}
		Recall  \eqref{eq:5terms} and that  $y_t=\int_0^t \si_s dW_s$. Under Assumption~\ref{ass:CLT'}, we have   $(k_n\del)^{-1/2}(Z^{n,\ell}_3(t)-\wt Z^{n,\ell}_3(t))\limL0$ and $(k_n\del)^{-1/2}(Z^{\prime n,\ell}_3(t)- M^{\prime n,\ell}_3(t))\limL0$ as $n\to\infty$, where
		\begin{equation}\label{eq:wtZn3} \begin{split}
				\wt Z^{n,\ell}_3(t)&=	\frac{2(k_n\delta_n)^{-1-2H} }{k_{n}}  \sum_{i=1}^{[ t/\delta_{n} ]-(\ell+2)k_n+1 }  D^n_{1,i+\ell k_n}\\
				&\quad\times
				\int_{(i-1)\delta_{n}}^{(i-1+2k_{n})\delta_{n}} \chi(\tfrac{[s/\del]-i+1}{2k_n-1})(y_{s}-y_{[s/\del]\del})dy_s.\end{split}
		\end{equation}
	\end{lemma}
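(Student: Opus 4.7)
The proof rests on three reductions. First, rewrite the difference $J^n_{1,i+k_n}-J^n_{1,i}$ using the integration-by-parts identity \eqref{eq:Jn1}: the two telescoping integrals collapse into
\[ J^n_{1,i+k_n}-J^n_{1,i}=\frac{2}{k_n\del}\int_{(i-1)\del}^{(i-1+2k_n)\del}w_i(s)(x_s-x_{[s/\del]\del})\,dx_s, \]
where $w_i(s)=-1$ on the first $k_n$ elementary intervals and $w_i(s)=+1$ on the next $k_n$. A direct inspection (splitting according to whether $[s/\del]-i+1$ is less than or at least $k_n$, as in the analysis of $Z^{n,\ell}_1$) shows that $w_i(s)=\chi(\tfrac{[s/\del]-i+1}{2k_n-1})$. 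Next, invoke \eqref{eq:int-c} to decompose $J^n_{2,i+(\ell+1)k_n}-J^n_{2,i+\ell k_n}=\frac{1}{k_n\del}(D^n_{1,i+\ell k_n}+D^n_{2,i+\ell k_n})$. Multiplying the two and inserting the decomposition $(x_s-x_{[s/\del]\del})\,dx_s=(y_s-y_{[s/\del]\del})\,dy_s+\text{(drift pieces)}$ yields the target $\wt Z^{n,\ell}_3(t)$ plus three ``$b$-drift'' errors and one final error $E_4$ carrying $D^n_{2,i+\ell k_n}$.

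The three $b$-drift errors are handled by brute force. Since $b$ is locally bounded by Assumption~\ref{ass:CLT'}, each cross term has $L^2$-size $\lec\del^{1/2}$ per index $i$ after the $\frac{2}{k_n\del}$ prefactor. Combining with the standard bound $\|D^n_{1,i+\ell k_n}\|_2\lec (k_n\del)^{1+H}$ from \eqref{eq:C1} and summing over the $O(\del^{-1})$ indices via Cauchy--Schwarz, one checks that the total $L^1$-contribution (uniformly on $[0,T]$) is $o((k_n\del)^{1/2})$ for every $\kappa\in[\tfrac{2H}{2H+1},\tfrac12]$.

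The hard part is $E_4$, in which the $\chi$-weighted double stochastic integral multiplies $D^n_{2,i+\ell k_n}$. A naive Cauchy--Schwarz using the moment bound \eqref{eq:C2} on $\|D^n_{2,i+\ell k_n}\|_2$ falls short of the rate $(k_n\del)^{1/2}$ when $\kappa$ is close to $\tfrac{2H}{2H+1}$ and $H$ is small, so one must exploit that the bracketed factor is $\calf_{(i-1+2k_n)\del}$-measurable with zero $\calf_{(i-1)\del}$-conditional expectation. Writing
\[ D^n_{2,i+\ell k_n}=\E[D^n_{2,i+\ell k_n}\mid\calf_{(i-1)\del}]+\bigl(D^n_{2,i+\ell k_n}-\E[D^n_{2,i+\ell k_n}\mid\calf_{(i-1)\del}]\bigr), \]
the first piece turns $E_4$ into a sum of block-martingale increments with respect to $(\calf_{(i-1+2k_n)\del})_i$, and a Doob-type size estimate as in \cite[Appendix~A]{CDL22} recovers the missing $(k_n/\del)^{-1/2}$ factor. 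The second piece is controlled by refining Lemma~\ref{lem:Ai} to bound the $\calf_{(i-1)\del}$-conditional deviation of $D^n_{2,i+\ell k_n}$ by the fractional regularity of $A$ restricted to those driving increments that are ``new'' past time $(i-1)\del$.

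Finally, the companion statement $(k_n\del)^{-1/2}(Z^{\prime n,\ell}_3(t)-M^{\prime n,\ell}_3(t))\limL 0$ follows by applying the same three reductions with the base window $[(i-1)\del,(i-1+2k_n)\del]$ and the lagged window $[(i+\ell k_n-1)\del,(i-1+(\ell+2)k_n)\del]$ exchanged: now the $\chi$-weighted stochastic integral sits at the lagged position and $D^n_{1,i}$ at the base position, which matches $M^{\prime n,\ell}_3(t)$ from \eqref{eq:wtZn3-prime}. No new ideas are required; the only obstacle is again the $E_4$-analogue, to which the conditional decomposition above applies verbatim.
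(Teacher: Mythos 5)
Your first reduction and the identification of the weight $\chi(\tfrac{[s/\del]-i+1}{2k_n-1})$ match the paper, but your second step contains a genuine gap. The dominant $b$-drift cross term — in the paper's notation $E^n_2(t)=\tfrac{2(k_n\del)^{-1-2H}}{k_n}\sum_i D^n_{1,i+\ell k_n}\int_{(i-1)\del}^{(i-1+2k_n)\del}\chi(\cdot)\,(x_s-x_{[s/\del]\del})\,b_s\,ds$ — is \emph{not} $o_\P((k_n\del)^{1/2})$ by the brute-force count you propose. Per index, Cauchy--Schwarz with $\lVert D^n_{1,i+\ell k_n}\rVert_2\lec (k_n\del)^{1+H}$ from \eqref{eq:C1} and the crude size $\del^{1/2}k_n\del$ of the $ds$-integral, summed over the $O(\del^{-1})$ indices, gives $O((k_n\del)^{-H}\del^{1/2})$; after dividing by $(k_n\del)^{1/2}$ this is $\del^{\kappa(H+\frac12)-H}$, which tends to $0$ only for $\kappa>\frac{2H}{2H+1}$ and is exactly of order one at the boundary $\kappa=\frac{2H}{2H+1}$ — the very value the lemma must cover for the rate-optimal estimator. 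The paper flags precisely this and spends most of the proof on it: replace $x$ by $y$ and $b_s$ by $b_{(i-1)\del}$ (using \eqref{eq:b-2}), apply the stochastic Fubini theorem so that the $ds$-integral becomes a $dW_r$-integral with integrand $\psi^n_r$ of size $\del^{3/2}$, integrate by parts against $D^n_{1,i+\ell k_n}$, control the two resulting conditionally centered sums by a block-martingale size estimate, and bound the bracket term directly via \eqref{eq:intG}. None of this cancellation appears in your argument, so your claim of ``$o((k_n\del)^{1/2})$ for every $\kappa\in[\frac{2H}{2H+1},\frac12]$'' does not follow from the bounds you state.

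On your $E_4$ (the paper's $E^n_3$): for $H<\frac12$ the paper needs no conditioning at all; it interchanges the sum over $i$ with the $ds$-integral defining $D^n_{2,i+\ell k_n}$ (so each time point is hit by only $O(k_n)$ indices) and, crucially, uses the refined regularity of $A$ from Lemma~\ref{lem:Ai}, namely $\lVert A_{t+h}-A_t\rVert_2\lec (1+t^{-H})h^{(2H+1/2)\wedge 1}$, which is much better than the naive $h^H$. With only $h^H$, even a full martingale gain of $(k_n\del)^{1/2}$ still leaves an order-one ratio at the boundary $\kappa$, so in your scheme the unspecified ``refinement of Lemma~\ref{lem:Ai}'' carries all the weight — it is the missing quantitative input, not an afterthought. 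Two further wrinkles you gloss over: the piece $\E[D^n_{2,i+\ell k_n}\mid\calf_{(i-1)\del}]\,Y^n_i$ is not exactly conditionally centered, since $Y^n_i$ is built from $x$ and has a nonzero $\calf_{(i-1)\del}$-conditional mean through the drift $b$; and the case $H=\frac12$ (where $A$ is absolutely continuous and the paper freezes $\si$ and $a$ and uses a martingale estimate) is not addressed. These are repairable, but the $E^n_2$-type term above is a real missing idea.
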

	\begin{proof} We only consider the approximation of $Z^{n,\ell}_3(t)$; the arguments for $Z^{\prime n,\ell}_3(t)$ are analogous.
		Using the equality $xy-x_0y_0=(x-x_0)y_0+x(y-y_0)$,	we can decompose the difference $Z^{n,\ell}_3(t)-\wt Z^{n,\ell}_3(t)=E^n_1(t)+E^n_2(t)+E^n_3(t)$, where
		\begin{align*}
			E^n_1(t)&=\begin{multlined}[t][0.85\textwidth] \frac{2	(k_n\delta_n)^{-1-2H} }{k_{n}}  \sum_{i=1}^{[ t/\delta_{n} ]-(\ell+2)k_n+1 }	D^n_{1,i+\ell k_n}\\
				\times 
				\int_{(i-1)\delta_{n}}^{(i-1+2k_{n})\delta_{n}} \chi(\tfrac{[s/\del]-i+1}{2k_n-1})\int_{[s/\del]\del}^s b_r drdy_s
				, \end{multlined} \\
			E^n_2(t)&= \begin{multlined}[t][0.85\textwidth]	\frac{2(k_n\delta_n)^{-1-2H} }{k_{n}}  \sum_{i=1}^{[ t/\delta_{n} ]-(\ell+2)k_n+1 }D^n_{1,i+\ell k_n}\\	\times  
				\int_{(i-1)\delta_{n}}^{(i-1+2k_{n})\delta_{n}} \chi(\tfrac{[s/\del]-i+1}{2k_n-1})(x_{s}-x_{[s/\del]\del})b_sds
				, \end{multlined} \\
			E^n_3(t)&= 	\begin{multlined}[t][0.85\textwidth] \frac{2(k_n\delta_n)^{-1-2H}}{k_{n}}  \sum_{i=1}^{[ t/\delta_{n} ]-(\ell+2)k_n+1 }D^n_{2,i+\ell k_n}	\\ \times  
				\int_{(i-1)\delta_{n}}^{(i-1+2k_{n})\delta_{n}} \chi(\tfrac{[s/\del]-i+1}{2k_n-1})
				(x_{s}-x_{[s/\del]\del}) dx_s
				.\end{multlined}
		\end{align*}
		The first term is the easiest to deal with. The $dy_s$-integral  is of order $\del(k_n\del)^{1/2}$, while $D^n_{1,i+\ell k_n}$ is of order $(k_n\del)^{1+H}$ by \eqref{eq:C1}. Hence, $$E^n_1(t)=O_\P(k_n\del)^{-2-2H}\del(k_n\del)^{1/2}(k_n\del)^{1+H}=o((k_n\del)^{1/2})$$ for any $\kappa\geq \frac{2H}{2H+1}$. 
		
		Next, consider $E^n_3(t)$ and denote the  $dx_{s}$-integral by $Y^n_i$.
		 Clearly, we have $\E[(Y^n_i)^2]^{1/2}\lec \del^{1/2}(k_n\del)^{1/2}$, uniformly in $n$ and $i$. Interchanging summation over $i$ with the integral defining $D^n_{2,i+\ell k_n}$ in \eqref{eq:5terms}, we have that 
		\[ E^n_3(t)= \frac{2(k_n\delta_n)^{-1-2H}}{k_{n}}  \int_{\ell k_n\del}^{([t/\del]-k_n)\del} \sum_{i=[s/\del]-(\ell+1)k_n+2}^{[s/\del]-\ell k_n+1} Y^n_i (A_{s+k_n\del}-A_s) ds. \]
		The sum ranges over $O(k_n)$ many terms only. Thus, by Lemma~\ref{lem:Ai},
		\[ \E\biggl[\sup_{t\in[0,T]} \lvert E^n_3(t)\rvert\biggr]\lec (k_n\delta_n)^{-1-2H}\del^{1/2} (k_n\del)^{1/2}(k_n\del)^{(2H+1/2)\wedge 1}\int_0^T (1+s^{-H}) ds.\]
		Distinguishing the two cases $H\leq \frac14$ and $H\in(\frac14,\frac12)$, one can verify that the last line is $o((k_n\del)^{1/2})$ for all $\kappa\in[\frac{2H}{2H+1},\frac12]$ and $H\in(0,\frac12)$. We postpone the analysis of $E^n_3(t)$ if $H=\frac12$ to the end of this proof.
		
		The term $E^n_2(t)$ is more complicated. Let us first try a power-counting argument as before: the  $ds$-integral is of order $\del^{1/2}k_n\del$, while $D^n_{1,i+\ell k_n}=O_\P((k_n\del)^{1+H})$, so $E^n_2(t)=O_\P((k_n\del)^{-2-2H}\del^{1/2}k_n\del(k_n\del)^{1+H})$, which as the reader can check, is $o_\P((k_n\del)^{1/2}$ if $\kappa>\frac{2H}{2H+1}$ but unfortunately only $O_\P((k_n\del)^{1/2})$ if $\kappa=\frac{2H}{2H+1}$. While this simple approach fails for the boundary case $\kappa=\frac{2H}{2H+1}$, it shows one important point: when trying to improve the bound, we are allowed to make any modifications that generate an asymptotically vanishing error (the speed can be arbitrarily slow). For instance, we may replace  $x$ by $y$ and, thanks to \eqref{eq:b-2}, $b_s$ by $b_{(i-1)\del}$ in the definition of $E^n_2(t)$, so that we only have to analyze
		\begin{equation}\label{eq:E2prime} 
			\begin{split}
				& \frac{2(k_n\delta_n)^{-1-2H}}{k_{n}}  \sum_{i=1}^{[ t/\delta_{n} ]-(\ell+2)k_n+1 } b_{(i-1)\del}	  
				\int_{(i-1)\delta_{n}}^{(i-1+2k_{n})\delta_{n}} \chi(\tfrac{[s/\del]-i+1}{2k_n-1})(y_{s}-y_{[s/\del]\del})ds
				\\
				&\quad \times \int_0^{(i-1+(\ell+2)k_n)\delta_n} \Delta^{2}_{k_n\del}G_H((i+\ell k_n-1)\del-r) (\eta_rdW_r+\wh\eta_r d\wh W_r).
			\end{split} \!\!\!
		\end{equation}
		Since $y_{s}-y_{[s/\del]\del}=\int_{[s/\del]\del}^s \si_r dW_r$, we can use the stochastic Fubini theorem to rewrite the $ds$-integral above as 
		$$ \int_{(i-1)\del}^{(i-1+2k_n)\del} \int_r^{(r+\del)\wedge (i-1+2k_n)\del} \chi(\tfrac{[s/\del]-i+1}{2k_n-1})(y_{s}-y_{[s/\del]\del})ds \si_r dW_r. $$
		We do not really need the explicit form of the new $ds$-integral, so let us denote it by  $\psi^n_r$ and only remark that $\E[\sup_{r\in[0,T]} \lvert\psi^n_r\rvert^p]^{1/p}\lec \del^{3/2}$ for all $p>0$. Using integration by parts, we can now write \eqref{eq:E2prime} as $E^n_{21}(t)+E^n_{22}(t)+E^n_{23}(t)$, where
		\begin{align*}
			E^n_{21}(t)	&= \begin{multlined}[t][0.85\textwidth](k_n\delta_n)^{-1-2H} \frac{2}{k_{n}}  \sum_{i=1}^{[ t/\delta_{n} ]-(\ell+2)k_n+1 } b_{(i-1)\del}	  \int_{(i-1)\del}^{(i-1+2k_n)\del}  \psi^n_r \si_r\\
				\times\int_0^{r} \Delta^{2}_{k_n\del}G_H((i+\ell k_n-1)\del-u) (\eta_udW_u+\wh\eta_u d\wh W_u) dW_r,\end{multlined} \\
			E^n_{22}(t)	&=\begin{multlined}[t][0.85\textwidth] (k_n\delta_n)^{-1-2H} \frac{2}{k_{n}}  \sum_{i=1}^{[ t/\delta_{n} ]-(\ell+2)k_n+1 } b_{(i-1)\del}\\
				\times	 \int_{(i-1)\del}^{(i-1+(\ell+2)k_n)\delta_n} \Delta^{2}_{k_n\del}G_H((i+\ell k_n-1)\del-r)\\
				\times \int_{(i-1)\del}^{r} \psi^n_u\si_udW_u(\eta_rdW_r+\wh\eta_r d\wh W_r),\end{multlined} \\
			E^n_{23}(t)	&= \begin{multlined}[t][0.85\textwidth](k_n\delta_n)^{-1-2H} \frac{2}{k_{n}}  \sum_{i=1}^{[ t/\delta_{n} ]-(\ell+2)k_n+1 } b_{(i-1)\del}	\\
				\times  \int_{(i-1)\del}^{(i-1+2k_n)\del} \Delta^{2}_{k_n\del}G_H((i+\ell k_n-1)\del-r) \psi^n_r \si_r\eta_rdr.\end{multlined}
		\end{align*}
		
		Since
		\begin{equation}\label{eq:intG} 
			\int_{(i-1)\del}^{(i-1+2k_n)\del} \lvert \Delta^{2}_{k_n\del}G_H((i+\ell k_n-1)\del-r) \rvert dr = (k_n\del)^{H+3/2} \int_{\ell-2}^\ell \lvert \Delta^2_1 G_H(r)\rvert dr, 
		\end{equation}
		we have that $E^n_{23}(t)=O_\P((k_n\del)^{-2-2H}(k_n\del)^{H+3/2}\del^{3/2})=o_\P((k_n\del)^{1/2})$ for all $\kappa\geq\frac{2H}{2H+1}$.
		For both $E^n_{21}(t)$ and $E^n_{22}(t)$, note that the $i$th term is $\calf_{(i-1+(\ell+2)k_n)\del}$-measurable with zero $\calf_{(i-1)\del}$-conditional expectation. Moreover, using \eqref{eq:C1}, we have that 
		each summand is of order $(k_n\del)^{1+H}(k_n\del)^{1/2}\del^{3/2}$. We can therefore apply a martingale size estimate (see \cite[Appendix~A]{CDL22}) to both terms and obtain
		\begin{equation}\label{eq:help2}
			\E\biggl[\sup_{t\in[0,T]} \lvert E^n_{21}(t)+E^n_{22}(t)\rvert\biggr]	\lec (k_n\del)^{-1-2H}k_n^{-1}(k_n/\del)^{1/2} (k_n\del)^{1+H}(k_n\del)^{1/2}\del^{3/2},\!\!
		\end{equation}
		which is $o((k_n\del)^{1/2})$.
		
		Lastly, let us come back to $E^n_3(t)$ if $H=\frac12$. As in the case of $E^n_2(t)$, bounding term by term leads to an $O_\P((k_n\del)^{1/2})$ estimate, which is just not enough at the considered rate. But we are allowed to modify $E^n_3(t)$ in the following way at no cost: we replace $\si$ (which appears in $y$) by $\si_{(i-1)\del}$ and $a_r$ (which appears in $A_{s+k_n\del}-A_s$, which in turn appears in $D^n_{2,i+\ell k_n}$) by $a_{(i-1)\del}$. Once these changes are made, the $i$th term in $E^n_3(t)$ will be $\calf_{(i-1+2k_n)\del}$-measurable with zero $\calf_{(i-1)\del}$-conditional expectation, so we can conclude by   a martingale size estimate.
	\end{proof}
	
	Next, using integration by parts, we have that 
	\begin{equation*}
		\wt Z^{n,\ell}_3(t)=M^{n,\ell}_{31}(t)+M^{n,\ell}_{32}(t) + Q^{n,\ell}_3(t),
	\end{equation*}
	where
	\begin{align*}
		Q^{n,\ell}_3(t)&=(k_n\delta_n)^{-1-2H} \frac{2}{k_{n}}  \sum_{i=1}^{[ t/\delta_{n} ]-(\ell+2)k_n+1 }  \int_{(i-1)\del}^{(i-1+2k_{n})\delta_{n}}  \Delta^{2}_{k_n\del} G_H((i-1+\ell k_n)\del-u)\\
		&\quad\times\chi(\tfrac{[u/\del]-i+1}{2k_n-1})(y_{u}-y_{[u/\del]\del}) \si_u\eta_u du.
	\end{align*}
	
	\begin{lemma}\label{lem:M3} Under Assumption~\ref{ass:CLT'}, if $\kappa>\frac{2H}{2H+1}$, then $M^{n,\ell}_{31}+M^{n,\ell}_{32} +M^{\prime n,\ell}_3 \limL0$.
	\end{lemma}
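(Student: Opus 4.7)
The plan is to show that each of $M^{n,\ell}_{31}$, $M^{n,\ell}_{32}$, and $M^{\prime n,\ell}_3$, normalized by $(k_n\del)^{1/2}$, tends to zero uniformly in $t\in[0,T]$ in $L^2$ whenever $\kappa>\frac{2H}{2H+1}$. All three terms have the common form $\frac{2(k_n\del)^{-1-2H}}{k_n}\sum_i X^n_i$, where $X^n_i$ is $\calf_{(i-1+(\ell+2)k_n)\del}$-measurable with zero $\calf_{(i-1)\del}$-conditional expectation. For $M^{\prime n,\ell}_3$, this follows immediately from the temporal separation between the two factors (since $\ell\geq 2$ implies $(i-1+2k_n)\del\leq(i-1+\ell k_n)\del$). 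For $M^{n,\ell}_{31}$ and $M^{n,\ell}_{32}$, one splits the inner kernel integral $\int_0^s\Delta^2_{k_n\del}G_H(\cdot)\bta_r\,d\bW_r$ at $(i-1)\del$ into its $\calf_{(i-1)\del}$-measurable part and a proper Itô integral from $(i-1)\del$ onwards; the outer $dy_s$-integral then annihilates both pieces in $\calf_{(i-1)\del}$-conditional expectation. Decomposing $\sum_i$ by residue class modulo $(\ell+2)k_n$ yields $(\ell+2)k_n$ parallel martingale arrays of length $O(\del^{-1}k_n^{-1})$, to which Doob's inequality and BDG apply.

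For $M^{\prime n,\ell}_3$, the two factors $Y^n_i$ and $D^n_{1,i}$ have disjoint supports and a Cauchy--Schwarz bound together with BDG and Assumption~\ref{ass:CLT'} yields $\|X^n_i\|_{L^2}\lec k_n^{3/2+H}\del^{2+H}$. Applying Doob and BDG to each block and summing by the triangle inequality across the $O(k_n)$ blocks gives $\|\sup_{t\leq T}|\sum_i X^n_i|\|_{L^2}\lec\sqrt{k_n/\del}\cdot k_n^{3/2+H}\del^{2+H}=k_n^{2+H}\del^{3/2+H}$. Multiplying by the prefactor $2(k_n\del)^{-1-2H}/k_n$ and dividing by the target rate $(k_n\del)^{1/2}$ produces an overall order $\del^{(1/2+H)\kappa-H}$, which vanishes iff $\kappa>\frac{2H}{2H+1}$.

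For $M^{n,\ell}_{31}$ and $M^{n,\ell}_{32}$, the inner and outer integrals overlap in time and a per-summand Cauchy--Schwarz bound is too loose. The strategy is to rewrite the full sum as a single Itô integral, e.g.\ for $M^{n,\ell}_{31}$,
\[ \sum_i X^n_i=\int_0^T(y_s-y_{[s/\del]\del})\si_s\Psi(s)\,dW_s,\qquad \Psi(s)=\int_0^s\Psi_s(r)\bta_r\,d\bW_r, \]
where $\Psi_s(r)=\sum_i\chi(\tfrac{[s/\del]-i+1}{2k_n-1})\Delta^2_{k_n\del}G_H((i-1+\ell k_n)\del-r)$. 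Re-indexing the two $\pm$-halves of the $\chi$-sum collapses it to $\Psi_s(r)=\sum_j\Delta^3_{k_n\del}G_H((j-1+(\ell-1)k_n)\del-r)$ over only $k_n$ values of $j$; combined with $|\Delta^3_{k_n\del}G_H(u)|\lec (k_n\del)^{H+1/2}$ on the relevant support of length $O(k_n\del)$, two successive applications of BDG give $\|\sum_i X^n_i\|_{L^2}\lec k_n^{2+H}\del^{H+3/2}$, matching the order obtained for $M^{\prime n,\ell}_3$ and hence the same final rate. The main obstacle is precisely this re-indexing step that upgrades the $\chi$-weighted sum of second-order differences to third-order ones; without it, the naive blocking bound only yields the suboptimal condition $\kappa>(1+2H)/(2+2H)$. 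Once this upgrade is in place, the proof closes by standard martingale size estimates as in \cite[Appendix~A]{CDL22}, already invoked in the proof of Lemma~\ref{lem:Cs}.
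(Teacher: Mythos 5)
Your first paragraph together with your treatment of $M^{\prime n,\ell}_3$ already contains, in essence, the paper's entire proof: each summand is $\calf_{(i-1+(\ell+2)k_n)\del}$-measurable with zero $\calf_{(i-1)\del}$-conditional expectation, the per-summand bound $\|X^n_i\|_{L^2}\lec (k_n\del)^{1+H}(k_n\del)^{1/2}\del^{1/2}=k_n^{3/2+H}\del^{2+H}$ holds, and the martingale size estimate with window $O(k_n)$ gives $(k_n\del)^{-1-2H}k_n^{-1}(k_n/\del)^{1/2}k_n^{3/2+H}\del^{2+H}=(k_n\del)^{-H}\del^{1/2}$, which is $o((k_n\del)^{1/2})$ exactly when $\kappa>\frac{2H}{2H+1}$. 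The genuine flaw in your proposal is the premise that this per-summand bound is ``too loose'' for $M^{n,\ell}_{31}$ and $M^{n,\ell}_{32}$. It is not: the temporal overlap is harmless because the inner kernel integral is bounded in $L^p$ \emph{uniformly in $s$} by $(k_n\del)^{1+H}$ (this is exactly \eqref{eq:C1}), and for $M^{n,\ell}_{32}$ the inner $dy_s$-integral is bounded in $L^p$ uniformly in $r$ by $(k_n\del)^{1/2}\del^{1/2}$; inserting these uniform bounds inside BDG for the outer integral yields the same $k_n^{3/2+H}\del^{2+H}$ per summand for all three terms, and the same martingale size argument as in \eqref{eq:help2} closes the proof in one stroke — which is precisely how the paper argues. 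Consequently your claim that the ``naive'' route only gives $\kappa>(1+2H)/(2+2H)$ is incorrect, and the entire detour for $M^{n,\ell}_{31}$, $M^{n,\ell}_{32}$ is unnecessary.

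The substitute argument also has gaps of its own. The re-indexing that pairs the two $\chi$-halves and upgrades $\Delta^2_{k_n\del}G_H$ to $\Delta^3_{k_n\del}G_H$ is legitimate for $M^{n,\ell}_{31}$ (it is the same telescoping the paper uses for the bias term in Lemma~\ref{lem:Q3}), but it does not transfer to $M^{n,\ell}_{32}$: there the factor accompanying the kernel is the $i$-dependent integral $\int_{(i-1)\del}^{((i-1+2k_n)\del)\wedge r}\chi(\cdot)(y_s-y_{[s/\del]\del})dy_s$, so pairing $i$ with $i+k_n$ does not collapse the sum into pure third differences of $G_H$, and your formula for $\Psi_s(r)$ has no analogue for that term. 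Moreover, the bound on $\|\sum_i X^n_i\|_{L^2}$ obtained from the single It\^o-integral representation is a fixed-$t$ bound, whereas $\limL$ requires control of $\sup_{t\in[0,T]}$; since the upper limit of the sum over $i$ (hence the integrand) depends on $t$, Doob does not apply directly to that representation, and you would have to fall back on the blocking/martingale estimate with per-summand bounds — i.e.\ on the route you discarded. The fix is simply to apply your $M^{\prime n,\ell}_3$ argument verbatim to $M^{n,\ell}_{31}$ and $M^{n,\ell}_{32}$.
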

	\begin{proof}
		By \eqref{eq:C1}, the $i$th term in the summation in  $M^{n,\ell}_{31}(t)$, $M^{n,\ell}_{32}(t)$ and $M^{\prime n,\ell}_3(t)$ is of order $(k_n\del)^{1+H}(k_n\del)^{1/2}\del^{1/2}$. Therefore, 
		by a martingale size argument, very similarly to how we obtained \eqref{eq:help2}, 
		it follows that
		\begin{align*}
			&\E\biggl[\sup_{t\in[0,T]} \lvert M^{n,\ell}_{31}(t)+M^{n,\ell}_{32}(t)+M^{\prime n,\ell}_3(t)\rvert\biggr] \\
			&\qquad\lec (k_n\del)^{-1-2H}k_n^{-1}(k_n/\del)^{1/2} (k_n\del)^{1+H}(k_n\del)^{1/2}\del^{1/2},
		\end{align*}
		which is $o((k_n\del)^{1/2})$ if (and only if) $\kappa>\frac{2H}{2H+1}$.
	\end{proof}

	\begin{lemma}\label{lem:Q3}
		Under Assumption~\ref{ass:CLT'},
		we have    $(k_n\del)^{-1/2}(Q^{n,\ell}_3(t)-\cala^{n,\ell,k_n}_t)\limL0$ for any $\ell\geq2$, where $\cala^{n,\ell,k_n}_t$ is defined in \eqref{eq:bias}.
		In addition, we have that $\cala^{n,\ell,k_n}_t=O_\P(k_n^{-1/2-H})$. In particular, 
		if $\kappa>\frac{1}{2+2H}$, then $(k_n\del)^{-1/2}\cala^{n,\ell,k_n}\limL0$. The last condition is satisfied with $\kappa=\frac{2H}{2H+1}$ if and only if $H> \frac14(\sqrt{5}-1)\approx 0.3090$.
	\end{lemma}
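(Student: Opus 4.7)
My plan is to split the proof into (i) an algebraic identity reducing $Q^{n,\ell}_3(t)$ to the form of $\cala^{n,\ell,k_n}_t$ plus a negligible martingale remainder, and (ii) a direct size estimate of $\cala^{n,\ell,k_n}_t$ via an Itô-isometry argument; the final conditional statements are then purely a matter of scaling.

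For (i), I would interchange the sum over $i$ with the integral over $u$ in $Q^{n,\ell}_3(t)$. For a fixed $u$ with $[u/\del]=j$, the admissible $i$'s form the window $\{j-2k_n+2,\dots,j+1\}$, and reparametrizing via $i'=j+1-i\in\{0,\dots,2k_n-1\}$ turns $\chi(\cdot)$ into $-1$ on $\{0,\dots,k_n-1\}$ and $+1$ on $\{k_n,\dots,2k_n-1\}$. Factoring $(k_n\del)^{H+1/2}$ out of $\Delta^2_{k_n\del}G_H$ and shifting the summation index by $k_n$ in the second half, the elementary telescoping identity $\Delta^2_1 f(x+1)-\Delta^2_1 f(x)=\Delta^3_1 f(x)$ collapses the signed sum to $-\sum_{i'=0}^{k_n-1}\Delta^3_1(\ell-1-\tfrac{i'+\{u/\del\}}{k_n})_+^{H+1/2}$, precisely the kernel appearing in \eqref{eq:bias}. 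Substituted back, this produces an expression identical to $\cala^{n,\ell,k_n}_t$ except that the inner factor is $\si_u\eta_u$ in place of $\si_u\eta_u-\si_{[u/\del]\del}\eta_{[u/\del]\del}$. The leftover (from substituting $\si_{(j-1)\del}\eta_{(j-1)\del}$) factors on each cell as $\si_{(j-1)\del}\eta_{(j-1)\del}\cdot\xi^n_j$; stochastic Fubini applied to $y_u-y_{(j-1)\del}=\int_{(j-1)\del}^u\si_v dW_v$ rewrites $\xi^n_j$ as a genuine $dW$-integral, so $\xi^n_j$ is $\calf_{j\del}$-measurable with zero $\calf_{(j-1)\del}$-conditional expectation and $L^2$-size $\lec\del^{3/2}$. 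The martingale size estimate of \cite[Appendix~A]{CDL22} then bounds the leftover by $(k_n\del)^{-1/2-H}\cdot\del^{-1/2}\cdot\del^{3/2}=(k_n\del)^{-1/2-H}\del$, which is $o((k_n\del)^{1/2})$ under $\kappa\ge 2H/(2H+1)$ (equivalent to $\kappa(1+H)>H$, which collapses to $H>0$).

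For (ii), I would decompose the integral in \eqref{eq:bias} cell by cell. Writing $I^n_j=\int_{(j-1)\del}^{j\del}F(\{u/\del\})Y_uZ_u\,du$ with $Y_u=y_u-y_{(j-1)\del}$, $Z_u=\si_u\eta_u-\si_{(j-1)\del}\eta_{(j-1)\del}$ and $F$ the bounded Riemann-sum kernel extracted above, I split $I^n_j=\E[I^n_j\mid\calf_{(j-1)\del}]+(I^n_j-\E[I^n_j\mid\calf_{(j-1)\del}])$. Since $Y_u$ has zero $\calf_{(j-1)\del}$-conditional mean, only the $W$-driven component of $Z_u$ enters the conditional expectation: linearizing the nonlinear maps $\si=\sqrt{c}$ and $\eta=\sqrt{\eta^2}$, the $W$-parts of $c_u-c_{(j-1)\del}$ (through $g_H$) and of $\eta^2_u-\eta^2_{(j-1)\del}$ (through the $W$-projections of $\btheta\,d\bar\bW$) are identified, and Itô isometry furnishes $|\E[Y_uZ_u\mid\calf_{(j-1)\del}]|\lec\int_{(j-1)\del}^u g_H(u-r)dr\lec\del^{H+1/2}$ plus the analogous $g_{H_\eta},g_{H_{\hat\eta}}$ contributions. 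Hence $|\E[I^n_j\mid\calf_{(j-1)\del}]|\lec\del^{3/2+H}$, and summing over $j\sim\del^{-1}$ gives $O_\P(\del^{1/2+H})$. The martingale-difference component, with cell variance $\lec\del^{3+2H}$, produces after BDG only $O(\del^{1+H})$ and is thus negligible. Multiplying the combined $O_\P(\del^{1/2+H})$ bound by the prefactor $(k_n\del)^{-1/2-H}$ gives precisely $O_\P(k_n^{-1/2-H})$.

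Finally, $(k_n\del)^{-1/2}k_n^{-1/2-H}=\theta^{-1-H}\del^{\kappa(1+H)-1/2}\to 0$ iff $\kappa>1/(2+2H)$; and the inequality $2H/(2H+1)>1/(2+2H)$, after clearing denominators, reduces to $4H^2+2H-1>0$, whose positive root is $(\sqrt{5}-1)/4$. The technical crux is the conditional-covariance bound $|\E[Y_uZ_u\mid\calf_{(j-1)\del}]|\lec\del^{H+1/2}$: accessing the $W$-content of the nonlinear composites $\sqrt{c}$ and $\sqrt{\eta^2}$, and handling the integrable singularity of $g_H$ and $g_{H_\eta}$ at zero, is the most delicate step.
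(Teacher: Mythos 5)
Your proposal follows essentially the same route as the paper's proof. Part (i) is the paper's argument with the two steps swapped: the paper first splits $\si_u\eta_u$ into the increment $\si_u\eta_u-\si_{[u/\del]\del}\eta_{[u/\del]\del}$ plus its frozen value (yielding $Q^{n,\ell}_{31}+Q^{n,\ell}_{32}$) and only afterwards collapses the $\chi$-signed sum of second differences into $-\Delta^3_1 G_H(\ell-1-\tfrac{i+\{u/\del\}}{k_n})$, whereas you collapse first and split afterwards; similarly, you run the martingale size estimate over the time cells $j$ while the paper runs it, for each fixed offset $j\in\{0,\dots,2k_n-1\}$, over the index $i$. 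Both variants are valid and give the identical bound $k_n^{-1/2-H}\del^{1/2-H}=o((k_n\del)^{1/2})$ for $\kappa\geq\frac{2H}{2H+1}$, so this is a cosmetic rather than substantive difference.

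Two caveats. First, your assertion that for each $u$ the admissible $i$'s form the full window $\{[u/\del]-2k_n+2,\dots,[u/\del]+1\}$ ignores the truncations near $u\approx 0$ and near the right endpoint, as well as the fact that the $u$-integration runs to $([t/\del]-\ell k_n)\del$ rather than $t$; the paper handles exactly these edge effects in the passage $Q^{n,\ell}_{31}\to\wt Q^{n,\ell}_{31}\to\wh Q^{n,\ell}_{31}$, using \eqref{eq:Delta-G}, with total error $\lec (k_n\del)^{1/2-H}\del^{1/2+H}=o((k_n\del)^{1/2})$. This is routine but should not be skipped. Second, your part (ii) is far heavier than needed: the paper bounds $\cala^{n,\ell,k_n}_t$ term by term --- $\Delta^3_1 G_H$ is bounded, $y_u-y_{[u/\del]\del}=O(\del^{1/2})$ and $\si_u\eta_u-\si_{[u/\del]\del}\eta_{[u/\del]\del}=O(\del^{H})$ in $L^2$, so Cauchy--Schwarz gives $O_\P(\del^{1/2+H})$ for the $du$-integral and the prefactor $(k_n\del)^{-1/2-H}$ yields $O_\P(k_n^{-1/2-H})$ directly. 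Your conditional-covariance bound $\lvert\E[Y_uZ_u\mid\calf_{(j-1)\del}]\rvert\lec\del^{1/2+H}$ is of exactly the same order as this trivial bound, so the It\^o-isometry and $\sqrt{c}$-linearization machinery (which is itself delicate, since $c$ and $\eta^2$ are not assumed bounded away from zero) buys nothing; what you identify as the technical crux can simply be dispensed with. Your final scaling computations ($\kappa>\frac{1}{2+2H}$ and the threshold $\frac14(\sqrt5-1)$) are correct and agree with the paper.
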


	\begin{proof} 
		In a first step, we decompose $Q^{n,\ell}_3(t)=Q^{n, \ell}_{31}(t)+Q^{n, \ell}_{32}(t)$, where
		\begin{equation}\begin{split}
				Q^{n,\ell}_{31}(t)  &=\begin{multlined}[t][0.8\textwidth] 2(k_{n}\delta_{n})^{-1-2H} \frac{1}{k_{n}}
					\sum_{i=1}^{[ t/\delta_{n} ] - (\ell+2)k_{n}+1} 
					\sum_{j=0}^{ 2k_{n}-1}	\chi(\tfrac{j}{2k_n-1})\\
					\times\int_{ (i-1+j)\delta_{n}}^{(i+j)\delta_{n}}  
					\Delta^2_{k_n\del} G_H((i-1+\ell k_n)\delta_{n}-u) \\
					\times 
					(y_{u}-y_{(i+j-1)\delta_{n}})(\si_{u}	\eta_{u}-\si_{(i+j-1)\delta_{n}}\eta_{(i+j-1)\delta_{n}})  du,\end{multlined}\\
				Q^{n,\ell}_{32}(t)  &= \begin{multlined}[t][0.8\textwidth]2(k_{n}\delta_{n})^{-1-2H} \frac{1}{k_{n}}
					\sum_{i=1}^{[ t/\delta_{n} ] - (\ell+2)k_{n}+1} 
					\sum_{j=0}^{ 2k_{n}-1}	\chi(\tfrac{j}{2k_n-1})\\
					\times\int_{ (i-1+j)\delta_{n}}^{(i+j)\delta_{n}}  
					\Delta^2_{k_n\del} G_H((i-1+\ell k_n)\delta_{n}-u) \\
					\times 
					(y_{u}-y_{(i+j-1)\delta_{n}})\si_{(i-1+j)\delta_{n}}\eta_{(i-1+j)\delta_{n}}  du.\end{multlined}
			\end{split}
		\end{equation}
		Let us consider $Q^{n,\ell}_{32}(t)$ first and interchange the sums over $i$ and $j$. For every fixed $j$, we observe that the $i$th term is $\calf_{(i+j)\del}$-measurable with vanishing $\calf_{(i+j-1)\del}$-conditional expectation. Moreover, similarly to \eqref{eq:intG}, 
		\begin{align*}
			\int_{(i-1+j)\del}^{(i+j)\del} \lvert \Delta^{2}_{k_n\del}G_H((i+\ell k_n-1)\del-u) \rvert du& = (k_n\del)^{H+3/2} \int_{\ell-{\frac{j+1}{k_n}}}^{\ell-\frac j {k_n}} \lvert \Delta^2_1 G_H(u)\rvert du \\
			&\lec \frac{(k_n\del)^{H+3/2}}{k_n}. 
		\end{align*}  
		Therefore, for every $j$, the sum over $i$ is a martingale sum, which yields
		\[ \E\biggl[\sup_{t\in[0,T]}\lvert Q^{n,\ell}_{32}(t) \rvert\biggr]\lec (k_n\del)^{-1-2H}\frac{(k_n\del)^{H+3/2}}{k_n}\del^{-1/2}  \del^{1/2} = o((k_n\del)^{1/2}) \]
		for all $\kappa\geq\frac{2H}{2H+1}$.
		
		Consequently, we only have to consider $Q^{n,\ell}_{31}(t)$ further, which can be rewritten as
		\begin{align*}
			Q^{n,\ell}_{31}(t)	&= 2(k_{n}\delta_{n})^{-1/2-H} \frac{1}{k_{n}}
			\sum_{i=1}^{[ t/\delta_{n} ] - (\ell+2)k_{n}+1} 
			\int_{ (i-1)\delta_{n}}^{(i+2k_n-1)\delta_{n}}  \chi(\tfrac{[u/\del]-i+1}{2k_n-1})
			\\
			&\quad\times 	\Delta^2_{1} G_H(\tfrac{i-1-u/\del}{k_n}+\ell)
			(y_{u}-y_{[u/\del]\del})(\si_{u}	\eta_{u} -\si_{[u/\del]\del}\eta_{[u/\del]\del}) du\\
			&=2(k_{n}\delta_{n})^{-1/2-H} \frac{1}{k_{n}}
			\int_{0}^{([t/\del]-\ell k_n)\del} \sum_{i=([u/\del]-2k_n+2)\vee 1}^{([ u/\delta_{n} ]+1) \wedge ([t/\del]- (\ell+2)k_{n}+1)}  \chi(\tfrac{[u/\del]-i+1}{2k_n-1})
			\\
			&\quad\times\Delta^2_{1} G_H(\tfrac{i-1-u/\del}{k_n}+\ell)
			(y_{u}-y_{[u/\del]\del})(\si_{u}	\eta_{u} -\si_{[u/\del]\del}\eta_{[u/\del]\del}) du\\
			&=2(k_{n}\delta_{n})^{-1/2-H} \frac{1}{k_{n}}
			\int_{0}^{([t/\del]-\ell k_n)\del} \sum_{i=(1-2k_n)\vee (-[u/\del])}^{0 \wedge ([t/\del]-[u/\del]- (\ell+2)k_{n})}  \chi(\tfrac{-i}{2k_n-1})
			\\
			&\quad\times  \Delta^2_{1} G_H(\tfrac{i-\{u/\del\}}{k_n}+\ell)
			(y_{u}-y_{[u/\del]\del})(\si_{u}	\eta_{u} -\si_{[u/\del]\del}\eta_{[u/\del]\del}) du.
		\end{align*}
		Similarly to how we proved Lemma~\ref{lem:QV}, one can use \eqref{eq:Delta-G} to show that $(k_n\del)^{-1/2} (Q^{n,\ell}_{31}(t)-\wt Q^{n,\ell}_{31}(t)) \limL 0$, where
		\begin{align*}
			\wt Q^{n,\ell}_{31}(t)	&=2(k_{n}\delta_{n})^{-1/2-H} \frac{1}{k_{n}}
			\int_{0}^{([t/\del]-\ell k_n)\del} \sum_{i=0}^{2k_n-1}  \chi(\tfrac{i}{2k_n-1})\Delta^2_{1} G_H(\tfrac{-i-\{u/\del\}}{k_n}+\ell)
			\\
			&\quad\times  
			(y_{u}-y_{[u/\del]\del})(\si_{u}	\eta_{u} -\si_{[u/\del]\del}\eta_{[u/\del]\del}) du.
		\end{align*} 
		In fact, we can further change the upper bound of the integral and replace $\wt Q^{n,\ell}_{31}(t)$ by 
		\begin{equation}\label{eq:hat-Q}\begin{split}
				\wh Q^{n,\ell}_{31}(t)	&=2(k_{n}\delta_{n})^{-1/2-H} \frac{1}{k_{n}}
				\int_{0}^t \sum_{i=0}^{2k_n-1}   \chi(\tfrac{i}{2k_n-1})\Delta^2_{1} G_H(\tfrac{-i-\{u/\del\}}{k_n}+\ell)
				\\
				&\quad\times  
				(y_{u}-y_{[u/\del]\del})(\si_{u}	\eta_{u} -\si_{[u/\del]\del}\eta_{[u/\del]\del}) du.
		\end{split}\end{equation}
		Indeed, by \eqref{eq:Delta-G}, $\E[\sup_{t\in[0,T]}\lvert   \wt Q^{n,\ell}_{31}(t)-\wh Q^{n,\ell}_{31}(t)\rvert]\lec (k_n\del)^{1/2-H}\del^{1/2+H} =o((k_n\del)^{1/2})$.
		
		Now recall the definition of $\chi(t)$, which is $-1$ for $t<\frac12$ and $1$ for $t\geq\frac12$. Therefore,
		\begin{align*}
			\wh Q^{n,\ell}_{31}(t)	&=\begin{multlined}[t][0.85\textwidth]\frac{2(k_{n}\delta_{n})^{-1/2-H} }{k_{n}}
				\int_{0}^{t} \sum_{i=0}^{k_n-1} \Bigl\{\Delta^2_{1} G_H(\tfrac{-i-\{u/\del\}}{k_n}+\ell-1)
				\\
				-\Delta^2_{1} G_H(\tfrac{-i-\{u/\del\}}{k_n}+\ell)\Bigr\}
				(y_{u}-y_{[u/\del]\del})(\si_{u}	\eta_{u} -\si_{[u/\del]\del}\eta_{[u/\del]\del}) du\end{multlined}\\
			&=\begin{multlined}[t][0.85\textwidth] - \frac{2(k_{n}\delta_{n})^{-1/2-H}}{k_{n}}
				\int_{0}^{t} \sum_{i=0}^{k_n-1} \Delta^3_1 G_H (\ell-1-\tfrac{i+\{u/\del\}}{k_n}) \\
				\times (y_{u}-y_{[u/\del]\del})(\si_{u}	\eta_{u} -\si_{[u/\del]\del}\eta_{[u/\del]\del}) du,\end{multlined}
		\end{align*}
		which shows that $\wh Q^{n,\ell}_{31}(t)$ is nothing else but the bias term $\cala^{n,\ell,k_n}_t$. This establishes the first claim of the lemma. The second follows from \eqref{eq:bias} by observing that $\Delta^3_1 G_H$ is a bounded function (and, of course, that $y_u-y_{[u/\del]\del}$ and $\si_u\eta_u-\si_{[u/\del]\del}\eta_{[u/\del]\del}$ are of order $\del^{1/2}$ and $\del^H$, respectively). The last two assertions are obvious.
	\end{proof}
	
	\begin{proof}[Proof of Proposition \ref{prop:approx1}] 
		By \eqref{eq:Jn1}, we have that 
		\begin{align*}
			J^n_{1,i}	&= \frac{2}{k_n\del} \int_{(i-1)\del}^{(i+k_n-1)\del} (y_s-y_{[s/\del]\del}) dy_s+\frac{2}{k_n\del} \int_{(i-1)\del}^{(i+k_n-1)\del} (y_s-y_{[s/\del]\del}) b_sds\\
			&\quad+\frac{2}{k_n\del} \int_{(i-1)\del}^{(i+k_n-1)\del} \int_{[s/\del]\del}^s b_r dr dy_s,
		\end{align*}
		which implies
		\begin{align*}
			J^n_{1,i+k_n}-J^n_{1,i}	&=\frac{2}{k_n\del} \int_{(i-1)\del}^{(i+2k_n-1)\del} \chi(\tfrac{[s/\del]-i+1}{2k_n-1})(y_{s}-y_{[s/\del]\del}) dy_s \\
			&\quad+\frac{2}{k_n\del} \int_{(i-1)\del}^{(i+2k_n-1)\del} \chi(\tfrac{[s/\del]-i+1}{2k_n-1})(y_{s}-y_{[s/\del]\del}) b_sds\\
			&\quad +\frac{2}{k_n\del} \int_{(i-1)\del}^{(i+2k_n-1)\del} \chi(\tfrac{[s/\del]-i+1}{2k_n-1})\int_{[s/\del]\del}^s b_r dr dy_s.
		\end{align*}
		Clearly, the last term is $O_\P(\sqrt{\del/k_n})$, while $J^n_{1,i+k_n}-J^n_{1,i}=O_\P(\sqrt{1/k_n})$. Therefore, the contribution of the former to $Z^{n,\ell}_1(t)$ is $O_\P((k_n\del)^{1-2H}(k_n\del)^{-1}k_n^{-1/2}(\del/k_n)^{1/2})=O_\P(k_n^{-1-2H}\del^{1/2-2H})$, which, as the reader may verify, is $o_\P((k_n\del)^{1/2})$ for all $\kappa\in[\frac{2H}{2H+1},\frac12]$. Therefore, 
		\[ (k_n\del)^{-1/2}(Z^{n,\ell}_1(t)-M^{n,\ell}_1(t))\approx (k_n\del)^{-1/2}(F^n_1(t)+ F^n_2(t)), \]
		where
		\begin{align*}
			F^n_1(t)&=4(k_n\delta_n)^{-1-2H} \frac{1}{k_{n}}\sum_{i=1}^{[t/\del]-(\ell+2)k_n+1} \int_{(i-1)\del}^{(i+2k_n-1)\del} \chi(\tfrac{[s/\del]-i+1}{2k_n-1})(y_{s}-y_{[s/\del]\del}) dy_s   \\
			&\quad\times \int_{(i+\ell k_n-1)\del}^{(i+(\ell+2)k_n-1)\del} \chi(\tfrac{[s/\del]-i-\ell k_n+1}{2k_n-1})(y_{s}-y_{[s/\del]\del}) b_sds,\\
			F^n_2(t)&=4(k_n\delta_n)^{-1-2H} \frac{1}{k_{n}}\sum_{i=1}^{[t/\del]-(\ell+2)k_n+1}\int_{(i+\ell k_n-1)\del}^{(i+(\ell+2)k_n-1)\del} \chi(\tfrac{[s/\del]-i-\ell k_n+1}{2k_n-1}) \\
			&\quad\times(y_{s}-y_{[s/\del]\del}) dy_s  \int_{(i-1)\del}^{(i+2k_n-1)\del} \chi(\tfrac{[s/\del]-i+1}{2k_n-1})(y_{s}-y_{[s/\del]\del}) b_sds.
		\end{align*}
		
		Because $\ell\geq2$, the $i$th term in $F^n_2(t)$ is $\calf_{(i+(\ell+2)k_n-1)\del}$-measurable with a vanishing $\calf_{(i+\ell k_n-1)\del}$-conditional expectation. Thus, by a martingale size estimate (see \cite[Appendix~A]{CDL22}) and the bounds found in the previous paragraph, we obtain that
		\begin{align*}
			\E\biggl[\sup_{t\in[0,T]}\lvert F^n_2(t) \rvert\biggr] &\lec (k_n\del)^{-1-2H}k_n^{-1}\sqrt{k_n/\del}(k_n\del^2)^{1/2}k_n\del^{3/2}=(k_n\del)^{-2H}\del  \\
			&=o((k_n\del)^{1/2})
		\end{align*}  
		for all $\kappa\in[\frac{2H}{2H+1},\frac12]$. Regarding $F^n_1(t)$, observe that if we just applied a term-by-term size estimate, we would obtain $(k_n\del)^{-1/2}F^n_1(t)=o_\P(1)$ if $\kappa>\frac{2H}{2H+1}$ but only $O_\P(1)$ if  $\kappa=\frac{2H}{2H+1}$. To handle the latter case, note that we can replace $b_s$ in $F^n_1(t)$ by $b_{(i-1)\del}$ (by the preceding arguments and \eqref{eq:b-2}, the error is $o_\P((k_n\del)^{1/2})$). After doing so, the $i$th {\color{blue}term} in $F^n_1(t)$ will have a zero $\calf_{(i+\ell k_n-1)\del}$-conditional expectation, so applying another martingale size estimate yields $F^n_1(t)=o_\P((k_n\del)^{1/2})$.
		
		It remains to prove the last statement of the proposition. Because $\ell\geq2$, it is easy to see that the $i$th term in \eqref{eq:M1} is $\calf_{(i-1+(\ell+2)k_n)\del}$-measurable while having a zero $\calf_{(i-1+\ell k_n)\del}$-conditional mean. By yet another martingale size estimate, it follows that 
		\[ \E\biggl[\sup_{t\in[0,T]}\lvert M^{n,\ell}_1(t) \rvert\biggr] \lec (k_n\del)^{-1-2H}k_n^{-1}\sqrt{k_n/\del}k_n\del^2 =o((k_n\del)^{1/2})\]
		for   $\kappa>\frac{2H}{2H+1}$. 
	\end{proof}
	
	\section{Details for Section~\ref{sec:CLT2}}\label{sec:details}
	
	\begin{proof}[Proof of Proposition~\ref{prop:Mapprox}]
		Let us start with $M^{n,\ell,k_n}_2(t)$. Interchanging summation over $i$ with the $d\bW_r$-integral in \eqref{eq:M2} and breaking the latter into small pieces of length $\del$, we can rewrite
		$
		\del^{-(1-\kappa)/2} M^{n,\ell,k_n}_2(t)	=\sum_{j=1}^{[t/\del]} \wt\zeta^{n,j,\ell,k_n}_2, $
		where
		\begin{equation}\label{eq:zeta2}\begin{split}
				\wt \zeta^{n,j,\ell,k_n}_2	&=\frac{\del^{-(1-\kappa)/2}(k_n\del)^{-1-2H}}{k_n}\int_{(j-1)\del}^{j\delta_n}\int_0^r \sum_{i= ([r/\del]-(\ell+2)k_n+2)\vee1}^{[t/\del]-(\ell+2)k_n+1} \\
				&\quad\times\Bigl\{ \Delta^{2}_{k_n\del}G_H((i-1)\del-r) \Delta^{2}_{k_n\del}G_H((i+\ell k_n-1)\del-u)   \\
				&\qquad+\Delta^{2}_{k_n\del}G_H((i+\ell k_n-1)\del-r)  \Delta^{2}_{k_n\del}G_H((i-1)\del-u)\Bigr\} \bta_ud\bW_u \bta_rd\bW_r\\
				&=\del^{-(1-\kappa)/2}\int_{(j-1)\del}^{j\delta_n}\int_0^r \frac1{k_n} \Biggl( \sum_{i= (1-(\ell+2)k_n)\vee (-[r/\del])}^{[t/\del]-[r/\del]-(\ell+2)k_n }   \Delta^{2}_{1}G_H(\tfrac{i-\{r/\del\}}{k_n}) \\
				&\quad\times \Delta^{2}_{1} G_H(\tfrac{i-\{r/\del\}}{k_n}+\tfrac{r-u}{k_n\del}+\ell) +\sum_{i= (1-2k_n)\vee (\ell k_n-[r/\del])}^{[t/\del]-[r/\del]-2k_n}  \Delta^2_1 G_H(\tfrac{i-\{r/\del\}}{k_n})\\
				&\quad\times\Delta^2_1G_H(\tfrac{i-\{r/\del\}}{k_n}+\tfrac{r-u}{k_n\del}-\ell)\Biggr) \bta_ud\bW_u \bta_rd\bW_r.
			\end{split}\raisetag{-6.7\baselineskip}\end{equation}
		Let us bound the $p$th moment of $\wt\zeta^{n,j,\ell,k_n}_2$ for $p\geq2$ and draw some conclusions. 
		By the Burkholder--Davis--Gundy inequality and similar steps to \eqref{eq:step1} and \eqref{eq:step2}, we have that  
		\begin{align*}
			\E[\lvert\wt\zeta^{n,j,\ell,k_n}_2\rvert^p]	&\lec \del^{-(1-\kappa)p/2}\biggl( \int_{(j-1)\del}^{j\del} \int_0^r \biggl( \frac1{k_n} \sum_{i=1-2k_n}^\infty \Bigl\{\Delta^2_1 G_H(\tfrac{i-\{r/\del\}}{k_n})\\
			&\quad\times \Delta^2_1G_H(\tfrac{i-\{r/\del\}}{k_n}+\tfrac{r-u}{k_n\del}+\ell) +\Delta^2_1 G_H(\tfrac{i-\{r/\del\}}{k_n})\\
			&\quad\times \Delta^2_1G_H(\tfrac{i-\{r/\del\}}{k_n}+\tfrac{r-u}{k_n\del}-\ell)\Bigr\} \biggr)^2 dudr\biggr)^{p/2}.
		\end{align*}
		Changing $(r-u)/(k_n\del)$ to $u$ and noticing that $\frac1{k_n}\sum_{i=1-2k_n}^\infty$ is a Riemann sum, we obtain from \eqref{eq:Delta-G-2} that
		\begin{equation}\label{eq:help4} 
			\begin{split}
				\E[\lvert\wt\zeta^{n,j,\ell,k_n}_2\rvert^p]	&\lec\begin{multlined}[t][0.7\textwidth]  \biggl( \int_{(j-1)\del}^{j\del} \int_0^\infty \biggl(\int_{-2}^\infty \Bigl\{\Delta^2_1 G_H(v)\Delta^2_1 G_H(v+u+\ell) \\
					+\Delta^2_1 G_H( v)\Delta^2_1 G_H( v+u-\ell)\Bigr\} dv\biggr)^2 dudr\biggr)^{p/2}\end{multlined}\\
				&\lec \del^{p/2}.
			\end{split}
		\end{equation}
		Consequently, $\wt\zeta^{n,j,\ell,k_n}_2$ is of size $\del^{1/2}$, uniformly in $i$. Because \eqref{eq:M2} is a sum of martingale increments (note that $\E[\wt\zeta^{n,j,\ell,k_n}_2\mid \calf_{(j-1)\del}]=0$), it follows that \eqref{eq:M2} is $O_\P(1)$. This is, of course, expected because \eqref{eq:M2} is supposed to contribute to the CLT. But what this calculation also shows is that before we try to  find the limit of \eqref{eq:M2}, we can make any modifications that lead to an $o_\P(1)$ error. For example, we can replace $\bta_r$ by $\bta_{(j-1)\del}$ (this incurs an $O_\P(\del^H)$ error) and replace $\frac{1}{k_n}$ times 
		the two sums after second equality in \eqref{eq:zeta2} by
		\[\int_{-2}^\infty \Bigl\{\Delta^{2}_{1}G_H(v)\Delta^{2}_{1}G_H(v+\tfrac{r-u}{k_n\del}+\ell)  
		+ \Delta^2_1 G_H(v)\Delta^2_1G_H(v+\tfrac{r-u}{k_n\del}-\ell)\Bigr\} dv\]
		(for modifying the upper and lower bounds of the summation, see the discussion after \eqref{eq:step2};  for the integral approximation, the error is at most $k_n^{-1/2-H}$ because $\Delta^2_1 G_H$ is $(\frac12+H)$-H\"older continuous). We will make two more changes, after which we will arrive at $\zeta^{n,j,\ell,k_n}_2$, hence proving the second relation in \eqref{eq:approx}: first, we   {\color{blue}change} the boundaries of the $d\bW_u$-integral from $\int_0^r$ to $\int_{r-k_n\del^{1-\eps}}^r$, where $\eps>0$ is a small but fixed number. Similarly to \eqref{eq:help4}, one can show that the resulting error is $\del^{\eps(1-H)}$. And second, we replace $\bta_u$ first by $\bta_{r-k_n\del^{1-\eps}}$ and then by $\bta_{(j-1)\del}$, which leads to an $O_\P((k_n\del^{1-\eps})^H)$ error. 
		
		Similar arguments can be employed to show the other two approximations in \eqref{eq:approx}. Note that  thanks to  Proposition \ref{prop:approx1}  and   Lemma~\ref{lem:M3}, we only have to consider the case where   $\kappa=\frac{2H}{2H+1}$.  	 In order to show the first approximation in  \eqref{eq:approx},  we interchange summation and double integration in \eqref{eq:M1} and obtain 
		\begin{multline*}
			M_1^{n,\ell,k_n}(t)	=4(k_n\del)^{-1-2H}\frac1{k_n} \int_{\ell k_n}^{[t/\del]\del} \int_{([s/\del]-(\ell+2)k_n+1)\del\vee0}^{([s/\del]-(\ell-2)k_n)\del\wedge ([t/\del]-\ell k_n)\del}\\ \sum_{i=(2+[s/\del]-(\ell+2)k_n)\vee (2+[r/\del]-2k_n)\vee 1}^{([s/\del]-\ell k_n+1)\wedge ([r/\del]+1)\wedge ([t/\del]-(\ell+2)k_n+1)}  \chi(\tfrac{[s/\del]-i-\ell k_n+1}{2k_n-1}) \chi(\tfrac{[r/\del]-i+1}{2k_n-1})\\
			\times (y_{r}-y_{[r/\del]\del})\si_r dW_r (y_{s}-y_{[s/\del]\del})\si_sdW_s. 
		\end{multline*}
		We change $i+\ell k_n-1-[s/\del]$ to $i$ and, with similar arguments to those after \eqref{eq:help4}, omit the last $\wedge (\cdots)$ and $\vee (\cdots)$ in  the boundaries of both the $dW_r$-integral and the sum over $i$. As a result,
		\begin{multline*}
			\del^{-(1-\kappa)/2}M_1^{n,\ell,k_n}(t)	\approx 4 \del^{-(1-\kappa)/2}(k_n\del)^{-1-2H}\frac1{k_n} \int_{\ell k_n}^{[t/\del]\del} \int_{([s/\del]-(\ell+2)k_n+1)\del}^{([s/\del]-(\ell-2)k_n)\del}\\ \sum_{i=(1-2k_n)\vee ([r/\del]-[s/\del]+(\ell-2)k_n+ 1)}^{0\wedge ([r/\del]-[s/\del]+\ell k_n}  \chi(\tfrac{-i}{2k_n-1}) \chi(\tfrac{[r/\del]-[s/\del]-i+\ell k_n}{2k_n-1})\\
			\times (y_{r}-y_{[r/\del]\del})\si_r dW_r (y_{s}-y_{[s/\del]\del})\si_sdW_s. 
		\end{multline*}
		Because
		\[ \sum_{i=(1-2k_n)\vee (1-2k_n+m)}^{0\wedge m}  \chi(\tfrac{-i}{2k_n-1}) \chi(\tfrac{-i+m}{2k_n-1}) = 2k_n\xi(\tfrac{m}{2k_n}), \]
		the first approximation in \eqref{eq:approx} follows by replacing all $\si$'s by $\si_{[s/\del]\del}$.
		%


		Regarding the last approximation in  \eqref{eq:approx}, we have   analogously to \eqref{eq:zeta2}    that 
		\begin{equation}\label{eq:M3} \begin{split}
				\del^{-(1-\kappa)/2}M^{n,\ell,k_n}_{31}(t) &= \sum_{j=1}^{[t/\del]-\ell k_n} \wt\zeta^{n,j,\ell,k_n}_{31}, \quad 	\del^{-(1-\kappa)/2}M^{n,\ell,k_n}_{32}(t) = \sum_{j=1}^{[t/\del]}\wt \zeta^{n,j,\ell,k_n}_{32},\\
				\del^{-(1-\kappa)/2}M^{\prime n,\ell,k_n}_{3}(t) &= \sum_{j=1+\ell k_n}^{[t/\del]}\wt \zeta^{\prime n,j,\ell,k_n}_{3}, \end{split}
		\end{equation}
		where
		\begin{align*}
			\wt\zeta^{n,j,\ell,k_n}_{31}	&=\frac{2(k_n\del)^{-1/2-H}\del^{-(1-\kappa)/2}}{k_n}\int_{(j-1)\del}^{j\del} \int_0^s \sum_{i=(1-2k_n)\vee (-[s/\del])}^{0\wedge ([t/\del]-[s/\del]-(\ell+2)k_n)} \chi(\tfrac{-i}{2k_n-1}) \\
			&\quad\times\Delta^2_1 G_H(\tfrac{i-\{s/\del\}}{k_n}+\tfrac{s-r}{k_n}+\ell) \bta_r d\bW_r (y_s-y_{[s/\del]\del}) \si_sdW_s,\\
			\wt\zeta^{n,j,\ell,k_n}_{32}	&=\frac{2(k_n\del)^{-1/2-H}\del^{-(1-\kappa)/2}}{k_n}\int_{(j-1)\del}^{j\del} \int_{([r/\del]-(\ell+2)k_n+1)\del\vee0}^{r\wedge ([t/\del]-\ell k_n)\del}  (y_s-y_{[s/\del]\del}) \\
			&\quad\times\sum_{i=(1-2k_n)\vee([r/\del]-[s/\del]-(\ell+2)k_n+1) \vee(-[s/\del])}^{0\wedge ([t/\del]-[s/\del]-(\ell+2)k_n)} \chi(\tfrac{-i}{2k_n-1})\\
			&\quad\times\Delta^2_1 G_H(\tfrac{i-\{s/\del\}}{k_n}-\tfrac{r-s}{k_n}+\ell)  \si_sdW_s\bta_r d\bW_r,\\
			\wt\zeta^{\prime n,j,\ell,k_n}_{3}	&=\frac{2(k_n\del)^{-1/2-H}\del^{-(1-\kappa)/2}}{k_n}\int_{(j-1)\del}^{j\del} \int_0^{([s/\del]-(\ell-2)k_n)\del\wedge ([t/\del]-\ell k_n)\del} \si_s  \\
			&\quad\times(y_s-y_{[s/\del]\del})\sum_{i=(1-2k_n)\vee([r/\del]-[s/\del]+(\ell-2)k_n+1)\vee (\ell k_n-[s/\del])}^{0\wedge ([t/\del]-[s/\del]-2k_n)} \chi(\tfrac{-i}{2k_n-1})\\
			&\quad\times\Delta^2_1 G_H(\tfrac{i-\{s/\del\}}{k_n}+\tfrac{s-r}{k_n}-\ell) \bta_r d\bW_r dW_s.
		\end{align*}
		Since $\kappa=\frac{2H}{2H+1}$, it can be shown similarly to \eqref{eq:help4} and the subsequent paragraph that each of the three terms defined in \eqref{eq:M3} is of order $O_\P(1)$. Therefore, by the same type of modifications (i.e., discretization of $\bta$ and $\si$, dropping  $\wedge (\cdots)$ and $\vee (\cdots)$ in the summation over $i$, approximating sums by integrals, and restricting the $d\bW_r$-integral in $\wt\zeta^{n,j,\ell,k_n}_{31}$ and $\wt\zeta^{\prime n,j,\ell,k_n}_{31}$ to $r\geq s-k_n\del^{1-\eps}$), 
		we obtain $\del^{-(1-\kappa)/2}(M^{n,\ell,k_n}_{31\mid32}(t)-\bar M^{n,\ell,k_n}_{31\mid32}(t))\limL0$ and $\del^{-(1-\kappa)/2}(M^{\prime n,\ell,k_n}_{3}(t)-\bar M^{\prime n,\ell,k_n}_{3}(t))\limL0$, where $$\bar M^{n,\ell,k_n}_{31\mid32}(t)=\sum_{j=1}^{[t/\del]-\ell k_n} \bar \zeta^{n,j,\ell,k_n}_{31\mid32},\qquad \bar M^{\prime n,\ell,k_n}_{3}(t)=\sum_{j=1+\ell k_n}^{[t/\del]} \bar \zeta^{\prime n,j,\ell,k_n}_{3}$$ and 
		\begin{equation}\label{eq:wtzeta31} \begin{split}
				\bar \zeta^{n,j,\ell,k_n}_{31}&=2(k_n\del)^{-1/2-H}\del^{-(1-\kappa)/2}\int_{(j-1)\del}^{j\del} \si_{(j-1)\del}^2 \int_{s-k_n\del^{1-\eps}}^s \int_0^2 \chi(\tfrac{u}2)   \\
				&\quad\times\Delta^2_1 G_H(\tfrac{s-r}{k_n\del} + \ell -u) du\,\bta_{(j-1)\del} d\bW_r  (W_s-W_{(j-1)\del})dW_s,\\
				\bar \zeta^{n,j,\ell,k_n}_{32}&=2(k_n\del)^{-1/2-H}\del^{-(1-\kappa)/2}\int_{(j-1)\del}^{j\del}\! \si_{(j-1)\del}^2 \int_{([r/\del]-(\ell+2)k_n+1)\del}^r \!(W_s-W_{[s/\del]\del}) \\
				&\quad\times\int_0^{2-((\frac{r-s}{k_n\del}-\ell)\vee0)} \chi(\tfrac{u}2) \Delta^2_1 G_H( \ell-\tfrac{r-s}{k_n\del} -u) du dW_s\bta_{(j-1)\del} d\bW_r,\\
				\bar \zeta^{\prime n,j,\ell,k_n}_{3}&=2(k_n\del)^{-1/2-H}\del^{-(1-\kappa)/2}\int_{(j-1)\del}^{j\del} \si_{(j-1)\del}^2 \int_{s-k_n\del^{1-\eps}}^{([s/\del]-(\ell-2)k_n)\del}  (W_s-W_{(j-1)\del})  \\
				&\quad\times\int_0^{2-(( \ell-\frac{s-r}{k_n\del})\vee 0)} \chi(\tfrac u2) \Delta^2_1 G_H(\tfrac{s-r}{k_n\del} - \ell -u) du \bta_{(j-1)\del} d\bW_r dW_s.
			\end{split}\raisetag{-6\baselineskip}
		\end{equation}
		Let us make three observations: First, for any of the three terms in \eqref{eq:wtzeta31}, by a straightforward power-counting argument, if we restrict the inner integral to $((j-1)\del,s)$ or $((j-1)\del,r)$, respectively, the second moment of the resulting term will be of order $\del^{1+2H/(2H+1)}=o(\del)$, showing that the latter is asymptotically negligible (cf.\ \eqref{eq:help4} and the subsequent arguments). Second, by the definition of $\chi(t)$,
		$$ \int_0^2 \chi(\tfrac u2)\Delta^2_1 G_H(\tfrac{s-r}{k_n\del}+\ell-u)du=-\int_0^1 \Delta^3_1 G_H(\tfrac{s-r}{k_n\del} + \ell -u-1) du. $$
		And finally, because $\Delta^2_1 G_H (v)=0$ for $v\leq -2$, there is no harm in extending the $du$-integral in $\bar \zeta^{ n,j,\ell,k_n}_{32}$ and $\bar \zeta^{\prime n,j,\ell,k_n}_{3}$ up to the upper bound $2$. The aforementioned modifications turn $\ov \zeta^{n,j,\ell,k_n}_{32}$ into $\zeta^{n,j,\ell,k_n}_{32}$ and the sum $\ov \zeta^{n,j,\ell,k_n}_{31}+\ov \zeta^{\prime n,j,\ell,k_n}_{3}$ into $\zeta^{n,j,\ell,k_n}_{31}$, which establishes the last relation in \eqref{eq:approx}.
	\end{proof}
	
	\begin{proof}[Proof of Equation \eqref{eq:tight}]
		For any $\nu\in\{1,2,3\}$, we have seen in the proof of Proposition~\ref{prop:Mapprox} that $\E[\lvert\zeta^{n,j,\ell,k_n}_\nu\rvert^p]\lec \del^{p/2}$, uniformly in $j$. Setting $p=4$, we easily obtain that the left-hand side of \eqref{eq:tight} is $O_\P(\del)$.
	\end{proof}
	
	\begin{proof}[Proof of Equation~\eqref{eq:stable}]
		We only show \eqref{eq:stable} for $\nu=2$ as the arguments for $\nu=1$ and $\nu=3$ are similar. 
		Note that $\zeta^{n,j,\ell,k_n}_2$ can be decomposed into two parts, $\zeta^{n,j,\ell,k_n}_{21}$ and $\zeta^{n,j,\ell,k_n}_{22}$, which are defined in the same way as $\zeta^{n,j,\ell,k_n}_2$ in \eqref{eq:wtzeta2}, except that the $d\bW_u$-integral is restricted to $(r-k_n\del^{1-\eps},(j-1)\del)$ and $((j-1)\del,r)$, respectively. By definition, $\zeta^{n,j,\ell,k_n}_{22}$  belongs to the second Wiener chaos with respect to $\bW$, conditionally on $\calf_{(j-1)\del}$. Thus, $\E[\zeta^{n,j,\ell,k_n}_{22}(N_{j\del}-N_{(j-1)\del})\mid \calf_{(j-1)\del}]=0$ by the orthogonality of Wiener chaoses of different orders if $N\in\{W,\wh W\}$ and by the orthogonality of $N$ and $W$ otherwise. If $N$ is orthogonal to $W$, we also have $\E[\zeta^{n,j,\ell,k_n}_{21}(N_{j\del}-N_{(j-1)\del})\mid \calf_{(j-1)\del}]=0$, so let us assume that $N=W$ or $N=\wh W$. Since the two cases are completely analogous, we take $N=W$. Then
		\begin{multline*}
			\E[\zeta^{n,j,\ell,k_n}_{21}(N_{j\del}-N_{(j-1)\del})\mid \calf_{(j-1)\del}]\\	=\del^{-(1-\kappa)/2}\int_{(j-1)\del}^{j\delta_n}\int_{r-k_n\del^{1-\eps}}^{(j-1)\del}\int_{-2}^\infty \Bigl\{\Delta^{2}_{1}G_H(v)\Delta^{2}_{1}G_H(v+\tfrac{r-u}{k_n\del}+\ell)  \\
			+ \Delta^2_1 G_H(v)\Delta^2_1G_H(v+\tfrac{r-u}{k_n\del}-\ell)\Bigr\} dv \bta_{(j-1)\del}d\bW_u \eta_{(j-1)\del}dr.
		\end{multline*}
		Since taking conditional expectation is a contraction on $L^2$, this term is still of size $O_\P(\del)$. Consequently, for the purpose of showing \eqref{eq:stable}, we may replace $\eta_{(j-1)\del}$ and $\bta_{(j-1)\del}$ in the previous display by $\eta_{(j-1-k_n\del^{-\eps})\del}$ and $\bta_{(j-1-k_n\del^{-\eps})\del}$, respectively. Once we have done so, the resulting expression will be $\calf_{(j-1)\del}$-measurable with vanishing $\calf_{(j-1-k_n\del^{-\eps})\del}$-conditional expectation. Therefore, by   a martingale size estimate (see \cite[Appendix~A]{CDL22}), it follows that 
		\begin{equation}\label{eq:help} 
			\E\biggl[ \biggl\lvert\sum_{j=1}^{[t/\del]} \E[\zeta^{n,j,\ell,k_n}_{21}(N_{j\del}-N_{(j-1)\del})\mid\calf_{(j-1)\del}]\biggr\rvert\biggr] \lec (k_n\del^{-\eps-1})^{1/2}\del \to0,
		\end{equation}
		proving \eqref{eq:stable} for $\nu=2$.
	\end{proof}
	
	\begin{proof}[Proof of Equation~\eqref{eq:cov}]
		Again let us start with $\nu=2$. There is no loss of generality to restrict ourselves to $m=1$ and $m'=2$, in which case we simply write $k_n=k_n^{(1)}$ and $k'_n=k_n^{(2)}$. We want to find the limit of 
		\begin{equation}\label{eq:R22}
			R^{n,\ell_1,k_n,\ell_2,k'_n}_{22}(t)=\sum_{j=1}^{[t/\del]} \E[  \zeta^{n,j,\ell_1,k_n}_2 \zeta^{n,j,\ell_2,k'_n}_2\mid \calf_{(j-1)\del}],
		\end{equation}
		where $\ell_1,\ell_2\geq2$, $k_n\sim \theta_1 \del^{-\kappa}$ and $k'_n \sim \theta_2\del^{-\kappa}$. 
		Moreover, by the flexibility we have in the truncation of the $d\bW_u$-integral in \eqref{eq:wtzeta2}, we may and will assume that it runs from $r-k_n\del^{1-\eps}$ to $r$ for both $\zeta^{n,j,\ell,k_n}_1$ and $\zeta^{n,j,\ell,k_n}_2$.
		By It\^o's isometry, we  then have $R^{n,\ell_1,k_n,\ell_2,k'_n}_{22}(t)=\sum_{\iota=1}^3 R^{n,\ell_1,k_n,\ell_2,k'_n}_{22,\iota}(t)$ where
		\begin{align*}
			R^{n,\ell_1,k_n,\ell_2,k'_n}_{22,1}(t)	&=  \begin{multlined}[t][0.75\textwidth] \sum_{j=1}^{[t/\del]} \del^{-(1-\kappa)}\lvert \bta_{(j-1)\del}\rvert^4 \int_{(j-1)\del}^{j\delta_n}\int_{r-k_n\del^{1-\eps}}^r\int_{-2}^\infty \Bigl\{\Delta^{2}_{1}G_H(v)  \\
				\times\Delta^{2}_{1}G_H(v+\tfrac{r-u}{k_n\del}+\ell_1)	+ \Delta^2_1 G_H(v)\Delta^2_1G_H(v+\tfrac{r-u}{k_n\del}-\ell_1)\Bigr\} dv\end{multlined}\\
			&\quad \begin{multlined}[t][0.75\textwidth]\times\int_{-2}^\infty \Bigl\{\Delta^{2}_{1}G_H(w)\Delta^{2}_{1}G_H(w+\tfrac{r-u}{k'_n\del}+\ell_2)   	\\
				+ \Delta^2_1 G_H(w)\Delta^2_1G_H(w+\tfrac{r-u}{k'_n\del}-\ell_2)\Bigr\} dw dudr,\end{multlined}\\
			R^{n,\ell_1,k_n,\ell_2,k'_n}_{22,2}(t)&=\begin{multlined}[t][0.75\textwidth]\sum_{j=1}^{[t/\del]} 
				\del^{-(1-\kappa)}\lvert \bta_{(j-1)\del}\rvert^2 \int_{(j-1)\del}^{j\delta_n}\int_{r-k_n\del^{1-\eps}}^{(j-1)\del}\int_{-2}^\infty \Bigl\{\Delta^{2}_{1}G_H(v)  \\
				\times\Delta^{2}_{1}G_H(v+\tfrac{r-u}{k_n\del}+\ell_1)	+ \Delta^2_1 G_H(v)\Delta^2_1G_H(v+\tfrac{r-u}{k_n\del}-\ell_1)\Bigr\} dv\end{multlined}\\
			&\quad\begin{multlined}[t][0.75\textwidth] \times\int_{r-k_n\del^{1-\eps}}^{u}\int_{-2}^\infty \Bigl\{\Delta^{2}_{1}G_H(w)\Delta^{2}_{1}G_H(w+\tfrac{r-u'}{k'_n\del}+\ell_2) 	+ \Delta^2_1 G_H(w) \\
				\times\Delta^2_1G_H(w+\tfrac{r-u'}{k'_n\del}-\ell_2)\Bigr\} dw\bta_{(j-1)\del} d\bW_{u'}\bta_{(j-1)\del} d\bW_udr,\end{multlined}\\
			R^{n,\ell_1,k_n,\ell_2,k'_n}_{22,3}(t)&=\begin{multlined}[t][0.75\textwidth]\sum_{j=1}^{[t/\del]} 
				\del^{-(1-\kappa)}\lvert \bta_{(j-1)\del}\rvert^2 \int_{(j-1)\del}^{j\delta_n}\int_{r-k_n\del^{1-\eps}}^{(j-1)\del}\int_{-2}^\infty \Bigl\{\Delta^{2}_{1}G_H(v)  \\
				\times	\Delta^{2}_{1}G_H(v+\tfrac{r-u}{k'_n\del}+\ell_2)	+ \Delta^2_1 G_H(v)\Delta^2_1G_H(v+\tfrac{r-u}{k'_n\del}-\ell_2)\Bigr\} dv\end{multlined}\\ &\quad\begin{multlined}[t][0.75\textwidth]\times\int_{r-k_n\del^{1-\eps}}^{u}\int_{-2}^\infty \Bigl\{\Delta^{2}_{1}G_H(w)\Delta^{2}_{1}G_H(w+\tfrac{r-u'}{k_n\del}+\ell_1) + \Delta^2_1 G_H(w) \\
				\times\Delta^2_1G_H(w+\tfrac{r-u'}{k_n\del}-\ell_1)\Bigr\} dw\bta_{(j-1)\del} d\bW_{u'}\bta_{(j-1)\del} d\bW_udr.\end{multlined}
		\end{align*}
		Repeating the argument leading to \eqref{eq:help}	
		shows that $R^{n,\ell_1,k_n,\ell_2,k'_n}_{22,2}(t)$ and $R^{n,\ell_1,k_n,\ell_2,k'_n}_{22,3}(t)$ are $O_\P((k_n\del^{-\eps}/\del)^{1/2}\del)=O_\P((k_n\del^{1-\eps})^{1/2})=o_\P(1)$, and hence they do not contribute to the limit of \eqref{eq:R22}.
		So only $R^{n,\ell_1,k_n,\ell_2,k'_n}_{22,1}(t)$ is asymptotically relevant. 
		
		By a change of variables ($(r-u)/\del^{1-\kappa}$ to $u$),
		\begin{align*}
			R^{n,\ell_1,k_n,\ell_2,k'_n}_{22,1}(t)	&= \begin{multlined}[t][0.75\textwidth] \sum_{j=1}^{[t/\del]}  \lvert \bta_{(j-1)\del}\rvert^4 \int_{(j-1)\del}^{j\delta_n}\int_0^{k_n\del^{\kappa-\eps}}\int_{-2}^\infty \Bigl\{\Delta^{2}_{1}G_H(v) \\
				\times\Delta^{2}_{1}G_H(v+u\tfrac{\del^{-\kappa}}{k_n}+\ell_1) 	+ \Delta^2_1 G_H(v)\Delta^2_1G_H(v+u\tfrac{\del^{-\kappa}}{k_n}-\ell_1)\Bigr\} dv\end{multlined} \\
			&\quad\begin{multlined}[t][0.75\textwidth]\times\int_{-2}^\infty \Bigl\{\Delta^{2}_{1}G_H(w)  \Delta^{2}_{1}G_H(w+u\tfrac{\del^{-\kappa}}{k'_n}+\ell_2) \\
				+ \Delta^2_1 G_H(w)\Delta^2_1G_H(w+u\tfrac{\del^{-\kappa}}{k'_n}-\ell_2)\Bigr\} dw dudr,\end{multlined}
		\end{align*}
		so we obtain $R^{n,\ell_1,k_n,\ell_2,k'_n}_{22,1}(t)\sim \ga^{\ell_1,\theta_1,\ell_2,\theta_2}_{2}(H)  \Ga_2(t)$ once we establish
		\begin{equation}\label{eq:rho22} 
			\begin{split}
				\ga^{\ell_1,\theta_1,\ell_2,\theta_2}_{2}(H) &= \begin{multlined}[t][0.7\textwidth] \int_0^{\infty}\int_{-2}^\infty \Bigl\{\Delta^{2}_{1}G_H(v)\Delta^{2}_{1}G_H(v+u/\theta_1+\ell_1)\\
					+ \Delta^2_1 G_H(v)\Delta^2_1G_H(v+u/\theta_1-\ell_1)\Bigr\} dv\end{multlined} \\
				&\quad	\begin{multlined}[t][0.7\textwidth]	\times \int_{-2}^\infty \Bigl\{\Delta^{2}_{1}G_H(w)\Delta^{2}_{1}G_H(w+u/\theta_2+\ell_2) \\
					+ \Delta^2_1 G_H(w)\Delta^2_1G_H(w+u/\theta_2-\ell_2)\Bigr\} dw du.\end{multlined}
			\end{split} 
		\end{equation}

		By \eqref{eq:Phi} (and its extension to $\ell\in\R$ as shown in the proof), the right-hand side equals
		\begin{equation}\label{eq:interm2} 
			\begin{split}
				& \begin{multlined}[t][0.9\textwidth] \frac{1}{4(2H+1)^2(2H+2)^2} \int_0^{\infty} (\delta^4_1\lvert u/\theta_1+\ell_1\rvert^{2H+2} + \delta^4_1\lvert u/\theta_1-\ell_1\rvert^{2H+2})\\
					\times(\delta^4_1 \lvert u /\theta_2 +\ell_2\rvert^{2H+2}+\delta^4_1 \lvert u /\theta_2-\ell_2\rvert^{2H+2})  du\end{multlined}\\
				&\begin{multlined}[t][0.9\textwidth]\quad=\frac{1}{4(2H+1)^2(2H+2)^2} \biggl(\int_\R \delta^4_1\lvert u/\theta_1+\ell_1\rvert^{2H+2}\delta^4_1 \lvert u /\theta_2 +\ell_2\rvert^{2H+2} du\\
					+\int_\R \delta^4_1\lvert u/\theta_1+\ell_1\rvert^{2H+2}\delta^4_1 \lvert u /\theta_2 -\ell_2\rvert^{2H+2} du\biggr),\end{multlined}
			\end{split}
		\end{equation}
		where the second step follows by symmetry.
		By Parseval's identity,
		\begin{equation}\label{eq:interm} 
			\begin{split}
				&\int_\R \delta^4_1\lvert u/\theta_1+\ell_1\rvert^{2H+2}\delta^4_1 \lvert u /\theta_2 +\ell_2\rvert^{2H+2} du\\
				&\qquad=\frac{1}{(\theta_1\theta_2)^{2H+2}}\int_\R \delta^4_{\theta_1}\lvert u+\ell_1\theta_1\rvert^{2H+2}\delta^4_{\theta_2} \lvert u +\ell_2 \theta_2\rvert^{2H+2} du\\
				&\qquad=\begin{multlined}[t][0.8\textwidth]\frac{1}{2\pi(\theta_1\theta_2)^{2H+2}}\int_\R e^{i(\ell_1\theta_1-\ell_2\theta_2)\xi}\calf[\lvert x\rvert^{2H+2}](\xi)^2 \\
					\times(e^{\frac12 i \theta_1\xi}-e^{-\frac12i\theta_1\xi})^4(e^{\frac12 i \theta_2\xi}-e^{-\frac12i\theta_2\xi})^4 d\xi.\end{multlined}
			\end{split}
		\end{equation}
		The product $(e^{\frac12 i \theta_1\xi}-e^{-\frac12i\theta_1\xi})^4(e^{\frac12 i \theta_2\xi}-e^{-\frac12i\theta_2\xi})^4$ translates into $\delta^4_{\theta_1}\delta^4_{\theta_2}$ in the time domain. Together with \eqref{eq:abs}, this yields
		\begin{align*}
			&\int_\R \delta^4_1\lvert u/\theta_1+\ell_1\rvert^{2H+2}\delta^4_1 \lvert u /\theta_2 +\ell_2\rvert^{2H+2} du\\
			&\quad = \begin{multlined}[t][0.9\textwidth]\frac{2\cos^2(\pi(H+\frac32))\Ga(2H+3)^2}{\pi(\theta_1\theta_2)^{2H+2}}\int_\R e^{i(\ell_1\theta_1-\ell_2\theta_2)\xi} \lvert\xi\rvert^{-4H-6}\\
				\times(e^{\frac12 i \theta_1\xi}-e^{-\frac12i\theta_1\xi})^4(e^{\frac12 i \theta_2\xi}-e^{-\frac12i\theta_2\xi})^4d\xi\end{multlined}\\
			&\quad=\frac{4\cos^2(\pi(H+\frac32))\cos(\pi(2H+\frac52))\Ga(2H+3)^2\Ga(-5-4H)}{\pi(\theta_1\theta_2)^{2H+2}}  \delta^4_{\theta_1}\delta^4_{\theta_2}\lvert \ell_2\theta_2-\ell_1\theta_1\rvert^{4H+5},
		\end{align*}
		where the last step is valid for all $H\in(0,\frac12)\setminus\{\frac14\}$. 
		Inserting this into \eqref{eq:interm2} and simplifying the resulting expression, we finally obtain
		\eqref{eq:rho22}
		if $H\notin\{\frac14,\frac12\}$. To obtain the results for $H\in\{\frac14,\frac12\}$, it suffices by the dominated convergence theorem to let $H\to\frac14$ and $H\to\frac12$ in the  formula established for $H\in(0,\frac12)\setminus\{\frac14\}$. As there is no singularity at $H=\frac12$, this formula   continues to hold for $H=\frac12$. For $H=\frac14$, it suffices to note that 
		$(1-1/\cos(2\pi H))(H-\frac14)\to\frac{1}{2\pi}$  as $H\to\frac14$ and that 
		$$ \lim_{H\to\frac14} \frac{\delta^4_{\theta_1}\delta^4_{\theta_2}\lvert \ell_2\theta_2-\ell_1\theta_1\rvert^{4H+5}}{H-\frac14} = 4 \delta^4_{\theta_1}\delta^4_{\theta_2}[\lvert \ell_2\theta_2-\ell_1\theta_1\rvert^{6}\log\lvert \ell_2\theta_2-\ell_1\theta_1\rvert] $$
		by L'H\^opital's rule. 
		
		\smallskip
		Next, we consider $\nu=1$.  As in \eqref{eq:R22} we  want to find the limit of 
		\begin{eqnarray*}
			R^{n,\ell_1,k_n,\ell_2,k'_n}_{11}(t)=\sum_{j=1}^{[t/\del]} 
			\mE[ \zeta_{1}^{n,j,\ell_{1},k_{n} }\zeta_{1}^{n,j,\ell_{2},k_{n}' }  \mid \cf_{(j-1)\delta_{n}} ],
		\end{eqnarray*}
		where $\ell_1,\ell_2\geq2$, $k_n\sim \theta_1 \del^{-\kappa}$ and $k'_n \sim \theta_2\del^{-\kappa}$ with $\kappa=\frac{2H}{2H+1}$. 
		
		By It\^o's isometry, 
		\begin{align*}
			R^{n,\ell_1,k_n,\ell_2,k'_n}_{11}(t)  &=64\del^{\kappa-1}(\theta_1\theta_2)^{-1-2H}\del^{-2}\sum_{j=1}^{[t/\del]} \int_{(j-1)\del}^{j\del} \si_{(j-1)\del}^8 \\
			&\quad\times\begin{multlined}[t][0.75\textwidth] \E\biggl[ \int_{([s/\del]-(\ell_1+2)k_n+1)\del}^{([s/\del]-(\ell_1-2)k_n)\del}\xi(\tfrac{[r/\del]-[s/\del]+\ell_1 k_n}{2k_n})(W_{r}-W_{[r/\del]\del}) dW_r\\
				\times  \int_{([s/\del]-(\ell_2+2)k'_n+1)\del}^{([s/\del]-(\ell_2-2)k'_n)\del}\xi(\tfrac{[r/\del]-[s/\del]+\ell_2 k'_n}{2k'_n})(W_{r}-W_{[r/\del]\del}) dW_r\\
				\times  (W_{s}-W_{[s/\del]\del})^2 \mathrel{\Big|} \calf_{(j-1)\del} \biggr]  ds. \end{multlined}
		\end{align*}
		Further conditioning on $\calf_{[s/\del]\del}=\calf_{(j-1)\del}$, we can replace $(W_s-W_{[s/\del]\del})^2$ simply by $s-(j-1)\del$. Hereafter, we can further remove the boundaries of the two $dW_r$-integrals because $\xi(t)=0$ for $\lvert t\rvert >1$. Consequently,
		\begin{align*}
			R^{n,\ell_1,k_n,\ell_2,k'_n}_{11}(t)  &=64\del^{\kappa-1}(\theta_1\theta_2)^{-1-2H}\del^{-2}\sum_{j=1}^{[t/\del]} \si_{(j-1)\del}^8 \int_{(j-1)\del}^{j\del} (s-(j-1)\del)\\
			&\quad\times\int_\R \xi(\tfrac{[r/\del]-[s/\del]}{2k_n}+\tfrac{\ell_1}2)\xi(\tfrac{[r/\del]-[s/\del]}{2k'_n}+\tfrac{\ell_2}2)(r-[r/\del]\del)dr ds. 
		\end{align*}
		Changing $r-[s/\del]\del$ to $u$, we can write
		\begin{align*}
			R^{n,\ell_1,k_n,\ell_2,k'_n}_{11}(t)  &=64\del^{\kappa-1}(\theta_1\theta_2)^{-1-2H}\del^{-2}\sum_{j=1}^{[t/\del]} \si_{(j-1)\del}^8 \int_{(j-1)\del}^{j\del} (s-(j-1)\del)\\
			&\quad\times\int_\R \xi(\tfrac{[u/\del]}{2k_n}+\tfrac{\ell_1}2)\xi(\tfrac{[u/\del]}{2k'_n}+\tfrac{\ell_2}2)(u-[u/\del]\del)du ds\\
			&=64\del^{\kappa-1}(\theta_1\theta_2)^{-1-2H}\del^{-2}\sum_{j=1}^{[t/\del]} \si_{(j-1)\del}^8 \int_{(j-1)\del}^{j\del} (s-(j-1)\del)\\
			&\quad\times\sum_{i=-\infty}^\infty\int_{(i-1)\del}^{i\del} \xi(\tfrac{i-1}{2k_n}+\tfrac{\ell_1}2)\xi(\tfrac{i-1}{2k'_n}+\tfrac{\ell_2}2)(u-(i-1)\del)du ds. 
		\end{align*}
		Computing the $duds$-integrals and observing that $\del^{\kappa}\sum_{i=-\infty}^\infty$ and {\color{blue}$\del\sum_{j=1}^{[t/\del]}$} are  Riemann sums, we have that
		\begin{equation*}
			R^{n,\ell_1,k_n,\ell_2,k'_n}_{11}(t)  \sim16 (\theta_1\theta_2)^{-1-2H}\int_0^t \si_s^8 ds
			\int_\R \xi(\tfrac{v}{2\theta_1}+\tfrac{\ell_1}2)\xi(\tfrac{v}{2\theta_2}+\tfrac{\ell_2}2) dv.
		\end{equation*}
		
		Next, we realize that $\xi(t)$ is equal to  $-\frac14\delta^4_1\lvert x\rvert$ evaluated at $x=2t$. Thus,
		\begin{equation*}
			R^{n,\ell_1,k_n,\ell_2,k'_n}_{11}(t)  \sim  (\theta_1\theta_2)^{-2-2H}\Ga_1(t)
			\int_\R \delta^4_{\theta_1} \lvert v+\ell_1\theta_1\rvert \delta^4_{\theta_2} \lvert v+\ell_2\theta_2\rvert dv.
		\end{equation*}

		It remains to derive a closed-form expression for the integral.  By Parseval's identity and \eqref{eq:abs} (and a limit argument noting that $2\Ga(\al+1)\cos(\frac{\pi(\al+1)}2)\to \frac\pi6$ as $\al\to-4$), it is given by
		\begin{equation*}
			\frac{2}{\pi}\int_\R e^{i(\ell_1\theta_1-\ell_2\theta_2)\xi}\lvert \xi\rvert^{-4} (e^{\frac12 i \theta_1\xi}-e^{-\frac12i\theta_1\xi})^4(e^{\frac12 i \theta_2\xi}-e^{-\frac12i\theta_2\xi})^4 d\xi \\
			= \frac{1}{3}\delta^4_{\theta_1}\delta^4_{\theta_2}\lvert \ell_1\theta_1-\ell_2\theta_2\rvert^3,
		\end{equation*}
		which completes the proof of \eqref{eq:cov} for $\nu=1$.

		\medskip	
		Finally, let us consider $\nu=3$ and, as a first step,  note that 
		\begin{equation}\label{eq:R3132} 
			R^{n,\ell_1,k_n,\ell_2,k'_n}_{31,32}(t)=\sum_{j=1}^{[t/\del]} \E[  \zeta^{n,j,\ell_1,k_n}_{31} \zeta^{n,j,\ell_2,k'_n}_{32}\mid \calf_{(j-1)\del}]=0
		\end{equation}
		because $\E[W_s-W_{(j-1)\del}\mid \calf_{(j-1)\del}]=0$. Thus, it remains to find the   limits of 
		\begin{equation}\label{eq:R33}
			R^{n,\ell_1,k_n,\ell_2,k'_n}_{31|32,31|32}(t)=\sum_{j=1}^{[t/\del]} \E[  \zeta^{n,j,\ell_1,k_n}_{31|32} \zeta^{n,j,\ell_2,k'_n}_{31|32}\mid \calf_{(j-1)\del}].
		\end{equation}
		To this end, we define 
		\begin{equation}\label{eq:calg} 
			\calg_H(t)=\frac{K_H^{-1}}{(H+\frac12)(H+\frac32)} t^{H+3/2}_+,
		\end{equation}
		such that $\int_0^1 G_H(t-u)^{H+1/2} du = \calg_H(t)-\calg_H(t-1)$ for all $t\in\R$ and therefore,
		\begin{equation}\label{eq:int} 
			\int_0^1  \Delta^3_1 G_H(\tfrac{s-r}{k_n\del} + \ell -u-1) du = \delta_1^4 \calg_H(\tfrac{s-r}{k_n\del}+\ell)
		\end{equation}
		for all $\ell\in\R$. Analogously to the arguments between \eqref{eq:R22} and \eqref{eq:rho22}, it suffices to consider, instead of $R^{n,\ell_1,k_n,\ell_2,k'_n}_{31\mid 32,31\mid32}(t)$, the simpler terms
		\begin{equation}\label{eq:R3131}\begin{split}
				\wt	R^{n,\ell_1,k_n,\ell_2,k'_n}_{31,31}(t)	&= 4(k_n\del)^{-1/2-H}(k'_n\del)^{-1/2-H}\del^{-(1-\kappa)}\sum_{j=1}^{[t/\del]} \int_{(j-1)\del}^{j\del} \si_{(j-1)\del}^4\lvert\bta_{(j-1)\del}\rvert^2    \\
				&\quad\times\begin{multlined}[t][0.75\textwidth] \int_{s-k_n\del^{1-\eps}}^{(j-1)\del} (\delta_1^4 \calg_H(\tfrac{s-r}{k_n\del}+\ell_1)+\delta_1^4 \calg_H(\tfrac{s-r}{k_n\del}-\ell_1))\\
					\times (\delta_1^4 \calg_H(\tfrac{s-r}{k'_n\del}+\ell_2)+\delta_1^4 \calg_H(\tfrac{s-r}{k'_n\del}-\ell_2)) dr  (s-(j-1)\del)ds\end{multlined}
			\end{split}\raisetag{-3.5\baselineskip}\end{equation}
		and 
		\begin{equation}\label{eq:R3232}\begin{split}
				&\wt R^{n,\ell_1,k_n,\ell_2,k'_n}_{32,32}(t)	= 4(k_n\del)^{-1/2-H}(k'_n\del)^{-1/2-H}\del^{-(1-\kappa)}\sum_{j=1}^{[t/\del]} \int_{(j-1)\del}^{j\del} \si_{(j-1)\del}^4\lvert\bta_{(j-1)\del}\rvert^2    \\
				&\qquad\qquad\times \int_{r-((\ell_1+2)k_n\wedge(\ell_2+2) k'_n)\del}^{(j-1)\del} \delta_1^4 \calg_H(\ell_1-\tfrac{r-s}{k_n\del}) \delta_1^4 \calg_H(\ell_2-\tfrac{r-s}{k'_n\del})  (s-[s/\del]\del)ds dr.  
			\end{split}\raisetag{-2.75\baselineskip}\end{equation}
		In \eqref{eq:R3131},
		changing $(s-r)/\del^{1-\kappa}$ to $u$ and $s-(j-1)\del$ to $v$, we obtain
		\begin{align*}
			\wt	R^{n,\ell_1,k_n,\ell_2,k'_n}_{31,31}(t)		& =4 (k_n\del)^{-1/2-H}(k'_n\del)^{-1/2-H} \sum_{j=1}^{[t/\del]} \int_{0}^{\del} \si_{(j-1)\del}^4\lvert\bta_{(j-1)\del}\rvert^2    \\
			&\quad\times \begin{multlined}[t][0.75\textwidth]\int_{\frac{v}{\del^{1-\kappa}}}^{k_n\del^{\kappa-\eps}} (\delta_1^4 \calg_H(u\tfrac{\del^{-\kappa}}{k_n}+\ell_1)+\delta_1^4 \calg_H(u\tfrac{\del^{-\kappa}}{k_n}-\ell_1)) \\
				\times (\delta_1^4 \calg_H(u\tfrac{\del^{-\kappa}}{k'_n}+\ell_2)+\delta_1^4 \calg_H(u\tfrac{\del^{-\kappa}}{k'_n}-\ell_2)) du  \cdot vdv\end{multlined}\\
			&\sim \rho_{31,31}^{\ell_1,\theta_1,\ell_2,\theta_2} \Ga_3(t),
		\end{align*}
		where
		\begin{equation}\label{eq:rho3131}\begin{multlined}[0.9\textwidth]
				\rho_{31,31}^{\ell_1,\theta_1,\ell_2,\theta_2}	= \frac{2}{(\theta_1\theta_2)^{1/2+H}} \int_0^\infty  (\delta_1^4 \calg_H(u/\theta_1+\ell_1)+\delta_1^4 \calg_H(u/\theta_1-\ell_1)) \\
				\times(\delta_1^4 \calg_H(u/\theta_2+\ell_2)+\delta_1^4 \calg_H(u/\theta_2-\ell_2)) du.
		\end{multlined}\end{equation}
		Similarly, changing $(r-s)/\del^{1-\kappa}$ to $u$, we derive
		\begin{align*}
			\wt	R^{n,\ell_1,k_n,\ell_2,k'_n}_{32,32}(t)		& =4 (k_n\del)^{-1/2-H}(k'_n\del)^{-1/2-H} \sum_{j=1}^{[t/\del]} \int_{(j-1)\del}^{j\del} \si_{(j-1)\del}^4\lvert\bta_{(j-1)\del}\rvert^2    \\
			&\quad\times \begin{multlined}[t][0.75\textwidth]\int_{\frac{r-(j-1)\del}{\del^{1-\kappa}}}^{\frac{(\ell_1+2)k_n\wedge(\ell_2+2) k'_n}{\del^{-\kappa}}} (\delta_1^4 \calg_H(\ell_1-u\tfrac{\del^{-\kappa}}{k_n})+\delta_1^4 \calg_H(\ell_2-u\tfrac{\del^{-\kappa}}{k'_n}))\\
				\times (r-u\del^{1-\kappa}-[r/\del-u\del^{-\kappa}]\del) du  dr\end{multlined}\\
			&\sim 4 (k_n\del)^{-1/2-H}(k'_n\del)^{-1/2-H} \sum_{j=1}^{[t/\del]}  \si_{(j-1)\del}^4\lvert\bta_{(j-1)\del}\rvert^2    \\
			&\quad\times \begin{multlined}[t][0.75\textwidth]\int_{0}^{\frac{(\ell_1+2)k_n\wedge(\ell_2+2) k'_n}{\del^{-\kappa}}} (\delta_1^4 \calg_H(\ell_1-u\tfrac{\del^{-\kappa}}{k_n})+\delta_1^4 \calg_H(\ell_2-u\tfrac{\del^{-\kappa}}{k'_n}))\\
				\times \int_{(j-1)\del}^{j\del}(r-u\del^{1-\kappa}-[r/\del-u\del^{-\kappa}]\del)   drdu.\end{multlined}
		\end{align*}
		Note that the last $dr$-integral equals $\int_0^{\del} r dr= \frac12\del^2$, so that 
		\[ 	\wt R^{n,\ell_1,k_n,\ell_2,k'_n}_{32,32}(t)	\sim \rho_{32,32}^{\ell_1,\theta_1,\ell_2,\theta_2} \int_0^t \si^4_s\lvert \bta_s\rvert^2 ds, \]
		where
		\begin{equation}\label{eq:rho3232} 
			\rho_{32,32}^{\ell_1,\theta_1,\ell_2,\theta_2}	= \frac{2}{(\theta_1\theta_2)^{1/2+H}} \int_0^{(\ell_1+2)\theta_1\wedge(\ell_2+2)\theta_2}  \delta_1^4 \calg_H(\ell_1-u/\theta_1)\delta_1^4 \calg_H(\ell_2-u/\theta_2) du.
		\end{equation}
		Using the fact that $\delta_1^4 \calg_H(t)=0$ for $t\leq -2$, we can extend the previous integral up to $+\infty$, which shows that
		\begin{multline*} \rho_{31,31}^{\ell_1,\theta_1,\ell_2,\theta_2}+\rho_{32,32}^{\ell_1,\theta_1,\ell_2,\theta_2}= \frac{2}{(\theta_1\theta_2)^{1/2+H}} \int_\R  (\delta_1^4 \calg_H(u/\theta_1+\ell_1)+\delta_1^4 \calg_H(u/\theta_1-\ell_1)) \\
			\times(\delta_1^4 \calg_H(u/\theta_2+\ell_2)+\delta_1^4 \calg_H(u/\theta_2-\ell_2)) du. \end{multline*}
		
		We want to show that this is exactly $\ga^{\ell_1,\theta_1,\ell_2,\theta_2}_3(H)$, which would then finish the proof of \eqref{eq:cov}.
		Switching to the Fourier domain, we use \eqref{eq:fml},  \eqref{eq:abs}, \eqref{eq:fml-prod} and \eqref{eq:gH} to obtain
		\begin{align*}
			&\int_\R  \delta_1^4 \calg_H(u/\theta_1+\ell_1)	\delta_1^4 \calg_H(u/\theta_2+\ell_2) du\\ &~=\frac{K_H^{-2}}{(\theta_1\theta_2)^{H+3/2}(H+\frac12)^2(H+\frac32)^2} \int_\R \delta_{\theta_1}^4 (u+\ell_1\theta_1)^{H+3/2}_+	\delta_{\theta_2}^4  (u+\ell_2\theta_2)^{H+3/2}_+ du \\
			&~=\begin{multlined}[t][0.95\textwidth]\frac{K_H^{-2}\Ga(H+\frac12)^2}{2\pi(\theta_1\theta_2)^{H+3/2}} \int_\R e^{i\xi(\ell_1\theta_1-\ell_2\theta_2)}e^{-i\pi(H+5/2)/2}(\xi-i0)^{-H-5/2}\\
				\times e^{i\pi(H+5/2)/2}(\xi+i0)^{-H-5/2}(e^{\frac12 i \theta_1\xi}-e^{-\frac12i\theta_1\xi})^4(e^{\frac12 i \theta_2\xi}-e^{-\frac12i\theta_2\xi})^4 d\xi\end{multlined}\\
			&~= \frac{\sin(\pi H)\Ga(2H+1)}{2\pi(\theta_1\theta_2)^{H+3/2}} \int_\R e^{i\xi(\ell_1\theta_1-\ell_2\theta_2)} \lvert \xi\rvert^{-2H-5}(e^{\frac12 i \theta_1\xi}-e^{-\frac12i\theta_1\xi})^4(e^{\frac12 i \theta_2\xi}-e^{-\frac12i\theta_2\xi})^4 d\xi\\
			&~=\frac{\sin(\pi H)\Ga(2H+1)\Ga(-2H-4)\cos(\pi(H+2))}{\pi}(\theta_1\theta_2)^{-H-3/2} \delta^4_{\theta_1}\delta^4_{\theta_2} \lvert \ell_2\theta_2-\ell_1\theta_1 \rvert^{2H+4}
		\end{align*}
		for $H\in(0,\frac12)$.
		The last fraction is equal to $-1/(32(H+\frac12)(H+1)(H+\frac32)(H+2))$, which shows that $	\rho_{31,31}^{\ell_1,\theta_1,\ell_2,\theta_2}+\rho_{32,32}^{\ell_1,\theta_1,\ell_2,\theta_2}=\ga^{\ell_1,\theta_1,\ell_2,\theta_2}_3(H)$
		for $H\in(0,\frac12)$. As before, the expression for $H=\frac12$ can be obtained by letting $H\to\frac12$, and since there is no singularity at $H=\frac12$ in the formula defining $\ga^{\ell_1,\theta_1,\ell_2,\theta_2}_3(H)$, it remains valid for $H=\frac12$.
	\end{proof}

	\begin{proof}[Proof of Equation~\eqref{eq:cov2}]
		Let us start by showing that
		\[ R^{n,\ell_1,k_n,\ell_2,k'_n}_{2,31|32}(t)=\sum_{j=1}^{[t/\del]} \E[  \zeta^{n,j,\ell_1,k_n}_2   \zeta^{n,j,\ell_2,k'_n}_{31|32} \mid \calf_{(j-1)\del}] \stackrel{\P}{\longrightarrow}0.\]
		By \eqref{eq:wtzeta2} and \eqref{eq:wtzeta3} and It\^o's isometry, we have 
		\begin{align*}
			R^{n,\ell_1,k_n,\ell_2,k'_n}_{2,31}(t)	&= -2\sum_{j=1}^{[t/\del]} (k'_n\del)^{-1/2-H}\del^{-(1-\kappa)} \int_{(j-1)\del}^{j\del} \si^2_{(j-1)\del}\eta^2_{(j-1)\del}\\
			&\quad\times\begin{multlined}[t][0.75\textwidth]\int_{s-k'_n\del^{1-\eps}}^{(j-1)\del} \int_0^1 \Bigl\{ \Delta^3_1 G_H(\tfrac{s-r}{k'_n\del} + \ell_2 -u-1)  \\
				+\Delta^3_1 G_H(\tfrac{s-r}{k'_n\del} - \ell_2 -u-1)\Bigr\}du \bta_{(j-1)\del} d\bW_r\end{multlined} \\
			&\quad\times\begin{multlined}[t][0.75\textwidth] \int_{(j-1)\del}^s\int_{-2}^\infty \Bigl\{\Delta^{2}_{1}G_H(v)\Delta^{2}_{1}G_H(v+\tfrac{s-w}{k_n\del}+\ell_1)  \\
				+ \Delta^2_1 G_H(v)\Delta^2_1G_H(v+\tfrac{s-w}{k_n\del}-\ell_1)\Bigr\} dv dw ds.\end{multlined}
		\end{align*}
		For each $j$, we know from the analysis of $R^{n,\ell_1,k_n,\ell_2,k'_n}_{22}$ and $R^{n,\ell_1,k_n,\ell_2,k'_n}_{31,31}$ that $\zeta^{n,j,\ell_1,k_n}_2$ and $\zeta^{n,j,\ell_2,k'_n}_{31}$ are of order $O_\P(\del^{1/2})$, uniformly in $i$. Therefore, we are free to modify terms in the previous display as long as it leads to an asymptotically vanishing error. For example, for any fixed $j$, we may replace $\si_{(j-1)\del}^2\eta^2_{(j-1)\del}$ and $\bta_{(j-1)\del}$ by $\si_{(j-1-k'_n\del^{-\eps})\del}^2\eta^2_{(j-1-k'_n\del^{-\eps})\del}$ and $\bta_{(j-1-k'_n\del^{-\eps})\del}$, respectively. Once we have done so, the resulting term, for fixed $j$, will be $\calf_{(j-1)\del}$-measurable with vanishing $\calf_{(j-1-k'_n\del^{-\eps})\del}$-conditional expectation. Thus, by a martingale size estimate (see \cite[Appendix~A]{CDL22}), the sum over $j$ will be of magnitude $O_\P(\del^{-(\kappa+\eps+1)/2} \del^{1/2}\del^{1/2}) =o_\P(1)$, proving $R^{n,\ell_1,k_n,\ell_2,k'_n}_{2,31}\approx 0$.
		
		The reasoning for $R^{n,\ell_1,k_n,\ell_2,k'_n}_{2,32}$ is similar. Again by It\^o's isometry,
		\begin{align*}
			R^{n,\ell_1,k_n,\ell_2,k'_n}_{2,32}(t)	&= -2\sum_{j=1}^{[t/\del]} (k'_n\del)^{-1/2-H}\del^{-(1-\kappa)} \int_{(j-1)\del}^{j\del} \si^2_{(j-1)\del}\lvert\bta_{(j-1)\del}\rvert^2 \\
			&\quad\times\int_{r-(\ell_2+1)k'_n\del}^{(j-1)\del} \int_0^1   \Delta^3_1 G_H(\ell_2-\tfrac{r-s}{k'_n\del} -u-1)    du (W_s-W_{[s/\del]\del})dW_s  \\
			&\quad\times \begin{multlined}[t][0.75\textwidth]\int_{r-k_n\del^{1-\eps}}^{(j-1)\del}\int_{-2}^\infty \Delta^{2}_{1}G_H(v)\Bigl\{\Delta^{2}_{1}G_H(v+\tfrac{r-w}{k_n\del}+\ell_1) \\
				+  \Delta^2_1G_H(v+\tfrac{r-w}{k_n\del}-\ell_1)\Bigr\} dv \bta_{(j-1)\del} d\bW_w dr.\end{multlined}
		\end{align*}
		We can now use integration by parts to expand the product of the $dW_s$-integral and the $d\bW_w$-integral. As in the analysis of $R^{n,\ell_1,k_n,\ell_2,k'_n}_{2,31}$ above, the martingale terms can be shown to be negligible. So only the quadratic variation part remains and 
		\begin{align*}
			&R^{n,\ell_1,k_n,\ell_2,k'_n}_{2,32}(t)	\approx -2\sum_{j=1}^{[t/\del]} (k'_n\del)^{-1/2-H}\del^{-(1-\kappa)} \int_{(j-1)\del}^{j\del} \si^2_{(j-1)\del}\lvert\bta_{(j-1)\del}\rvert^2 \eta_{(j-1)\del} \\
			&\quad\times\int_{r-(\ell_2+1)k'_n\del}^{(j-1)\del} \int_0^1   \Delta^3_1 G_H(\ell_2-\tfrac{r-s}{k'_n\del} -u-1)    du (W_s-W_{[s/\del]\del}) \\
			&\quad\times \int_{-2}^\infty \Delta^{2}_{1}G_H(v)\Bigl\{\Delta^{2}_{1}G_H(v+\tfrac{r-s}{k_n\del}+\ell_1)  +  \Delta^2_1G_H(v+\tfrac{r-s}{k_n\del}-\ell_1)\Bigr\} dv ds dr.
		\end{align*}
		Now we apply the same trick as before: we first shift the   index of $\si^2_{(j-1)\del}\lvert\bta_{(j-1)\del}\rvert^2 \eta_{(j-1)\del}$ to $(j-1-(\ell_2+1)k'_n)\del$ and then realize that the conditional expectation of the resulting expression given $\calf_{(j-1-(\ell_2+1)k'_n)\del}$ is zero. Thus, by another martingale size estimate, we obtain $R^{n,\ell_1,k_n,\ell_2,k'_n}_{2,32}\approx 0$. Since the proof of 
		\begin{eqnarray*}
			R^{n,\ell_1,k_n,\ell_2,k'_n}_{12\mid 1,31}(t)=\sum_{j=1}^{[t/\del]} \E[  \zeta^{n,j,\ell_1,k_n}_1   \zeta^{n,j,\ell_2,k'_n}_{2\mid 31} \mid \calf_{(j-1)\del}] \stackrel{\P}{\longrightarrow}0
		\end{eqnarray*}
		is very similar, we omit the details and leave it to the reader. Lastly, by It\^o's isometry, we have
		\begin{equation*}
			R^{n,\ell_1,k_n,\ell_2,k'_n}_{1,32}(t) =\sum_{j=1}^{[t/\del]} \E[  \zeta^{n,j,\ell_1,k_n}_1   \zeta^{n,j,\ell_2,k'_n}_{32} \mid \calf_{(j-1)\del}]\equiv 0. \qedhere
		\end{equation*}
	\end{proof}
	
\end{appendix}

\begin{acks}[Acknowledgments]
 The authors would like to thank the Associate Editor and two referees for their careful reading of the paper and for their constructive comments, which led to significant improvements of the paper. The authors would also like to thank Mikko Pakkanen for sharing the code from the paper \cite{BLP17}.
\end{acks}

\begin{funding}
	Yanghui Liu is supported by the PSC-CUNY Award 64353-00 52. Mathieu Rosenbaum and Gr\'egoire Szymanski gratefully acknowledge the financial support of the \'Ecole Polytechnique chairs {\it Deep Finance and Statistics} and {\it Machine Learning and Systematic Methods}. 
\end{funding}

\bibliographystyle{imsart-number2}
\bibliography{RoughVol.bib}

\begin{thebibliography}{50}

\bibitem{AE19}
\begin{barticle}[author]
\bauthor{\bsnm{Abi~Jaber},~\bfnm{Eduardo}\binits{E.}} \AND
  \bauthor{\bsnm{El~Euch},~\bfnm{Omar}\binits{O.}}
(\byear{2019}).
\btitle{Multifactor approximation of rough volatility models}.
\bjournal{SIAM J. Financial Math.}
\bvolume{10}
\bpages{309--349}.
\end{barticle}
\endbibitem

\bibitem{AFLWY17}
\begin{barticle}[author]
\bauthor{\bsnm{A\"{\i}t-Sahalia},~\bfnm{Yacine}\binits{Y.}},
  \bauthor{\bsnm{Fan},~\bfnm{Jianqing}\binits{J.}},
  \bauthor{\bsnm{Laeven},~\bfnm{Roger J.~A.}\binits{R.~J.~A.}},
  \bauthor{\bsnm{Wang},~\bfnm{Christina~Dan}\binits{C.~D.}} \AND
  \bauthor{\bsnm{Yang},~\bfnm{Xiye}\binits{X.}}
(\byear{2017}).
\btitle{Estimation of the continuous and discontinuous leverage effects}.
\bjournal{J. Amer. Statist. Assoc.}
\bvolume{112}
\bpages{1744--1758}.
\end{barticle}
\endbibitem

\bibitem{AitSahalia09}
\begin{barticle}[author]
\bauthor{\bsnm{A\"{\i}t-Sahalia},~\bfnm{Yacine}\binits{Y.}} \AND
  \bauthor{\bsnm{Jacod},~\bfnm{Jean}\binits{J.}}
(\byear{2009}).
\btitle{Testing for jumps in a discretely observed process}.
\bjournal{Ann. Statist.}
\bvolume{37}
\bpages{184--222}.
\end{barticle}
\endbibitem

\bibitem{AJ}
\begin{bbook}[author]
\bauthor{\bsnm{A{\"\i}t-Sahalia},~\bfnm{Yacine}\binits{Y.}} \AND
  \bauthor{\bsnm{Jacod},~\bfnm{Jean}\binits{J.}}
(\byear{2014}).
\btitle{High-Frequency Financial Econometrics}.
\bpublisher{Princeton University Press}
\end{bbook}
\endbibitem

\bibitem{ABD07}
\begin{barticle}[author]
\bauthor{\bsnm{Andersen},~\bfnm{Torben~G.}\binits{T.~G.}},
  \bauthor{\bsnm{Bollerslev},~\bfnm{Tim}\binits{T.}} \AND
  \bauthor{\bsnm{Diebold},~\bfnm{Francis~X.}\binits{F.~X.}}
(\byear{2007}).
\btitle{Roughing It Up: Including Jump Components in the Measurement, Modeling,
  and Forecasting of Return Volatility}.
\bjournal{Rev. Econ. Stat.}
\bvolume{89}
\bpages{701-720}.
\end{barticle}
\endbibitem

\bibitem{ABDL03}
\begin{barticle}[author]
\bauthor{\bsnm{Andersen},~\bfnm{Torben~G.}\binits{T.~G.}},
  \bauthor{\bsnm{Bollerslev},~\bfnm{Tim}\binits{T.}},
  \bauthor{\bsnm{Diebold},~\bfnm{Francis~X.}\binits{F.~X.}} \AND
  \bauthor{\bsnm{Labys},~\bfnm{Paul}\binits{P.}}
(\byear{2003}).
\btitle{Modeling and forecasting realized volatility}.
\bjournal{Econometrica}
\bvolume{71}
\bpages{579--625}.
\end{barticle}
\endbibitem

\bibitem{AFL13}
\begin{barticle}[author]
\bauthor{\bsnm{Aït-Sahalia},~\bfnm{Yacine}\binits{Y.}},
  \bauthor{\bsnm{Fan},~\bfnm{Jianqing}\binits{J.}} \AND
  \bauthor{\bsnm{Li},~\bfnm{Yingying}\binits{Y.}}
(\byear{2013}).
\btitle{The leverage effect puzzle: Disentangling sources of bias at high
  frequency}.
\bjournal{J. Financ. Econ.}
\bvolume{109}
\bpages{224-249}.
\end{barticle}
\endbibitem

\bibitem{BR12}
\begin{barticle}[author]
\bauthor{\bsnm{Bandi},~\bfnm{Federico~M.}\binits{F.~M.}} \AND
  \bauthor{\bsnm{Ren\`o},~\bfnm{Roberto}\binits{R.}}
(\byear{2012}).
\btitle{Time-varying leverage effects}.
\bjournal{J. Econometrics}
\bvolume{169}
\bpages{94--113}.
\end{barticle}
\endbibitem

\bibitem{BN11}
\begin{barticle}[author]
\bauthor{\bsnm{Barndorff-Nielsen},~\bfnm{Ole~E.}\binits{O.~E.}},
  \bauthor{\bsnm{Corcuera},~\bfnm{Jos\'{e}~Manuel}\binits{J.~M.}} \AND
  \bauthor{\bsnm{Podolskij},~\bfnm{Mark}\binits{M.}}
(\byear{2011}).
\btitle{Multipower variation for {B}rownian semistationary processes}.
\bjournal{Bernoulli}
\bvolume{17}
\bpages{1159--1194}.
\end{barticle}
\endbibitem

\bibitem{BN13}
\begin{bincollection}[author]
\bauthor{\bsnm{Barndorff-Nielsen},~\bfnm{Ole~E.}\binits{O.~E.}},
  \bauthor{\bsnm{Corcuera},~\bfnm{Jos\'{e}~Manuel}\binits{J.~M.}} \AND
  \bauthor{\bsnm{Podolskij},~\bfnm{Mark}\binits{M.}}
(\byear{2013}).
\btitle{Limit theorems for functionals of higher order differences of
  {B}rownian semi-stationary processes}.
In \bbooktitle{Prokhorov and contemporary probability theory}.
\bseries{Springer Proc. Math. Stat.}
\bvolume{33}
\bpages{69--96}.
\bpublisher{Springer, Heidelberg}
\end{bincollection}
\endbibitem

\bibitem{bayer2016pricing}
\begin{barticle}[author]
\bauthor{\bsnm{Bayer},~\bfnm{Christian}\binits{C.}},
  \bauthor{\bsnm{Friz},~\bfnm{Peter}\binits{P.}} \AND
  \bauthor{\bsnm{Gatheral},~\bfnm{Jim}\binits{J.}}
(\byear{2016}).
\btitle{Pricing under rough volatility}.
\bjournal{Quant. Finance}
\bvolume{16}
\bpages{887--904}.
\end{barticle}
\endbibitem

\bibitem{BLP17}
\begin{barticle}[author]
\bauthor{\bsnm{Bennedsen},~\bfnm{Mikkel}\binits{M.}},
  \bauthor{\bsnm{Lunde},~\bfnm{Asger}\binits{A.}} \AND
  \bauthor{\bsnm{Pakkanen},~\bfnm{Mikko~S.}\binits{M.~S.}}
(\byear{2017}).
\btitle{Hybrid scheme for {B}rownian semistationary processes}.
\bjournal{Finance Stoch.}
\bvolume{21}
\bpages{931--965}.
\end{barticle}
\endbibitem

\bibitem{bennedsen2016decoupling}
\begin{barticle}[author]
\bauthor{\bsnm{Bennedsen},~\bfnm{Mikkel}\binits{M.}},
  \bauthor{\bsnm{Lunde},~\bfnm{Asger}\binits{A.}} \AND
  \bauthor{\bsnm{Pakkanen},~\bfnm{Mikko~S.}\binits{M.~S.}}
(\byear{2022}).
\btitle{Decoupling the short-and long-term behavior of stochastic volatility}.
\bjournal{J. Financ. Econom.}
\bvolume{20}
\bpages{961--1006}.
\end{barticle}
\endbibitem

\bibitem{bolko2020GMM}
\begin{barticle}[author]
\bauthor{\bsnm{Bolko},~\bfnm{Anine~E.}\binits{A.~E.}},
  \bauthor{\bsnm{Christensen},~\bfnm{Kim}\binits{K.}},
  \bauthor{\bsnm{Pakkanen},~\bfnm{Mikko~S.}\binits{M.~S.}} \AND
  \bauthor{\bsnm{Veliyev},~\bfnm{Bezirgen}\binits{B.}}
(\byear{2023}).
\btitle{A {GMM} approach to estimate the roughness of stochastic volatility}.
\bjournal{J. Econometrics}
\bvolume{235}
\bpages{745--778}.
\end{barticle}
\endbibitem

\bibitem{BOLLERSLEV2016464}
\begin{barticle}[author]
\bauthor{\bsnm{Bollerslev},~\bfnm{Tim}\binits{T.}},
  \bauthor{\bsnm{Li},~\bfnm{Sophia~Zhengzi}\binits{S.~Z.}} \AND
  \bauthor{\bsnm{Todorov},~\bfnm{Viktor}\binits{V.}}
(\byear{2016}).
\btitle{Roughing up beta: Continuous versus discontinuous betas and the cross
  section of expected stock returns}.
\bjournal{J. Financ. Econ.}
\bvolume{120}
\bpages{464-490}.
\end{barticle}
\endbibitem

\bibitem{CDL22}
\begin{barticle}[author]
\bauthor{\bsnm{Chong},~\bfnm{Carsten}\binits{C.}},
  \bauthor{\bsnm{Delerue},~\bfnm{Thomas}\binits{T.}} \AND
  \bauthor{\bsnm{Li},~\bfnm{Guoying}\binits{G.}}
(\byear{2022}).
\btitle{When frictions are fractional: Rough noise in high-frequency data}.
\bjournal{arXiv:2106.16149}
\end{barticle}
\endbibitem

\bibitem{CDM22}
\begin{barticle}[author]
\bauthor{\bsnm{Chong},~\bfnm{Carsten}\binits{C.}},
  \bauthor{\bsnm{Delerue},~\bfnm{Thomas}\binits{T.}} \AND
  \bauthor{\bsnm{Mies},~\bfnm{Fabian}\binits{F.}}
(\byear{2022}).
\btitle{Rate-optimal estimation of mixed semimartingales}.
\bjournal{arXiv:2207.10464}
\end{barticle}
\endbibitem

\bibitem{Paris}
\begin{barticle}[author]
\bauthor{\bsnm{Chong},~\bfnm{Carsten}\binits{C.}},
  \bauthor{\bsnm{Hoffmann},~\bfnm{Marc}\binits{M.}},
  \bauthor{\bsnm{Liu},~\bfnm{Yanghui}\binits{Y.}},
  \bauthor{\bsnm{Rosenbaum},~\bfnm{Mathieu}\binits{M.}} \AND
  \bauthor{\bsnm{Szymanski},~\bfnm{Gr{\'e}goire}\binits{G.}}
(\byear{2022}).
\btitle{Statistical inference for rough volatility: {M}inimax theory}.
\bjournal{arXiv:2210.01214}
\end{barticle}
\endbibitem

\bibitem{chong2022short}
\begin{barticle}[author]
\bauthor{\bsnm{Chong},~\bfnm{Carsten}\binits{C.}} \AND
  \bauthor{\bsnm{Todorov},~\bfnm{Viktor}\binits{V.}}
(\byear{2022}).
\btitle{Short-time expansion of characteristic functions in a rough volatility
  setting with applications}.
\bjournal{arXiv preprint arXiv:2208.00830}
\end{barticle}
\endbibitem

\bibitem{CR98}
\begin{barticle}[author]
\bauthor{\bsnm{Comte},~\bfnm{Fabienne}\binits{F.}} \AND
  \bauthor{\bsnm{Renault},~\bfnm{Eric}\binits{E.}}
(\byear{1998}).
\btitle{Long memory in continuous-time stochastic volatility models}.
\bjournal{Math. Finance}
\bvolume{8}
\bpages{291--323}.
\end{barticle}
\endbibitem

\bibitem{CHPP13}
\begin{barticle}[author]
\bauthor{\bsnm{Corcuera},~\bfnm{Jos\'{e}~Manuel}\binits{J.~M.}},
  \bauthor{\bsnm{Hedevang},~\bfnm{Emil}\binits{E.}},
  \bauthor{\bsnm{Pakkanen},~\bfnm{Mikko~S.}\binits{M.~S.}} \AND
  \bauthor{\bsnm{Podolskij},~\bfnm{Mark}\binits{M.}}
(\byear{2013}).
\btitle{Asymptotic theory for {B}rownian semi-stationary processes with
  application to turbulence}.
\bjournal{Stochastic Process. Appl.}
\bvolume{123}
\bpages{2552--2574}.
\end{barticle}
\endbibitem

\bibitem{Corsi09}
\begin{barticle}[author]
\bauthor{\bsnm{Corsi},~\bfnm{Fulvio}\binits{F.}}
(\byear{2009}).
\btitle{{A Simple Approximate Long-Memory Model of Realized Volatility}}.
\bjournal{J. Financ. Econom.}
\bvolume{7}
\bpages{174-196}.
\end{barticle}
\endbibitem

\bibitem{Diebold01}
\begin{barticle}[author]
\bauthor{\bsnm{Diebold},~\bfnm{Francis~X.}\binits{F.~X.}} \AND
  \bauthor{\bsnm{Inoue},~\bfnm{Atsushi}\binits{A.}}
(\byear{2001}).
\btitle{Long memory and regime switching}.
\bjournal{J. Econometrics}
\bvolume{105}
\bpages{131--159}.
\end{barticle}
\endbibitem

\bibitem{el2018microstructural}
\begin{barticle}[author]
\bauthor{\bsnm{El~Euch},~\bfnm{Omar}\binits{O.}},
  \bauthor{\bsnm{Fukasawa},~\bfnm{Masaaki}\binits{M.}} \AND
  \bauthor{\bsnm{Rosenbaum},~\bfnm{Mathieu}\binits{M.}}
(\byear{2018}).
\btitle{The microstructural foundations of leverage effect and rough
  volatility}.
\bjournal{Finance Stoch.}
\bvolume{22}
\bpages{241--280}.
\end{barticle}
\endbibitem

\bibitem{el2019roughening}
\begin{barticle}[author]
\bauthor{\bsnm{El~Euch},~\bfnm{Omar}\binits{O.}},
  \bauthor{\bsnm{Gatheral},~\bfnm{Jim}\binits{J.}} \AND
  \bauthor{\bsnm{Rosenbaum},~\bfnm{Mathieu}\binits{M.}}
(\byear{2019}).
\btitle{Roughening {H}eston}.
\bjournal{Risk}
\bpages{84--89}.
\end{barticle}
\endbibitem

\bibitem{ER19}
\begin{barticle}[author]
\bauthor{\bsnm{El~Euch},~\bfnm{Omar}\binits{O.}} \AND
  \bauthor{\bsnm{Rosenbaum},~\bfnm{Mathieu}\binits{M.}}
(\byear{2019}).
\btitle{The characteristic function of rough {H}eston models}.
\bjournal{Math. Finance}
\bvolume{29}
\bpages{3--38}.
\end{barticle}
\endbibitem

\bibitem{F21}
\begin{barticle}[author]
\bauthor{\bsnm{Fukasawa},~\bfnm{Masaaki}\binits{M.}}
(\byear{2021}).
\btitle{Volatility has to be rough}.
\bjournal{Quant. Finance}
\bvolume{21}
\bpages{1--8}.
\end{barticle}
\endbibitem

\bibitem{FTW21}
\begin{barticle}[author]
\bauthor{\bsnm{Fukasawa},~\bfnm{Masaaki}\binits{M.}},
  \bauthor{\bsnm{Takabatake},~\bfnm{Tetsuya}\binits{T.}} \AND
  \bauthor{\bsnm{Westphal},~\bfnm{Rebecca}\binits{R.}}
(\byear{2022}).
\btitle{Consistent estimation for fractional stochastic volatility model under
  high-frequency asymptotics}.
\bjournal{Math. Finance}
\bvolume{32}
\bpages{1086--1132}.
\end{barticle}
\endbibitem

\bibitem{GJR}
\begin{barticle}[author]
\bauthor{\bsnm{Gatheral},~\bfnm{Jim}\binits{J.}},
  \bauthor{\bsnm{Jaisson},~\bfnm{Thibault}\binits{T.}} \AND
  \bauthor{\bsnm{Rosenbaum},~\bfnm{Mathieu}\binits{M.}}
(\byear{2018}).
\btitle{Volatility is rough}.
\bjournal{Quant. Finance}
\bvolume{18}
\bpages{933--949}.
\end{barticle}
\endbibitem

\bibitem{gatheral2020quadratic}
\begin{barticle}[author]
\bauthor{\bsnm{Gatheral},~\bfnm{Jim}\binits{J.}},
  \bauthor{\bsnm{Jusselin},~\bfnm{Paul}\binits{P.}} \AND
  \bauthor{\bsnm{Rosenbaum},~\bfnm{Mathieu}\binits{M.}}
(\byear{2020}).
\btitle{The quadratic rough Heston model and the joint S\&P 500/VIX smile
  calibration problem}.
\bjournal{Risk}
\bpages{6 pp.}
\end{barticle}
\endbibitem

\bibitem{Gloter00}
\begin{barticle}[author]
\bauthor{\bsnm{Gloter},~\bfnm{Arnaud}\binits{A.}}
(\byear{2000}).
\btitle{Estimation du coefficient de diffusion de la volatilit\'{e} d'un
  mod\`ele \`a volatilit\'{e} stochastique}.
\bjournal{C. R. Acad. Sci. Paris S\'{e}r. I Math.}
\bvolume{330}
\bpages{243--248}.
\end{barticle}
\endbibitem

\bibitem{GH07}
\begin{barticle}[author]
\bauthor{\bsnm{Gloter},~\bfnm{Arnaud}\binits{A.}} \AND
  \bauthor{\bsnm{Hoffmann},~\bfnm{Marc}\binits{M.}}
(\byear{2007}).
\btitle{Estimation of the {H}urst parameter from discrete noisy data}.
\bjournal{Ann. Statist.}
\bvolume{35}
\bpages{1947--1974}.
\end{barticle}
\endbibitem

\bibitem{H90}
\begin{bbook}[author]
\bauthor{\bsnm{H\"{o}rmander},~\bfnm{Lars}\binits{L.}}
(\byear{1990}).
\btitle{The Analysis of Linear Partial Differential Operators. {I}},
\bedition{second} ed.
\bseries{Grundlehren der mathematischen Wissenschaften [Fundamental Principles
  of Mathematical Sciences]}
\bvolume{256}.
\bpublisher{Springer-Verlag, Berlin}
\bnote{Distribution theory and Fourier analysis}.
\end{bbook}
\endbibitem

\bibitem{IL97}
\begin{barticle}[author]
\bauthor{\bsnm{Istas},~\bfnm{Jacques}\binits{J.}} \AND
  \bauthor{\bsnm{Lang},~\bfnm{Gabriel}\binits{G.}}
(\byear{1997}).
\btitle{Quadratic variations and estimation of the local {H}\"{o}lder index of
  a {G}aussian process}.
\bjournal{Ann. Inst. H. Poincar\'{e} Probab. Statist.}
\bvolume{33}
\bpages{407--436}.
\end{barticle}
\endbibitem

\bibitem{JP}
\begin{bbook}[author]
\bauthor{\bsnm{Jacod},~\bfnm{Jean}\binits{J.}} \AND
  \bauthor{\bsnm{Protter},~\bfnm{Philip}\binits{P.}}
(\byear{2012}).
\btitle{Discretization of Processes}.
\bseries{Stochastic Modelling and Applied Probability}
\bvolume{67}.
\bpublisher{Springer, Heidelberg}
\end{bbook}
\endbibitem

\bibitem{jusselin2020noarbitrage}
\begin{barticle}[author]
\bauthor{\bsnm{Jusselin},~\bfnm{Paul}\binits{P.}} \AND
  \bauthor{\bsnm{Rosenbaum},~\bfnm{Mathieu}\binits{M.}}
(\byear{2020}).
\btitle{No-arbitrage implies power-law market impact and rough volatility}.
\bjournal{Math. Finance}
\bvolume{30}
\bpages{1309--1336}.
\end{barticle}
\endbibitem

\bibitem{KX17}
\begin{barticle}[author]
\bauthor{\bsnm{Kalnina},~\bfnm{Ilze}\binits{I.}} \AND
  \bauthor{\bsnm{Xiu},~\bfnm{Dacheng}\binits{D.}}
(\byear{2017}).
\btitle{Nonparametric estimation of the leverage effect: a trade-off between
  robustness and efficiency}.
\bjournal{J. Amer. Statist. Assoc.}
\bvolume{112}
\bpages{384--396}.
\end{barticle}
\endbibitem

\bibitem{Lampret01}
\begin{barticle}[author]
\bauthor{\bsnm{Lampret},~\bfnm{Vito}\binits{V.}}
(\byear{2001}).
\btitle{The {E}uler-{M}aclaurin and {T}aylor formulas: {T}win, elementary
  derivations}.
\bjournal{Math. Mag.}
\bvolume{74}
\bpages{109--122}.
\end{barticle}
\endbibitem

\bibitem{LLZ22}
\begin{barticle}[author]
\bauthor{\bsnm{Li},~\bfnm{Yingying}\binits{Y.}},
  \bauthor{\bsnm{Liu},~\bfnm{Guangying}\binits{G.}} \AND
  \bauthor{\bsnm{Zhang},~\bfnm{Zhiyuan}\binits{Z.}}
(\byear{2022}).
\btitle{Volatility of volatility: estimation and tests based on noisy high
  frequency data with jumps}.
\bjournal{J. Econometrics}
\bvolume{229}
\bpages{422--451}.
\end{barticle}
\endbibitem

\bibitem{LT20}
\begin{barticle}[author]
\bauthor{\bsnm{Liu},~\bfnm{Yanghui}\binits{Y.}} \AND
  \bauthor{\bsnm{Tindel},~\bfnm{Samy}\binits{S.}}
(\byear{2020}).
\btitle{Discrete rough paths and limit theorems}.
\bjournal{Ann. Inst. Henri Poincar\'{e} Probab. Stat.}
\bvolume{56}
\bpages{1730--1774}.
\end{barticle}
\endbibitem

\bibitem{LMPR18}
\begin{barticle}[author]
\bauthor{\bsnm{Livieri},~\bfnm{Giulia}\binits{G.}},
  \bauthor{\bsnm{Mouti},~\bfnm{Saad}\binits{S.}},
  \bauthor{\bsnm{Pallavicini},~\bfnm{Andrea}\binits{A.}} \AND
  \bauthor{\bsnm{Rosenbaum},~\bfnm{Mathieu}\binits{M.}}
(\byear{2018}).
\btitle{Rough volatility: {E}vidence from option prices}.
\bjournal{IISE Trans.}
\bvolume{50}
\bpages{767--776}.
\end{barticle}
\endbibitem

\bibitem{NNT10}
\begin{barticle}[author]
\bauthor{\bsnm{Nourdin},~\bfnm{Ivan}\binits{I.}},
  \bauthor{\bsnm{Nualart},~\bfnm{David}\binits{D.}} \AND
  \bauthor{\bsnm{Tudor},~\bfnm{Ciprian~A.}\binits{C.~A.}}
(\byear{2010}).
\btitle{Central and non-central limit theorems for weighted power variations of
  fractional {B}rownian motion}.
\bjournal{Ann. Inst. Henri Poincar\'{e} Probab. Stat.}
\bvolume{46}
\bpages{1055--1079}.
\end{barticle}
\endbibitem

\bibitem{RS04}
\begin{barticle}[author]
\bauthor{\bsnm{Ra\v{c}kauskas},~\bfnm{A.}\binits{A.}} \AND
  \bauthor{\bsnm{Suquet},~\bfnm{C.}\binits{C.}}
(\byear{2004}).
\btitle{Central limit theorems in {H}\"{o}lder topologies for {B}anach space
  valued random fields}.
\bjournal{Teor. Veroyatn. Primen.}
\bvolume{49}
\bpages{109--125}.
\end{barticle}
\endbibitem

\bibitem{SLY21}
\begin{barticle}[author]
\bauthor{\bsnm{Shi},~\bfnm{Shuping}\binits{S.}},
  \bauthor{\bsnm{Liu},~\bfnm{Xiaobin}\binits{X.}} \AND
  \bauthor{\bsnm{Yu},~\bfnm{Jun}\binits{J.}}
(\byear{2021}).
\btitle{Fractional stochastic volatility model}.
\bjournal{SSRN preprint}
\end{barticle}
\endbibitem

\bibitem{SY22}
\begin{barticle}[author]
\bauthor{\bsnm{Shi},~\bfnm{Shuping}\binits{S.}} \AND
  \bauthor{\bsnm{Yu},~\bfnm{Jun}\binits{J.}}
(\byear{2023}).
\btitle{Volatility puzzle: {L}ong memory or anti-persistency}.
\bjournal{Manage. Sci.}
\bvolume{69}
\bpages{3759--4361}.
\end{barticle}
\endbibitem

\bibitem{Szymanski22}
\begin{barticle}[author]
\bauthor{\bsnm{Szymanski},~\bfnm{Gr{\'e}goire}\binits{G.}}
(\byear{2022}).
\btitle{Optimal estimation of the rough {H}urst parameter in additive noise}.
\bjournal{arXiv:2205.13035}
\end{barticle}
\endbibitem

\bibitem{Vetter12}
\begin{barticle}[author]
\bauthor{\bsnm{Vetter},~\bfnm{Mathias}\binits{M.}}
(\byear{2012}).
\btitle{Estimation of correlation for continuous semimartingales}.
\bjournal{Scand. J. Stat.}
\bvolume{39}
\bpages{757--771}.
\end{barticle}
\endbibitem

\bibitem{Vetter15}
\begin{barticle}[author]
\bauthor{\bsnm{Vetter},~\bfnm{Mathias}\binits{M.}}
(\byear{2015}).
\btitle{Estimation of integrated volatility of volatility with applications to
  goodness-of-fit testing}.
\bjournal{Bernoulli}
\bvolume{21}
\bpages{2393--2418}.
\end{barticle}
\endbibitem

\bibitem{WM14}
\begin{barticle}[author]
\bauthor{\bsnm{Wang},~\bfnm{Christina~D.}\binits{C.~D.}} \AND
  \bauthor{\bsnm{Mykland},~\bfnm{Per~A.}\binits{P.~A.}}
(\byear{2014}).
\btitle{The estimation of leverage effect with high-frequency data}.
\bjournal{J. Amer. Statist. Assoc.}
\bvolume{109}
\bpages{197--215}.
\end{barticle}
\endbibitem

\bibitem{WANG2021}
\begin{barticle}[author]
\bauthor{\bsnm{Wang},~\bfnm{Xiaohu}\binits{X.}},
  \bauthor{\bsnm{Xiao},~\bfnm{Weilin}\binits{W.}} \AND
  \bauthor{\bsnm{Yu},~\bfnm{Jun}\binits{J.}}
(\byear{2023}).
\btitle{Modeling and forecasting realized volatility with the fractional
  {O}rnstein-{U}hlenbeck process}.
\bjournal{J. Econometrics}
\bvolume{232}
\bpages{389--415}.
\end{barticle}
\endbibitem

\end{thebibliography}

\end{document}